\documentclass[a4paper,12pt]{article}
\usepackage{amsthm}
\usepackage{amsmath,amsfonts,amssymb}
\usepackage{a4wide}
\usepackage[usenames,dvipsnames]{color}
\usepackage[all,cmtip]{xy}
\usepackage[verbose,colorlinks=true,linktocpage=true,linkcolor=blue,citecolor=blue]{hyperref}
\usepackage{footnote}
\usepackage{tikz}
\usepackage{extarrows}
\usepackage[title]{appendix}
\usepackage{marvosym}

\usepackage{indentfirst} 
\usepackage{geometry}
\newtheorem{theo}{{Theorem}}[section]
\newtheorem{lemm}[theo]{Lemma}
\newtheorem{rema}[theo]{Remark}
\newtheorem{defi}[theo]{Definition}
\newtheorem{coro}[theo]{Corollary}
\newtheorem{prop}[theo]{Proposition}

\numberwithin{equation}{section}

\hypersetup{
    colorlinks=true, 
    linkcolor=blue, 
    urlcolor=blue, 
}

\allowdisplaybreaks[1]

\newcommand{\hongda}[1]{\textcolor{blue}{#1}}

\begin{document}

\title{ Isomorphism between the R-matrix and Drinfeld presentations of quantum affine superalgebra for type $\boldsymbol{A}$}

\author{ Pengfa Xu${}^1$, Hongda Lin${}^2$ and Honglian Zhang${}^{1,}$\thanks{Corresponding Author.~~Email:~hlzhangmath@shu.edu.cn}}
\maketitle

\begin{center}
\footnotesize
\begin{itemize}
\item[1] Department of Mathematics, Shanghai University, Shanghai 200444, PR~China.
\item[2] Shenzhen International Center for Mathematics, Southern University of Science and Technology, Shenzhen 518055, PR~China. 

\end{itemize}
\end{center}

\begin{abstract}
 In this paper, we establish an explicit isomorphism between Drinfeld and R-matrix presentations of the quantum affine superalgebra associated with the general linear Lie superalgebra $\mathfrak{gl}(m|n)$. This result can be viewed as a supersymmetric analogue of the isomorphism between two presentations of the quantum affine algebra $U_{q}(\widehat{\mathfrak{gl}(n)})$ proposed by Frenkel-Ding. Furthermore, employing the quantum Berezinian of $U_{q}(\widehat{\mathfrak{gl}(m|n))}$, we construct the R-matrix presentation of the quantum affine superalgebra $U_{q}(\widehat{\mathfrak{sl}(m|n))}$ and prove the isomorphism between its R-matrix and Drinfeld presentations. Our proof based on the commutative relation between Gaussian generators, $\mathbb{A}$-form and quantum Berezinian. \

\noindent{\textbf{Keywords:}}
Quantum affine superalgebra; R-matrix presentation; Drinfeld presentation; $\mathbb{A}$-form; quantum Berezinian.
\end{abstract}

\section{Introduction}
The \textit{quantum affine algebra} $U_q(\widehat{\mathfrak{g}}) $ is a quantum deformation of the universal enveloping algebra associated with an affine Kac-Moody algebra $\widehat{\mathfrak{g}}$. It is well-known that $U_q(\widehat{\mathfrak{g}})$ admits three different presentations. The original definition was independently introduced by Drinfeld \cite{Dr85} and Jimbo \cite{Ji85} in terms of Chevalley generators and $q$-Serre relations, a formulation now refered to as the \textit{Drinfeld-Jimbo presentation}. Drinfeld \cite{Dr87} also gave another presentation of the quantum affine algebra in terms of current generators in 1987, which is commonly known as \textit{Drinfeld (new) presentation}. Subsequently, the \textit{$R$-matrix presentation} of the quantum affine algebra was proposed by Reshetikhin and Semenov-Tian-Shansky \cite{RS90} and further developed by I. Frenkel and Reshetikhin \cite{FR92}. The defining relations of this presentation are written in the form of $RLL$ relations associated with a trigometrix $R$-matrix.

Considerable advances has been made in investigating isomorphisms among different presentations of quantum affine algebras. Beck's pioneering work \cite{Be94} proved the isomorphism between the Drinfeld–Jimbo and Drinfeld presentations for untwisted algebras. This work was subsequently extended to the twisted case by Jing and Zhang \cite{JZ07,JZ16}. In a parallel development, Damiani \cite{Da12,Da15} provided an alternative construction of this isomorphism using methods distinct from Beck's. In 1993, Frenkel and Ding \cite{DF93} constructed the isomorphism between the Drinfeld and $R$-matrix presentations in the case of simple Lie algebra $\mathfrak{g}$ of type $\boldsymbol{A}$. Following the approach of Ding and Frenkel, Jing, Liu and Molev \cite{JLM20-1,JLM20-2} generalized this result to the Lie algebras $\mathfrak{g}$ of types $\boldsymbol{B}$, $\boldsymbol{C}$ and $\boldsymbol{D}$, with a similar result for Yangians \cite{JLM18}. These developments have established the isomorphism among these presentations,  providing multiple perspectives for studying the representation theory of quantum affine algebras and enriching the framework for understanding these algebraic structures.

To incorporate a natural $\mathbb{Z}_2$-grading, quantum affine superalgebras are introduced as an extension of quantum affine algebras through the inclusion of additional odd generators. In \cite{Ya99}, Yamane introduced the Drinfeld-Jimbo presentations of quantum affine superalgebras together with the complete Serre relations by deforming the universal enveloping algebras of affine Lie superalgebras of types $\boldsymbol{A}$--$\boldsymbol{G}$. Adopting Beck’s approach \cite{Be94}, the Drinfeld presentation of quantum affine superalgebras were derived for type $\boldsymbol{A}$ \cite{Ya99}, types $\boldsymbol{BCD}$ \cite{BFK25} and exceptional type $\boldsymbol{D}(2,1;x)$ \cite{HSTY08}. However, these studies only provided the surjective homomorphism from Drinfeld to Drinfeld-Jimbo presentations except for $\boldsymbol{D}(2,1;x)$. Until recently, Lin, Yamane and Zhang in \cite{LYZ24} demonstrated this isomorphism for type $\boldsymbol{A}$ whose Dynkin diagram does not contain two neighboring odd roots, while another paper \cite{WLZ25} presented for the case of type $\boldsymbol{B}$ at standard parities.

It is noteworthy that Zhang \cite{Zyz97}, and Fan, Hou and Shi \cite{FHS97} constructed the Drinfeld presentation of quantum affine superalgebras $U_q \bigl(\widehat{\mathfrak{gl}(m|n)}\bigr)$. However, these constructions are predicated on the assumption that the Frenkel-Ding isomorphism theorem holds in the super case and do not explicitly present the complete Serre relations. Furthermore, Zhang \cite{Zhf16} applied the $R$-matrix presentation of the quantum loop superalgebra associated with the Lie superalgebra $\mathfrak{gl}(m|n)$ to investigate its finite-dimensional representations and their tensor products. Zhang also pointed out that the $R$-matrix presentation is isomorphic to the Drinfeld presentation in the loop case, although the proof of injectivity was not provided. In 2021,  Tsymbaliuk \cite{Ts21} employed the shuffle algebra approach to provide a positive part of PBWD basis for type $\boldsymbol{A}$ quantum affine superalgebra. More recently, Hu, Jing and Zhong in \cite{HJZ24} provided the Drinfeld presentation of two-parameter quantum affine superalgebra $U_{p,q}(\widehat{\mathfrak{gl}(m|n)})$. In \cite{WLZ24}, Wu, Lin and Zhang established the isomorphism between the Drinfeld and $R$-matrix presentations of quantum affine superalgebra for type $\mathfrak{osp}(2m+1|2n)$. 

This paper aims to extend the Frenkel-Ding isomorphism theorem to the quantum affine superalgebras of type $\boldsymbol{A}$. We first prove the isomorphism between the $R$-matrix and Drinfeld presentations of the quantum affine superalgebra $U_{q}(\widehat{\mathfrak{gl}(m|n))}$, based on the Guass decomposition of the generator matrices in the $R$-matrix presentation. The argument proceeds in two main stages: first, we verify that the generators obtained via Gauss decomposition satisfy all relations in the Drinfeld presentation; second, we prove the injectivity of the induced homomorphism. Injectivity is shown first for the $\mathbb{A}$-forms of both two presentations. Then, since the homomorphism preserves the $\mathbb{A}$-form, its injectivity follows. Building on this, we use the quantum Berezinian of $U_{q}(\widehat{\mathfrak{gl}(m|n))}$ as defined by Jing, Li and Zhang \cite{JLZ25} to introduce the $R$-matrix presentation of $U_{q}(\widehat{\mathfrak{sl}(m|n))}$, and prove its isomorphism to the Drinfeld presentation. This completes the isomorphism between the $R$-matrix and the Drinfeld presentation of the quantum affine superalgebras of type $\boldsymbol{A}$. Our isomorphism, combined with the isomorphism between the Drinfeld–Jimbo and Drinfeld presentations established in \cite{LYZ24}, establishes the complete equivalence among the three presentations of type $\boldsymbol{A}$ quantum affine superalgebras at standard 01-sequences.

This paper is organized as follows. In Section 2, we recall the Drinfeld and R-matrix presentations of quantum affine superalgebra $U_q(\widehat{\mathfrak{gl}(m|n)})$ and formulate our main theorem. We subsequently prove that the map established in our theorem forms a isomorphism between these two presentations in Section 3. Finally, in Section 4, we introduce the $R$-matrix presentation of $U_{q}(\widehat{\mathfrak{sl}(m|n))}$, and establish its isomorphism to the Drinfeld presentation.

Throughout this paper, we adopt the conventions that $\mathbb{C}$ denotes the set of complex numbers, $\mathbb{C}^*$ represents the set of nonzero complex numbers, $\mathbb{N}$ corresponds to the set of non-negative integers, $\mathbb{Z}$ denotes the set of integers, $\mathbb{Z}_+$ signifies the set of positive integers, and $\mathbb{Z}^*$ represents the set of nonzero integers. We write $\mathbb{Z}_2=\mathbb{Z}/2\mathbb{Z}=\{\bar{0},\bar{1}\}$.

\section{Quantum affine superalgebra}

In this section, we summarize some notations and known facts that will be used throughout the paper and review the Drinfeld and $R$-matrix presentations of the quantum affine superalgebra associated with the general linear Lie algebra $\mathfrak{gl}(m|n)$. More details can be found in \cite{BCFK22,DF93,JLZ25,Ka77,LWZ24,LZ25,Mu12,VdL89,Ya99,Zhf14,Zyz97}. 

\subsection{Notations and Setup}

A \textit{superspace} is a $\mathbb{Z}_2$-graded vector space $V=V_{\bar{0}} \oplus V_{\bar{1}}$ over $\mathbb{C}$. Elements of $V_{\bar{0}}$ are referred to as even, and those of $\emph{V}_{\bar{1}}$ as odd. An element $x\in V$ is said to be \textit{homogeneous} if $x$ is either even or odd. The parity $|\cdot|$ of a homogeneous element $x\in V$ is defined by
\begin{equation*}
    |x|=\begin{cases}
        0, &\text{if}~~x\in V_{\bar{0}}; \\
        1, &\text{if}~~x\in V_{\bar{1}}.
    \end{cases}
\end{equation*}

An (associative) \textit{superalgebra} is a $\mathbb{Z}_2$-graded algebra $\mathfrak{A}=\mathfrak{A}_{\bar{0}}\oplus\mathfrak{A}_{\bar{1}}$ over $\mathbb{C}$. Then  $\mathfrak{A}$ is a superspace whose multiplication satisfies $xy\in \mathfrak{A}_{a+b}$ if $x\in \mathfrak{A}_a$, $y\in \mathfrak{A}_b$ for $a,b\in\mathbb{Z}_2$. Let $\mathfrak{A}_1,\mathfrak{A}_2$ be two superalgebras over $\mathbb{C}$. The tensor product $\mathfrak{A}_1\otimes \mathfrak{A}_2$ also forms a superalgebra subject to the multiplication rule defined by:
$$
(x_1 \otimes x_2)(y_1 \otimes y_2)=(-1)^{|x_2||y_1|}(x_1y_1 \otimes x_2y_2),
$$
for homogeneous elements $x_1,y_1\in\mathfrak{A}_1$, $x_2,y_2\in\mathfrak{A}_2$. 

A \textit{Lie superalgebra} is a superspace $\mathfrak{L}=\mathfrak{L}_{\bar{0}} \oplus \mathfrak{L}_{\bar{1}}$ over $\mathbb{C}$ equipped with a bilinear operation $[\,\cdot,\, \cdot\,]$, satisfying the following identities:
\begin{align*}
    &[x,\,y]=-(-1)^{|x||y|}[y,\,x],  &&(\text{skew-supersmmetric}) \\
    &(-1)^{|x||y|}[[x,\,y],\,z]+(-1)^{|y||z|}[[y,\,z],\,x]+(-1)^{|z||x|}[[z,\,x],\,y]=0,  &&(\text{super-Jacobi identity}) 
\end{align*}
where $x,y,z\in\mathfrak{L}$ are homogeneous elements. 

\vspace{1em}
Given the nonnegative integers $m,n$ such that $N=m+n\geqslant 2$. 
Define the set $I_{N}:=\{1,\cdots, N \}$, which we will simply denote by $I$. The parity of $i\in I$ is defined by
\begin{equation*}
    |i|=\begin{cases}
        0, &\text{if}~~1\leqslant i\leqslant m; \\
        1, &\text{if}~~m+1\leqslant i\leqslant N. 
    \end{cases}
\end{equation*}
Consider the superspace $\mathcal{V}:=\mathbb{C}^{N}$ with its standard basis $\{v_i\}_{i\in I}$, where $|v_i|=|i|$. The endomorphism ring $\operatorname{End}\mathcal{V}$ carries a natural structure of a superspace with the standard basis $E_{ij}$ of parity $|i|+|j|$ for $i,j \in I$.

Given a matrix $R=\sum_i r_{i}\otimes r^{i}\in \operatorname{End}\mathcal{V}^{\otimes 2}$ and an integer $t\geqslant 2$, we define the element $R_{ab}$ of $\operatorname{End}\mathcal{V}^{\otimes t}$ for $1\leqslant a<b\leqslant t$ by 
\begin{gather*}
    R_{ab}=\sum 1^{\otimes (a-1)}\otimes r_i\otimes 1^{\otimes (b-a-1)}\otimes r^i\otimes 1^{\otimes (t-b)}.
\end{gather*}
Let $P$ be the $\mathbb{Z}_2$-graded permutation operator on the tensor product $\mathcal{V}\otimes \mathcal{V}$ given by
$$
P=\sum_{i,j\in I}(-1)^{|j|}E_{ij}\otimes E_{ji}\in \operatorname{End}\mathcal{V}^{\otimes 2}.
$$
We then define $R_{ba}=P_{ab} R_{ab}P_{ab}$. 

Let $\mathfrak{A}$ be a superalgebra over $\mathbb{C}$. For a $\mathbb{Z}_2$-graded matrix 
$X=\sum_{i,j\in I} E_{ij}\otimes x_{ij}\in \operatorname{End}\mathcal{V}  \otimes\mathfrak{A},$
its supertranspose $X^{\rm st}$ is defined as
$$
X^{\rm st}=\sum_{i,j\in I} E_{ij}^{\rm st}\otimes x_{ij}\quad \text{with}\quad E_{ij}^{\rm st}=(-1)^{|i||j|+|j|}E_{ji}.
$$
For any $1\leqslant a\leqslant t$, we denote $X_a$ by  the matrix $X$ associated with the $a$-th copy of $\mathrm{End}\mathcal{V}$, which is given by 
$$X_a=\sum\limits_{i,j\in I}1^{\otimes (a-1)} \otimes E_{ij}\otimes 1^{\otimes (t-a)} \otimes x_{ij} \in \mathrm{End}\mathcal{V}^{\otimes t}\otimes \mathfrak{A}.$$ 

\vspace{0.5em}
The superspace $\operatorname{End}\mathcal{V}$, when endowed with the standard supercommutator: 
\begin{gather*}
    \left[E_{ij},\,E_{kl}\right]=\delta_{jk}E_{il}-\varepsilon_{ij;kl}E_{kj},\quad \text{for all}~~i,j,k,l\in I,
\end{gather*}
forms a Lie superalgebra structure, that is, \textit{general linear Lie superalgebra} $\mathfrak{gl}(m|n)$. Here, $\varepsilon_{ij;kl}=(-1)^{(|i|+|j|)(|k|+|l|)}$ for $i,j,k,l\in I$ and $\delta_{ij}$ denotes the Kronecker function.

The \textit{supertrace} map $\mathbf{str}$ is a $\mathbb{C}$-value operator acting on $\mathfrak{gl}(m|n)$, which is determined by extending $E_{ij}\mapsto (-1)^{|i|}\delta_{ij}$ for $i,j\in I$. Then the \textit{special linear Lie superalgebra} $\mathfrak{sl}(m|n)$ is defined as 
$$\mathfrak{sl}(m|n):=\{x\in\mathfrak{gl}(m|n)|\mathbf{str}(x)=0\}. $$
Let $\mathcal{I}=\sum_{i\in I}E_{ii}$ be the identity matrix. 
The Lie superalgebra $\mathfrak{sl}(m|n)$ is simple except that if $m=n$, it has a one-dimensional ideal $\langle \mathcal{I}\rangle$ spanned by scalar matrices $\lambda \mathcal{I}$, $\lambda \in\mathbb{C}$. 

Recall from \cite{Ka77} that the family $\boldsymbol{A}(m,n)$ of classical simple Lie superalgebras is given by:
\begin{align*}
\boldsymbol{A}(m-1,n-1)&=\mathfrak{sl}(m|n)\qquad~~\text{for}~~m\neq n,\quad m,n\geqslant 1;\\
\boldsymbol{A}(n-1,n-1)&=\mathfrak{sl}(n|n)/\langle \mathcal{I}\rangle := \mathfrak{psl}(n|n),\quad \text{for} ~~n>1.
\end{align*}
We say that a Lie superalgebra $\mathfrak{a}$ is of type $\boldsymbol{A}$ if $\mathfrak{a}=\mathfrak{gl}(m|n)$, $\mathfrak{sl}(m|n)$ or $\mathfrak{psl}(n|n)$. Let $\mathfrak{h}:=\text{Span}_{\mathbb{C}}\{h_i:=(-1)^{|i|}E_{ii}\ |\ i\in I\}$ be the \textit{Cartan sub-superalgebra} of $\mathfrak{gl}(m|n)$. Define the linear functions $\epsilon_i\in\mathfrak{h}^{\ast}$ ($i\in I$) given by $\epsilon_i(h_j)=(-1)^{|i|}\delta_{ij}$.
The parity of $\epsilon_i$ is $|\epsilon_i|=|i|$. 
Put $I'=I\setminus\{N\}$. For $i\in I'$, we denote 
$\alpha_i=\epsilon_i-\epsilon_{i+1}$  to be the \textit{simple roots} and $h_{\alpha_i}=h_i-h_{i+1}$ the \textit{simple coroots} of type $\boldsymbol{A}$. We call $\mathcal{Q}=\sum_{i\in I'}\mathbb{Z}\alpha_i$   the \textit{root lattice} and $\mathcal{Q}^{\vee}=\sum_{i\in I'}\mathbb{Z}h_{\alpha_i}$ the \textit{coroot lattice}. For any $\beta=\sum_{i\in I'}c_i\alpha_i\in \mathcal{Q}$, set $h_{\beta}=\sum_{i\in I'}c_ih_{\alpha_i}\in\mathcal{Q}^{\vee}$. We should point out that the set $\{h_{\alpha_i}|i\in I'\}$ is not linearly independent when $m=n$. 
The associated Cartan matrix is defined as the $(N-1)\times (N-1)$-matrix $A=\left(a_{i j}\right)_{i,j\in I'}$ with entries 
\begin{gather*}
    a_{i j}:=\alpha_j(h_{\alpha_i})=\left((-1)^{|i|}+(-1)^{|i+1|}\right) \delta_{i, j}-(-1)^{|i|}\delta_{i, j+1}-(-1)^{|i+1|}\delta_{i+1, j}.
\end{gather*}

\vspace{0.5em}
Let $\mathcal{L}\mathfrak{a}:=\mathfrak{a}\otimes\mathbb{C}[\lambda,\lambda^{-1}]$ be the loop superalgebra associated with the Lie superalgebra $\mathfrak{a}$ (of type $\boldsymbol{A}$). For $x\in\mathfrak{a}$ and $r\in\mathbb{Z}$, define $x^{(r)}=x\otimes \lambda^r$ whose parity is given by $|x^{(r)}|=|x|$. The affine Lie superalgebra $\widehat{\mathfrak{a}}$ of type $\boldsymbol{A}$ is the central extension $\mathcal{L}\mathfrak{gl}(m|n)\oplus\mathbb{C}K$ of the loop superalgebra (without derivation), which satisfies the super-bracket relations:
\begin{align*}
    &[K,\,\widehat{\mathfrak{a}}]=0, \\
    &[x^{(r)}+aK,\,y^{(s)}+bK]=[x,\,y]^{(r+s)}+r\delta_{r,-s}\kappa(x,y)K,~~~x,y\in \mathfrak{a},~r,s\in\mathbb{Z},~a,b\in\mathbb{C},
\end{align*}
where $\kappa$ is a $\mathbb{C}$-value invariant supersymmetric bilinear form on $\mathfrak{a}$. Let $\delta$ be the null root of $\widehat{\mathfrak{a}}$ satisfying $\delta(K)=\delta(\alpha_i^{\vee})=0$, $i\in I'$ and $|\delta|=0$. Set $\alpha_0=\delta+\epsilon_N-\epsilon_1$. Then $\{\alpha_i|i\in I'\cup\{0\}\}$ is the set of simple roots of $\widehat{\mathfrak{a}}$. 

Let $q$ be an indeterminate and set $q_{i}:=q^{(-1)^{|i|}}$. By convention, we use $\mathbb{C}(q)$ to denote the field of rational functions in $q$ over $\mathbb{C}$. For a nonzero $a\in\mathbb{C}(q)$, the $a$-graded commutator is defined as 
$$
\left[x,y\right]_{a}=xy-(-1)^{\mid y\mid\mid x\mid} ayx,\quad \text{for all homogeneous}~~x,y\in\widehat{\mathfrak{a}}. 
$$
In particular, setting $a=1$ recovers the standard supercommutator: $\left[x,\,y\right]_{1}=\left[x,\,y\right]$.

\subsection{Drinfeld presentation $U_q^{D}(\widehat{\mathfrak{gl}(m|n)})$}\label{se:Drinfeldpresentation}

In this subsection, we first review the Drinfeld realizations of the quantum affine superalgebra $U_q({\widehat{\mathfrak{gl}(m|n)}})$ and $U_q({\widehat{\mathfrak{sl}(m|n)}})$.
\begin{defi}\label{Drinfeld def}(\cite{Zyz97})\,
The superalgebra $U_q^{D}(\widehat{\mathfrak{gl}(m|n)})$ is defined as the unital associative superalgebra over $\mathbb{C}(q)$ generated by the central element $q^{\pm\frac{c}{2}}$ and the Drinfeld current generators $ X_{i,n}^{\pm}$, $K_{j,\pm r}^{\pm}$ for $i \in I^{'}, j\in I, n\in \mathbb{Z}$, satisfying relations in terms of the following generating functions in a formal variable $z$ :
\begin{gather*}
    X_{i}^{\pm}(z):=\sum_{k\in \mathbb{Z}}X_{i,k}^{\pm}z^{-k},\quad 
    K_{j}^{\pm}(z):=\sum_{r\in\mathbb{N}}K_{j,\pm r}^{\pm}z^{\mp r}.
\end{gather*}
The grading of the generators are: $|X_i^{ \pm}(z)|=|\alpha_i|=|i|+|i+1|$ and zero otherwise. The defining relations are given by
\begin{align}
K_{j,0}^{+}K_{j,0}^{-}&=K_{j,0}^{-}K_{j,0}^{+}=1,
\\
K_i^{ \pm}(z) K_j^{ \pm}(w)&=K_j^{ \pm}(w) K_i^{ \pm}(z),
\\
K_i^{+}(z) K_i^{-}(w)&=K_i^{-}(w) K_i^{+}(z), \quad i \leqslant m,
\\ \label{kiki}
\frac{w_{-} q-z_{+} q^{-1}}{z_{+} q-w_{-} q^{-1}} K_i^{+}(z)K_i^{-}(w)
&=\frac{w_{+} q-z_{-} q^{-1}}{z_{-} q-w_{+} q^{-1}} K_i^{-}(w) K_i^{+}(z), \quad m<i \leqslant m+n,
\\ \label{kikj}
\frac{z_{ \pm}-w_{\mp}}{z_{ \pm} q-w_{\mp} q^{-1}} K_i^{\mp}(w)^{-1} K_j^{ \pm}(z)
&=\frac{z_{\mp}-w_{ \pm}}{z_{\mp} q-w_{ \pm} q^{-1}} K_j^{ \pm}(z) K_i^{\mp}(w)^{-1}, \quad i>j, \\
\label{kjXi rel2}
K_j^{ \pm}(z)^{-1} X_i^{\epsilon}(w) K_j^{ \pm}(z)&=X_i^{\epsilon}(w), \quad j-i \leqslant-1,
\\ \label{KjXi rel2}
K_j^{ \pm}(z)^{-1} X_i^{\epsilon}(w) K_j^{ \pm}(z)&=X_i^{\epsilon}(w), \quad j-i \geqslant 2,
\\
K_i^{\pm}(z)^{\epsilon} X_i^{\epsilon}(w) K_i^{\pm}(z)^{-\epsilon}&=\frac{z_{\pm \epsilon} q_{i}-w q_{i}^{-1}}{z_{\pm \epsilon}-w} X_i^{\epsilon}(w), \quad i\neq m,
\\
K_{i+1}^{ \pm}(z)^{\epsilon} X_i^{\epsilon}(w) K_{i+1}^{\pm}(z)^{-\epsilon}&=\frac{z_{\pm \epsilon} q_{i}^{-1}-w q_{i}}{z_{\pm \epsilon}-w} X_i^{\epsilon}(w), \quad i\neq m,
\\  \label{kiXm rel1}
K_i^{ \pm}(z)^{\epsilon} X_m^{\epsilon}(w) K_i^{ \pm}(z)^{-\epsilon}&=\frac{z_{\pm \epsilon} q-w q^{-1}}{z_{\pm \epsilon}-w} X_m^{\epsilon}(w), \quad i=m, m+1,
\\ \label{com rels1 Xi}
\left(z q_{i}^{\pm 1}-w q_{i}^{\mp 1}\right) X_i^{\pm}(z) X_i^{\pm}(w)&=\left(z q_{i}^{ \mp 1}-w q_{i}^{\pm 1}\right) X_i^{\pm}(w) X_i^{\pm}(z), \quad i\neq m,
\\
\left[X_m^{ \pm}(z), X_m^{ \pm}(w)\right]&=0,
\\
\left(z-w\right)X_i^{+}(z) X_{i+1}^{+}(w)&=\left(z q_{i+1}-w q_{i+1}^{-1}\right) X_{i+1}^{+}(w) X_i^{+}(z), \quad i \leqslant m+n-1,
\\
\left(z q_{i+1}-w q_{i+1}^{-1}\right) X_i^{-}(z) X_{i+1}^{-}(w)&=(z-w) X_{i+1}^{-}(w) X_i^{-}(z), \quad i \leqslant m+n-1,
\\
 X_{i}^{\epsilon}(z)X_{j}^{\epsilon'}(w)&=X_{j}^{\epsilon'}(w)X_{i}^{\epsilon}(z),\quad |i-j|>1,
\end{align}
\begin{equation}
\begin{aligned}
\left[X_i^{+}(z), X_j^{-}(w)\right]&= -(-1)^{|\alpha_i|}\left(q-q^{-1}\right) \delta_{i j}\left(\delta\left(wz^{-1} q^c\right) K_{i+1}^{+}\left(w_{+}\right) K_i^{+}\left(w_{+}\right)^{-1}\right.
\\
& \quad\left.-\delta\left(wz^{-1} q^{-c}\right) K_{i+1}^{-}\left(z_{+}\right) K_i^{-}\left(z_{+}\right)^{-1}\right),
\end{aligned}
\end{equation}
where $\epsilon,\epsilon'\in\{+,-\}$ and $z_{\pm}=zq^{\pm\frac{c}{2}}$. The following are the Serre relations
\begin{align}\label{serre 1}
&\mathrm{Sym}_{z_1,z_2}\bigl[\!\!\bigl[X_{i}^{\pm}(z_1),\bigl[\!\!\bigl
[X_{i}^{\pm}(z_2),X_{i\pm 1}(w)\bigr]\!\!\bigr]\bigr]\!\!\bigr]=0 &(i\neq m),\\ \label{serre 2}
&\mathrm{Sym}_{z_1,z_2}\bigl[X_{m}^{\pm}(z_1),\bigl[\!\!\bigl[
X_{m+1}^{\pm}(z),\bigl[\!\!\bigl[X_{m}^{\pm}(z_2),X_{m-1}^{\pm}(w)
\bigr]\!\!\bigr]\bigr]\!\!\bigr]\bigr]=0 &(m,n>1),
\end{align}
where $z,w,z_1,z_2$ are formal commutative variables and
$$
\bigl[\!\!\bigl[X,Y\bigr]\!\!\bigr]=\left[X,Y\right]_{q^{-\beta(h_\gamma)}}:=
XY-(-1)^{|X||Y|}q^{-\beta(h_\gamma)}YX
$$
if $K_iXK_i^{-1}=q^{\beta(h_i)}X$ and $K_iYK_i^{-1}=q^{\gamma(h_i)}Y$.
\end{defi}

Let
\begin{align}
\dot{X}_{i}^{+}(z)&=(q-q^{-1})^{-1}X_{i}^{+}(zq^{\nu_i}), \\ 
\dot{X}_{i}^{-}(z)&=(-1)^{|\alpha_i|}(q^{-1}-q)^{-1}X_{i}^{-}(zq^{\nu_i}),\\ \dot{K}_{i}^{\pm}(z)&=K_{i+1}^{\pm}(zq^{\nu_i}){K_{i}^{\pm}(zq^{\nu_i})}^{-1},
\end{align}
where $\nu_i=\sum_{j=1}^{i}(-1)^{|j|}$. The sub-superalgebra of $U_q^{D}({\widehat{\mathfrak{gl}(m|n)}})$, which is generated by $q^{\pm\frac{c}{2}}$, $\dot{K}_i:=K_{i+1,0}K_{i,0}^{-1}$ and the coefficients of the entries of the matrices $\dot{X}_{i}^{\pm}(z)$, $\dot{K}_{i}^{\pm}(z)$ for $i\in I'$ coincides with the Drinfeld realization of $U_q^{D}({\widehat{\mathfrak{sl}(m|n)}})$ refer to \cite{BCFK22}, see also \cite{LYZ24}.  

Let $\{\alpha_i\}_{i=1}^{m+n-1}$ be the standard simple positive roots of $\widehat{\mathfrak{sl}}$, and let $\Delta^+=\{\alpha_j+\alpha_{j+1}+\ldots+\alpha_i\}_{1\leqslant j\leqslant i<m+n-1}$ be the set of positive roots. A total order ''$\leq$'' on $\Delta^+$ is defined by the condition \cite{BCFK22}:
\begin{equation}\label{order}
\alpha_j+\alpha_{j+1}+\ldots+\alpha_i\leqslant\alpha_{j'}+\alpha_{j'+1}+\ldots+\alpha_{i'},
\ \, \mathrm{iff}\ \, j<j'\ \mathrm{or}\ j=j',i\leqslant i'.
\end{equation}
For each $\beta\in \Delta^+$, let $\preceq_\beta$ be a total order on $\mathbb{Z}$. 
This allows us to define a total order on $\Delta^+\times \mathbb{Z}$ as follows:
\begin{equation}\label{extended order}
(\beta,r)\leqslant (\beta',r'), \ \, \mathrm{iff}\ \,
\beta<\beta' \ \mathrm{or}\ \beta=\beta', r\preceq_\beta r'.
\end{equation}

Let \textbf{M} denote the set of all functions \textbf{m}: $\Delta^+\times \mathbb{Z}\to \mathbb{N}$ equipped with the lexicographical order induced by the total order on $\Delta^+\times \mathbb{Z}$. Explicitly, for any \textbf{m}$_{1}$, \textbf{m}$_{2}\in$ \textbf{M}, let $(\alpha_{0},k_{0})=\mathrm{~min~}\{(\alpha,k)\in\Delta^+\times \mathbb{Z}\mid\mathrm{\textbf{m}}_{1}(\alpha,k)\neq\mathrm{\textbf{m}}_{2}(\alpha,k)\}$, then
$$
\mathrm{\textbf{m}}_{1}<\mathrm{\textbf{m}}_{2}, \mathrm{~if}\ \ \mathrm{\textbf{m}}_{1}(\alpha_{0},k_{0})<\mathrm{\textbf{m}}_{2}(\alpha_{0},k_{0}).
$$
This lexicographical order on \textbf{M} induces a natural total order on the set of monomials
$$
x_{\mathrm{\textbf{m}}}^{+}:=\prod\limits_{(\alpha,r)\in \Delta^+\times \mathbb{Z}}^{\rightarrow}(x_{\alpha,r})^{\mathrm{\textbf{m}}(\alpha,r)},\qquad
x_{\mathrm{\textbf{m}}}^{-}:=\prod\limits_{(\alpha,r)\in \Delta^+\times \mathbb{Z}}^{\leftarrow}(x_{-\alpha,r})^{\mathrm{\textbf{m}}(\alpha,r)},
$$
where the arrow $\rightarrow$ indicates that the product is taken in increasing order from left to right with respect to the order on $\Delta^+\times \mathbb{Z}$, and $\leftarrow$ indicates the opposite(decreasing) order.

We recall the Drinfeld type PBW basis constructed in \cite{Ts21}. For this we consider the total orders on $\Delta$ and on $\Delta^+\times \mathbb{Z}$. For each $(\alpha,r)\in\Delta^+\times \mathbb{Z}$, write $\alpha=\alpha_{i_{1}}+\cdots+\alpha_{i_{p}}$, and fix:
\begin{enumerate}
  \item a decomposition $r=r_{1}+\cdots+r_{p}$, $r_{i}\in\mathbb{Z}$;
  \item a sequence $(q_{1},\cdots,q_{p-1})\in\{q,q^{-1}\}^{p-1}$.
\end{enumerate}
Now, each $(\alpha,r)\in\Delta^+\times \mathbb{Z}$ defines a vector $X_{\pm\alpha,r}\in U_{q}^{\pm}$ as
$$
X_{\pm\alpha,r}:=[\cdots[[X_{i_{1},r_{1}}^{\pm},X_{i_{2},r_{2}}^{\pm}]_{q_{1}},
X_{i_{3},r_{3}}^{\pm}]_{q_{2}},\cdots,X_{i_{p},r_{p}}^{\pm}]_{q_{p-1}}.
$$

For every \textbf{m}$\in$ \textbf{M} satisfying \textbf{m}$(\alpha,k)\leqslant 1$ for $|\alpha|=1$, we define the monomials
$$
X_{\mathrm{\textbf{m}}}^{+}:=\prod\limits_{(\alpha,r)\in \Delta^+\times \mathbb{Z}}^{\rightarrow}(X_{\alpha,r})^{\mathrm{\textbf{m}}(\alpha,r)},\qquad
X_{\mathrm{\textbf{m}}}^{-}:=\prod\limits_{(\alpha,r)\in \Delta^+\times \mathbb{Z}}^{\leftarrow}(X_{-\alpha,r})^{\mathrm{\textbf{m}}(\alpha,r)}.
$$

Finally, we denote by \textbf{H} the set of all functions \textbf{h}: $I\times\mathbb{Z}\rightarrow\mathbb{Z}$ that have finite support and satisfy \textbf{h}$(i,r)\geq0$ for $r\neq0$. For any \textbf{h}$\in$ \textbf{H}, we introduce the monomials
$$
K_{\mathrm{\textbf{h}}}^{\pm}:=\prod\limits_{(i,r)\in I\times \mathbb{Z}}(K_{i,\pm r}^{\pm})^{\mathrm{\textbf{h}}(i,\pm r)},\qquad
K_{\mathrm{\textbf{h}}}^{0}:=\prod\limits_{i\in I}(K_i)^{\mathrm{\textbf{h}}(i,0)},\qquad K_{\mathrm{\textbf{h}}}:=K_{\mathrm{\textbf{h}}}^{-}K_{\mathrm{\textbf{h}}}^{0}K_{\mathrm{\textbf{h}}}^{+}.
$$

Given that $U_q^{D}(\widehat{\mathfrak{g}})$ and $U_q^{D}(\widehat{\mathfrak{sl}(m|n)})$ are identical up to a central element, the following result can be readily derived by arguments analogous to those in Theorem 4.2 of \cite{BCFK22}.

\begin{theo}\label{UD basis}
The set of monomials
$$
\{X_{\mathrm{\textbf{m}}}^{-}K_{\mathrm{\textbf{h}}}q^{\frac{rc}{2}}X_{\mathrm{\textbf{m}^{'}}}^{+}\mid\mathrm{\textbf{m}},\mathrm{\textbf{m}^{'}}\in\mathrm{\textbf{M}},
\mathrm{\textbf{h}}\in\mathrm{\textbf{H}},r\in\mathbb{Z}\}
$$
forms a linear basis of $U_{q}^{D}(\widehat{\mathfrak{g}})$.
\end{theo}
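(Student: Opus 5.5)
We argue as in Theorem 4.2 of \cite{BCFK22}: first a triangular decomposition, then a PBW basis for each piece, with $U_q^{D}(\widehat{\mathfrak{gl}(m|n)})$ compared to $U_q^{D}(\widehat{\mathfrak{sl}(m|n)})$ through the extra Cartan currents.

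\emph{Step 1 (triangular decomposition).} Let $U^{\pm}$ be the subalgebras generated by the $X_{i,k}^{\pm}$, and $U^{0}$ the subalgebra generated by $q^{\pm\frac c2}$ and the $K_{j,\pm r}^{\pm}$. The relations between $K_j^{\pm}(z)$ and $X_i^{\epsilon}(w)$, together with the commutator $[X_i^+(z),X_j^-(w)]$, let us move every $X^-$ to the left and every $X^+$ to the right. Hence multiplication $U^-\otimes U^0\otimes U^+\to U_q^{D}(\widehat{\mathfrak g})$ is surjective.

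\emph{Step 2 (spanning sets).} In $U^0$, the currents $K_j^{\pm}(z)$ commute among themselves except for the relations (\ref{kiki}) and (\ref{kikj}). These relations are exchange relations whose coefficients are series beginning with $1$, so they reorder products. It follows that the ordered monomials $K_{\mathbf h}q^{\frac{rc}{2}}$ span $U^0$. For $U^{\pm}$, the quadratic relations (\ref{com rels1 Xi}), the mixed $(z-w)$-relations, the relation $[X_m^\pm(z),X_m^\pm(w)]=0$ and the Serre relations (\ref{serre 1})--(\ref{serre 2}) give the standard straightening argument of \cite{Ts21}: each product of root vectors $X_{\pm\alpha,r}$ can be rewritten as a combination of ordered monomials $X^{\pm}_{\mathbf m}$ with respect to the order (\ref{extended order}). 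For odd $\alpha$, $X_{\alpha,r}^2$ reduces to lower terms, which accounts for the restriction $\mathbf m(\alpha,k)\le1$.

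\emph{Step 3 (linear independence).} This is the main obstacle. For $U^{\pm}$ we use the shuffle algebra embedding $\Psi$ of \cite{Ts21}. It is injective on the algebra defined by these relations. Leading terms of $\Psi(X^{\pm}_{\mathbf m})$ under a specialization map are then pairwise distinct, so the ordered monomials are independent, as proved in \cite{Ts21}. For the full algebra, one plan is to reduce to \cite{BCFK22}: the $\widehat{\mathfrak{sl}(m|n)}$ part is generated by the $\dot X_i^{\pm}$, $\dot K_i^{\pm}$ and $q^{\pm\frac c2}$. One would then check that $U_q^{D}(\widehat{\mathfrak g})$ is generated over this subalgebra by an extra Cartan current, for instance $K_1^{\pm}(z)$, whose coefficients are algebraically independent. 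The difficulty is that the extra current commutes with the $\dot X$'s only after rescaling, so it is not central, and this step may fail. If it does, I would instead prove independence directly. Construct a family of representations, for example tensor products of evaluation-type vector representations, or use a Hopf pairing between the Borel halves $U^{\ge}$ and $U^{\le}$. Show that the pairing matrix on the monomials $X^-_{\mathbf m}K_{\mathbf h}q^{\frac{rc}{2}}X^+_{\mathbf m'}$ is triangular with nonzero diagonal with respect to the lexicographic order. Nondegeneracy of the pairing then gives linear independence.

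Combining the spanning result from Step 2 with the independence from Step 3, and using the decomposition from Step 1, yields the basis claimed in Theorem \ref{UD basis}.
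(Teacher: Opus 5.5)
You follow the route the paper points to (the argument of \cite[Theorem 4.2]{BCFK22}), and your Steps 1--2 are fine: they give the spanning half. The genuine gap is Step 3, where linear independence in $U_q^{D}(\widehat{\mathfrak{gl}(m|n)})$ is never actually obtained. The shuffle embedding of \cite{Ts21} proves independence of ordered PBWD monomials in the algebras $\widetilde{U}^{\pm}$ \emph{presented} by the $X^{\pm}$-relations alone. To carry this over to the monomials $X^-_{\mathbf{m}}K_{\mathbf{h}}q^{\frac{rc}{2}}X^+_{\mathbf{m}'}$, together with independence of the ordered Cartan monomials and of the powers $q^{\frac{rc}{2}}$, you need the multiplication map $\widetilde{U}^-\otimes\widetilde{U}^0\otimes\widetilde{U}^+\to U_q^{D}(\widehat{\mathfrak{gl}(m|n)})$ to be injective. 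Step 1 only gives surjectivity. Your fallbacks do not supply injectivity as stated. A Hopf pairing lives on $U^{\geq}\times U^{\leq}$, so there is no ``pairing matrix'' on the full monomials $X^-KX^+$. Its real use would be to present $U_q^D$ as a quotient of a Drinfeld double for the topological current coproduct, and that is exactly the missing work. Tensor products of evaluation modules have level zero, so $q^{\frac c2}$ acts by $1$ on all of them, and they cannot separate monomials that differ only in the factor $q^{\frac{rc}{2}}$.

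The paper instead reduces to the $\widehat{\mathfrak{sl}(m|n)}$ result of \cite{BCFK22}, on the grounds that the two algebras are ``identical up to a central element''. You are right that $K_1^{\pm}(z)$ is not such an element. The current that does the job is the Drinfeld-side counterpart of the quantum Berezinian,
\[
\mathcal{B}^{\pm}(z)=\prod_{i=1}^{m}K_i^{\pm}(zq^{2i-2})\prod_{j=1}^{n}K_{m+j}^{\pm}(zq^{2m-2j})^{-1}.
\]
In the $K$--$X$ relations, the factors that $K_i$ and $K_{i+1}$ contribute to $X_i^{\pm}(w)$ cancel because of their relative shift $q^{\pm 2}$; for $i=m$ they cancel directly by \eqref{kiXm rel1}. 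So $\mathcal{B}^{\pm}(z)$ commutes with every $X_i^{\pm}(w)$, and hence, through the $[X^+,X^-]$ relation, with every $\dot{K}_i^{\pm}(z)$. For $m>n$ one finds that $\mathcal{B}^{\pm}(z)$ equals $K_1^{\pm}(z)K_1^{\pm}(zq^{2})\cdots K_1^{\pm}(zq^{2(m-n)-2})$ times shifted $\dot{K}$'s, and similarly for $m<n$. By Lemma~\ref{fact 1}, all $K_j^{\pm}(z)$ are then recovered from $\mathcal{B}^{\pm}$, the $\dot{K}_i^{\pm}$ and $K_{1,0}^{\pm1}$. You would still have to prove that this gives a vector-space factorization of $U_q^{D}(\widehat{\mathfrak{gl}(m|n)})$ through the $\widehat{\mathfrak{sl}}$-part, for example by writing down the inverse homomorphism. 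The basis then follows from \cite{BCFK22} by a triangular change of Cartan generators. For $m=n$, however, the factors of $\mathcal{B}^{\pm}$ pair off at equal arguments, so $\mathcal{B}^{\pm}$ already lies in the $\dot{K}$-subalgebra and no commuting complement exists. In that case neither this reduction nor the paper's one-line remark is enough, and you need a direct proof of the triangular decomposition, as in the non-super proofs for quantum affinizations.
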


\subsection{$R$-matrix presentation $U_{q}^{R}(\widehat{\mathfrak{gl}(m|n)})$}

The $R$-matrix $R(z)$ is an element of ${\rm End~} \mathcal{V}^{\otimes 2}$ defined by
\begin{equation}
\begin{aligned}\label{R-matrix}
R(z)&{=\sum_{i=1}^{m} E_{ii}\otimes E_{ii}+\frac{q-zq^{-1}}{zq-q^{-1}}\sum_{i=m+1}^{m+n}E_{ii}\otimes E_{ii}+\frac{z-1}{zq-q^{-1}}\sum_{i\neq j}E_{ii}\otimes E_{jj}}
\\
&{+\frac{z(q-q^{-1})}{zq-q^{-1}}\sum_{i< j}(-1)^{|j|}E_{ij}\otimes E_{ji}+\frac{q-q^{-1}}{zq-q^{-1}}\sum_{i> j}(-1)^{|j|}E_{ij}\otimes E_{ji}},
\end{aligned}
\end{equation}
where $z$ is a formal variable. 
The following relations hold for  $R_{21}(z)=P_{12} R_{12}(z) P_{12}$:
\begin{equation}\label{R}
R_{12}(z) R_{21}\left(z^{-1}\right)=1, \quad~R_{21}\left(z^{-1}\right)=R_{q^{-1}}(z).
\end{equation}
Recall the RTT realization of the quantum affine superalgebra $U_q({\widehat{\mathfrak{gl}(m|n)}})$ with a slight modification \cite{JLZ25}. 
\begin{defi}
The superalgebra $U_q^{R}(\widehat{\mathfrak{gl}(m|n)})$ is an associative superalgebra over $\mathbb{C}(q)$ generated by the elements ${l^{\pm}_{ij}}^{(r)}$, where $1\leqslant i, j\leqslant N$ and $r\in \mathbb{N}$. Define the generating matrices
\begin{equation*}
\begin{split}
L^{\pm}(z)= \sum_{i,j\in I}E_{ij}\otimes l^{\pm}_{ij}(z)\quad \text{with}\quad l^{\pm}_{ij}(z)= \sum_{r=0}^{\infty} {l^{\pm}_{ij}}^{(r)}z^{\mp r}.
\end{split}
\end{equation*}
The defining relations of generators are given by:
\begin{align} \label{RLL-0}
{l^+_{ji}}^{(0)}={l^-_{ij}}^{(0)}=&0,~~~~1\leqslant i<j\leqslant N, \\ \label{RLL-1}
l{^-_{ii}}^{(0)} {l^+_{ii}}^{(0)}={l^+_{ii}}^{(0)}{l^-_{ii}}^{(0)}=&1,~~~~1\leqslant i\leqslant N, \\ \label{RLL}
R_{12}(zw^{-1})L^{\pm}_{1} (z) L^{\pm}_{2} (w)=& L^{\pm}_{2} (w) L^{\pm}_{1} (z)R_{12}(zw^{-1}),\\ \label{RLL cros}
R_{12}(z_{+}w_{-}^{-1}) L^{+}_{1} (z) L^{-}_{2} (w)=& L^{-}_{2} (w) L^{+}_{1} (z)R_{12}(z_{-}w_{+}^{-1}),
\end{align}
where $z_{ \pm}=z q^{ \pm \frac{c}{2}}$ as in Section \ref{se:Drinfeldpresentation}, and
\begin{gather*}
    L_1^{ \pm}(z)=\sum_{i,j\in I} E_{ij}\otimes 1 \otimes l_{ij}^{\pm}(z),\qquad L_2^{ \pm}(z)=\sum_{i,j\in I}1\otimes E_{ij}\otimes l_{ij}^{\pm}(z).
\end{gather*}
The expansion direction of $R\left(z/w\right)$ can be chosen in $z/w$ or $w/z$ in \eqref{RLL}
and only in $z/w$ in \eqref{RLL cros}.
\end{defi}

\begin{rema}
    When taking $q^{\frac{c}{2}}=1$, the superalgebra $U_{q^{-1}}^R\big(\widehat{\mathfrak{gl}(m|n)}\big)$ coincides with the definition introduced in \cite[Definition 3.1]{Zhf16} by setting (see also \cite{LWZ24,LZ25})
    $$S(z)=L^{+}(z^{-1}),\quad T(z)=L^-(z^{-1}).$$
\end{rema}

\vspace{0.5em}
Let $L_{j}^{i}$ be the $(i,j)$-entry of the matrix $L$, and set $R_{jt}^{is}=(R_1)_{j}^{i}\otimes(R_{2})_{t}^{s}$ with $R=R_{1}\otimes R_{2}$. 
The actions of $R(z)$ and $L^{ \pm}(z)$ on the tensor product $V\otimes V$ are given by
\begin{align*}
    R(z)\left(v_{a^{\prime}} \otimes v_{b^{\prime}}\right)
&=R(z)_{a^{\prime} b^{\prime}}^{ab}\left(v_a \otimes v_b\right), \\
L^{\pm}(z) v_{a^{\prime}}&=L^{\pm}(z)_{a^{\prime}}^{a} v_a.
\end{align*}
In the above, we employ the summation convention over repeated indices, i.e., the first equation stands for
$$
R(z)\left(v_{a^{\prime}} \otimes v_{b^{\prime}}\right)=\sum_{a, b}R(z)_{a^{\prime} b^{\prime}}^{ab}\left(v_a \otimes v_b\right).
$$

The graded tensor product rule incorporates extra signs into the matrix form of the relations:
\begin{align*}
&R\left(zw^{-1}\right)_{a^{\prime \prime} b^{\prime\prime}}^{a b}L^{ \pm}(z)_{a^{\prime}}^{a^{\prime \prime} } L^{\pm}(w)_{b^{\prime}}^{b^{\prime \prime} }(-1)^{|a^{\prime}|\left(|b^{\prime}|+
|b^{\prime \prime}|\right)}
=L^{\pm}(w)_{b^{\prime\prime}}^{b} L^{\pm(z)_{a^{\prime \prime}}^{a}} R\left(zw^{-1}\right)_{a^{\prime} b^{\prime}}^{a^{\prime\prime}b^{\prime\prime}}(-1)^{|a|\left(|b|+|b^{\prime \prime}|\right)},
\\		
&R\left(z_{+}w_{-}^{-1}\right)_{a^{\prime \prime} b^{\prime \prime}}^{a b} L^{+}(z)_{a^{\prime}}^{a^{\prime \prime}} L^{-}(w)_{b^{\prime}}^{b^{\prime \prime}}(-1)^{|a^{\prime}|
\left(|b^{\prime}|+|b^{\prime \prime}|\right)}
=L^{-}(w)_{b^{\prime \prime}}^{b} L^{+}(z)_{a^{\prime \prime}}^{a} R\left(z_{-}w_{+}^{-1}\right)_{a^{\prime} b^{\prime}}^{a^{\prime \prime} b^{\prime \prime}}(-1)^{|a|\left(|b|+|b^{\prime \prime}|\right)}.
\end{align*}
We introduce the matrix $\theta$ defined as
$$
\theta_{a^{\prime} b^{\prime}}^{ab}
=(-1)^{|a||b|}\delta_{a a^{\prime}} \delta_{b b^{\prime}}.
$$
Using this matrix,  the relation \eqref{RLL} and \eqref{RLL cros} can be rewritten as
\begin{align*}
&R\left(zw^{-1}\right) L_1^{ \pm}(z) \theta L_2^{\pm}(w) \theta=\theta L_2^{ \pm}(w) \theta L_1^{ \pm}(z) R\left(zw^{-1}\right),
\\ 
&R\left(z_{+}w_{-}^{-1}\right) L_1^{+}(z) \theta L_2^{-}(w) \theta=\theta L_2^{-}(w) \theta L_1^{+}(z) R\left(z_{-}w_{+}^{-1}\right).
\end{align*}
	
Note that the tensor products above don't necessarily care about gradings. From the $RLL$ relations, it is straightforward to derive the following matrix equations:
\begin{align}\label{RLL2.1}
& R_{21}\left(zw^{-1}\right) \theta_{12} L_2^{ \pm}(z) \theta_{12} L_1^{ \pm}(w)=L_1^{ \pm}(w) \theta_{12} L_2^{ \pm}(z) \theta_{12} R_{21}\left(zw^{-1}\right),
\\ 	\label{RLL2.2}
& R_{21}\left(z_{-}w_{+}^{-1}\right) \theta_{12} L_2^{-}(z) \theta_{12} L_1^{+}(w)
=L_1^{+}(w) \theta_{12} L_2^{-}(z) \theta_{12} R_{21}\left(z_{+}w_{-}^{-1}\right),
\\
&\theta_{12} L_2^{ \pm}(z)^{-1} \theta_{12} L_1^{ \pm}(w)^{-1} R_{21}\left
(zw^{-1}\right)=R_{21}\left(zw^{-1}\right) L_1^{ \pm}(w)^{-1} \theta_{12} L_2^{ \pm}(z)^{-1} \theta_{12} ,
\\ \label{RLL2.4}
&\theta_{12} L_2^{+}(z)^{-1} \theta_{12} L_1^{-}(w)^{-1} R_{21}\left(z_{+}
w_{-}^{-1}\right)=R_{21}\left(z_{-}w_{+}^{-1}\right) L_1^{-}(w)^{-1} \theta_{12} L_2^{+}(z)^{-1} \theta_{12},
 \\ \label{RLL2.5}
& L_1^{ \pm}(w)^{-1} R_{21}\left(zw^{-1}\right) \theta_{12} L_2^{ \pm}(z) \theta_{12}=\theta_{12} L_2^{ \pm}(z) \theta_{12} R_{21}\left(zw^{-1}\right) L_1^{ \pm}(w)^{-1} ,
\\ \label{RLL2.6}
& L_1^{-}(w)^{-1} R_{21}\left(z_{+}w_{-}^{-1}\right) \theta_{12} L_2^{+}(z)\theta_{12}=\theta_{12} L_2^{+}(z) \theta_{12} R_{21}\left(z_{-}w_{+}^{-1}\right) L_1^{-}(w)^{-1},
\\ \label{RLL2.7}
& L_1^{+}(w)^{-1} R_{21}\left(z_{-}w_{+}^{-1}\right) \theta_{12}L_2^{-}(z) \theta_{12}=\theta_{12} L_2^{-}(z)\theta_{12} R_{21}\left(z_{+}w_{-}^{-1}\right) L_1^{+}(w)^{-1}.
\end{align}

\vspace{1em}
A PBW basis of $U_q^{R}(\widehat{\mathfrak{gl}(m|n)})$ was established in \cite{LZ25}.
\begin{prop}\label{base}
Let $\mathfrak{B}$ denote the set of all ordered monomials
\begin{equation}\label{base:Uq}
\begin{aligned}
\prod_{1-N\leq k\leq -1}^{\rightarrow}&\prod_{1-k\leq i\leq N}^{\rightarrow}\left\{\left({l_{i,i+k}^{-}}^{(0)}\right)^{m_{i,i+k,0}}
\left({l_{i,i+k}^{-}}^{(1)}\right)^{m_{i,i+k,1}}\left({l_{i,i+k}^{+}}^{(1)}\right)
^{\widetilde{m}_{i,i+k,1}}\cdots\right\}\\
&\times\prod_{1\leq i\leq N}^{\rightarrow}\left\{\left({l_{ii}^{-}}^{(0)}\right)
^{m_{i,i,0}}\left({l_{ii}^{+}}^{(0)}\right)^{\widetilde{m}_{i,i,0}}
\left({l_{ii}^{-}}^{(1)}\right)^{m_{i,i,0}}\left({l_{ii}^{+}}^{(1)}\right)
^{\widetilde{m}_{i,i,1}}\cdots\right\}\\
&\times\prod_{1\leqslant k\leqslant N-1}^{\rightarrow}\prod_{1\leqslant i\leqslant k}^{\rightarrow}\left\{\left({l_{i,i+k}^{+}}^{(0)}\right)^{\widetilde{m}_{i,i+k,0}}
\left({l_{i,i+k}^{-}}^{(1)}\right)^{m_{i,i+k,1}}\left({l_{i,i+k}^{+}}^{(1)}\right)
^{\widetilde{m}_{i,i+k,1}}\cdots\right\}
\end{aligned}
\end{equation}
subject to the following conditions on the exponents:
\begin{align}\label{base:1}
&m_{i,j;r},\ \widetilde{m}_{i,j;r}\in\mathbb{Z}_{\geqslant 0}\quad \text{for }|i|+|j|=\bar{0},
\\ \label{base:2}
&m_{i,j;r},\ \widetilde{m}_{i,j;r}\in\{0,1\}\quad \text{for }|i|+|j|=\bar{1},
\\ \label{base:3}
&m_{i,i;0}\widetilde{m}_{i,i;0}=0\quad\text{for }i\in I.
\end{align}
Then the monomial set $\mathfrak{B}$ forms an ordered basis of $U_q^{R}(\widehat{\mathfrak{gl}(m|n)})$ over $\mathbb{C}(q)$.
\end{prop}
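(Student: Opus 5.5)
The proof has two halves: show that $\mathfrak{B}$ spans $U_q^{R}(\widehat{\mathfrak{gl}(m|n)})$, and show that it is linearly independent.

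\emph{Spanning.} I would put a filtration on $U_q^{R}(\widehat{\mathfrak{gl}(m|n)})$ by assigning degree $r$ to ${l^{\pm}_{ij}}^{(r)}$ (possibly $r+1$, to separate the degree-zero generators). I would then extract from the component form of \eqref{RLL} and \eqref{RLL cros}, with the graded signs made explicit via $\theta$, commutation relations of the form
\[
{l^{\epsilon}_{ij}}^{(r)}{l^{\epsilon'}_{kl}}^{(s)}=(-1)^{(|i|+|j|)(|k|+|l|)}q^{a}\,{l^{\epsilon'}_{kl}}^{(s)}{l^{\epsilon}_{ij}}^{(r)}+(\text{lower terms}).
\]
Here the lower terms have strictly smaller total degree, or are earlier in the order fixed in \eqref{base:Uq}. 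Induction on degree and on the number of inversions then shows that every monomial is a combination of ordered ones. Three reductions are needed to reach the exact shape of $\mathfrak{B}$:
\begin{itemize}
\item Relations \eqref{RLL-0} remove the generators ${l^+_{ji}}^{(0)}$ and ${l^-_{ij}}^{(0)}$ with $i<j$.
\item Relation \eqref{RLL-1} gives the restriction \eqref{base:3}, since ${l^{-}_{ii}}^{(0)}{l^{+}_{ii}}^{(0)}=1$.
\item For an odd generator ($|i|+|j|=\bar1$), specializing the relation to $(k,l)=(i,j)$ and $r=s$ gives $2\,({l^{\pm}_{ij}}^{(r)})^2=$ lower terms, which yields \eqref{base:2}.
\end{itemize}

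\emph{Linear independence.} This is the main obstacle. I would argue via a classical limit. Let $\mathbb{A}=\mathbb{C}[q,q^{-1}]_{(q-1)}$ and take the $\mathbb{A}$-subalgebra generated by the elements $\tau^{\pm}_{ij}{}^{(r)}=({l^{\pm}_{ij}}^{(r)}-\delta_{ij}\delta_{r0})/(q-q^{-1})$, suitably adjusted for the diagonal degree-zero generators, which are written as $q^{h}$-type elements. I would show that its specialization at $q=1$ is a quotient of $U(\widehat{\mathfrak{gl}(m|n)})$ (or of the loop superalgebra with central $c$). In that quotient, the images of the ordered monomials are ordered monomials in a basis of the Lie superalgebra, with exterior exponents in the odd variables. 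By the PBW theorem for Lie superalgebras these images are independent, and a standard Nakayama-type argument lifts the independence back over $\mathbb{C}(q)$.

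The delicate part is showing that the specialization map is well defined and actually surjects onto the right Lie superalgebra. In particular one has to check that the $\tau$'s satisfy the loop $\mathfrak{gl}(m|n)$ supercommutators modulo $q-1$. Alternatively, one could construct representations that separate the monomials, namely tensor products of evaluation modules on $\mathcal{V}\otimes\mathbb{C}[z^{\pm1}]$ obtained from $L^{\pm}(z)\mapsto R(z/a)$. The key step in that approach is to show that leading terms are nondegenerate in a Vandermonde-type sense, as in the non-super Yangian arguments.
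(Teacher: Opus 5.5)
The paper does not prove this proposition; it quotes it from \cite{LZ25}. Your argument therefore has to stand on its own, and its independence half has a genuine logical gap.

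\textbf{The independence half.} Checking that the rescaled generators satisfy the supercommutation relations of $\mathcal{L}\mathfrak{gl}(m|n)$ modulo $q-1$ gives a surjection $U(\mathcal{L}\mathfrak{gl}(m|n))\to U_{\mathbb{A}}/(q-1)U_{\mathbb{A}}$. That is, it shows the specialization is a \emph{quotient} of the enveloping algebra, as you say. But this only shows that the images of ordered monomials \emph{span} the specialization. The PBW theorem gives independence of ordered monomials in $U(\mathcal{L}\mathfrak{gl}(m|n))$, not of their images in a quotient. So ``in that quotient \dots\ these images are independent'' does not follow.

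What you actually need is that this surjection is injective. Equivalently, clearing powers of $q-1$ from relations of $U_q^{R}$ must never produce relations beyond those of the loop superalgebra. This is the nontrivial half of Lemma~\ref{gamma}, which the paper imports from \cite{LWZ24}, and it carries essentially the whole content of the independence claim. It cannot come from checking relations of the $\tau$'s, because that only produces maps \emph{into} the specialization. You need maps \emph{out of} $U_q^{R}$, for example:
\begin{itemize}
\item a family of representations with $\mathbb{A}$-lattices (say tensor products of the vector evaluation modules $L^{\pm}(z)\mapsto R(z/a)$) whose $q\to 1$ limits form a faithful family for $U(\mathcal{L}\mathfrak{gl}(m|n))$; or
\item a direct proof that the monomials of $\mathfrak{B}$ act independently on such modules.
\end{itemize}
So the representation-theoretic route you list as an ``alternative'' is not optional. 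It is the missing step, and its key nondegeneracy claim is left unproved.

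\textbf{The group-like part and the central element.} Even granting injectivity, this needs repair. Modulo $q-1$ one has ${l^{\pm}_{ii}}^{(0)}\equiv 1$, and by \eqref{modulo:1} also ${\gamma^{+}_{ii}}^{(0)}\equiv -{\gamma^{-}_{ii}}^{(0)}$. Hence distinct elements of $\mathfrak{B}$, such as $({l^{+}_{ii}}^{(0)})^{k}$ and $({l^{-}_{ii}}^{(0)})^{k}$, do not reduce to distinct PBW monomials. The fix is:
\begin{itemize}
\item multiply a putative relation by a large power of $\prod_i {l^{+}_{ii}}^{(0)}$, which is invertible and $q$-commutes with every generator;
\item re-expand the resulting nonnegative powers of ${l^{+}_{ii}}^{(0)}=1+(q-1){\gamma^{+}_{ii}}^{(0)}$ triangularly over $\mathbb{C}(q)$ in powers of ${\gamma^{+}_{ii}}^{(0)}$.
\end{itemize}
Also, $\mathfrak{B}$ contains no powers of $q^{\pm c/2}$. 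The relevant setting is therefore the loop case $c=0$, with classical limit $U(\mathcal{L}\mathfrak{gl}(m|n))$ rather than $U(\widehat{\mathfrak{gl}(m|n)})$.

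\textbf{The spanning half.} Your sketch describes a mechanism the relations do not provide.
\begin{itemize}
\item The $L^{\pm}L^{\pm}$ relations in \eqref{RLL} are homogeneous in $\sum r$, so no correction term has smaller total degree.
\item They are exchange relations linking $l^{(a+1)}l^{(b)}$ and $l^{(a)}l^{(b+1)}$ with their reversals, not commutators modulo lower terms.
\item For an odd entry, the component of \eqref{RLL} with both factors equal to $l^{+}_{ij}$ expresses a nonzero multiple of $({l^{+}_{ij}}^{(r)})^{2}$ through ${l^{+}_{ij}}^{(r-1)}{l^{+}_{ij}}^{(r+1)}$ and the out-of-order product ${l^{+}_{ij}}^{(r+1)}{l^{+}_{ij}}^{(r-1)}$. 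This is not ``lower terms''. Setting $r=s$ in a pairwise relation, as you propose, gives nothing here.
\end{itemize}
Straightening still terminates, but by descending induction on the spread $|a-b|$ at fixed total degree. The base case is the extreme pair, which simply $q$-commutes. Counting inversions does not work, and the mixed-index and cross relations need their own bookkeeping.
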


\subsection{Gauss decomposition of $L^{\pm}(z)$}


Consider an $N\times N$ matrix $X=(x_{ij})_{ij}$ over a unital ring. For indices $i,j\in I$, let $X^{ij}$ be the submatrix obtained by removing the $i$-th row and $j$-th column of $X$, and assume that $X^{ij}$ is invertible.
\begin{defi} (\cite{GGRW05})
The $(i,j)$-th quasideterminant of $X$ is given by the formula
$$
|X|_{i j}=x_{ij}-r_{i}^{j}(X^{ij})^{-1}c_{j}^{i}.
$$
Here, $r_{i}^{j}$ denotes the row vector from the $i$-th row of $X$ with $x_{ij}$ deleted, and $c_{i}^{j}$ denotes the column vector from the $j$-th column of $X$ with $x_{ij}$ deleted.
\end{defi}
The quasideterminant $|X|_{i j}$ has a graphical notation where the entry $x_{ij}$ is highlighted with a box:
$$
|X|_{i j}=\left|\begin{array}{ccccc}
x_{11} & \cdots & x_{1 j} & \cdots & x_{1 n} \\
\cdots & & \cdots & \\
x_{i 1} & \cdots & \boxed{x_{i j}} & \cdots & x_{i n} \\
\cdots & & \cdots & \\
x_{n 1} & \cdots & x_{n j} & \cdots & x_{n n}
\end{array}\right|.
$$

Now we introduce the Gaussian generators for the superalgebra $U_q^{R}(\widehat{\mathfrak{gl}(m|n)})$. For $i<j$, define
\begin{align*}
k_i^{\pm}(z)&=\left|\begin{array}{cccc}
l_{11}^{\pm}(z) & \cdots & l_{1, i-1}^{\pm}(z) & l_{1 i}^{\pm}(z) \\
\vdots & \ddots & & \vdots \\
l_{i 1}^{\pm}(z) & \cdots & l_{i, i-1}^{\pm}(z) & \boxed{l_{i i}^{\pm}(z)}
\end{array}\right|, 
\end{align*}
\begin{align*}
f_{i j}^{\pm}(z)&={k_i^{\pm}(z)}^{-1}\left|\begin{array}{cccc}
l_{11}^{\pm}(z) & \cdots & l_{1, i-1}^{\pm}(z) & l_{1 j}^{\pm}(z) \\
\vdots & \ddots & \vdots & \vdots \\
l_{i-1, i}^{\pm}(z) & \cdots & l_{i-1, i-1}^{\pm}(z) & l_{i-1, j}^{\pm}(z) \\
l_{i 1}^{\pm}(z) & \cdots & l_{i, i-1}^{\pm}(z) & \boxed{l_{i j}^{\pm}(z)}
\end{array}\right|, 
\end{align*}
\begin{align*}
e_{j i}^{\pm}(z)&=\left|\begin{array}{cccc}
l_{11}^{\pm}(z) & \cdots & l_{1, i-1}^{\pm}(z) & l_{1 i}^{\pm}(z) \\
\vdots & \ddots & \vdots & \vdots \\
l_{i-1,1}^{\pm}(z) & \cdots & l_{i-1, i-1}^{\pm}(z) & l_{i-1, i}^{\pm}(z) \\
l_{j i}^{\pm}(z) & \cdots & l_{j, i-1}^{\pm}(z) & \boxed{l_{j i}^{\pm}(z)}
\end{array}\right|{k_i^{\pm}(z)}^{-1} .
\end{align*}

The Gauss decomposition remains valid in our setting.
\begin{prop}(\cite{FHS97})
The generating matrices $L^{\pm}(z)$ have the unique Gauss decomposition:
\begin{equation}\label{Gauss_dec}
\begin{aligned}
L^{\pm}(z)&=\left(\begin{array}{cccc}
1 & & & 0 \\
e_{21}^{\pm}(z) & \ddots & & \vdots \\
\vdots & & \ddots \\
e_{m+n, 1}^{\pm}(z) & e_{m+n, 2}^{\pm}(z) & \cdots & 1
\end{array}\right)
\left(\begin{array}{cccc}
k_1^{\pm}(z) & & \cdots & 0 \\
& k_2^{\pm}(z) & & \vdots \\
\vdots & & \ddots & \\
0 & \cdots & & k^{\pm}_{m+n}(z)
\end{array}\right)\\
&\hspace{3em}\times\left(\begin{array}{cccc}
1 & f_{12}^{\pm}(z) & \cdots & f_{1, m+n}^{\pm}(z) \\
& \ddots & & f_{2, m+n}^{\pm}(z) \\
& & \ddots & \vdots \\
0 & & & 1
\end{array}\right),
\end{aligned}
\end{equation}
where the coefficients of $e_{ji}^{\pm}(z)$, $f_{ij}^{\pm}(z)$ and $k_{i}^{\pm}(z)$, $1\leqslant i,j \leqslant m+n$ with $i<j$, belong to the superalgebra $U_q^{R}(\widehat{\mathfrak{gl}(m|n)})$ and the $k_{i}^{\pm}(z)$ are invertible.
\end{prop}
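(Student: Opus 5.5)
The plan is to prove existence and uniqueness of the Gauss decomposition \eqref{Gauss_dec} by induction on $N=m+n$. The approach rests on one fact: the matrices $L^{\pm}(z)$ have invertible leading principal submatrices when viewed over a suitable completion of $U_q^{R}(\widehat{\mathfrak{gl}(m|n)})$.

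\textbf{Invertibility of the diagonal entries.} We work in the algebra of formal power series in $z^{\mp1}$ with coefficients in $U_q^{R}(\widehat{\mathfrak{gl}(m|n)})$. By \eqref{RLL-0}, the constant term $L^{\pm}(z)|_{z^{0}}=\bigl({l^{\pm}_{ij}}^{(0)}\bigr)$ is upper triangular for $L^{+}$ and lower triangular for $L^{-}$. By \eqref{RLL-1}, its diagonal entries ${l^{\pm}_{ii}}^{(0)}$ are invertible. Hence every leading principal $i\times i$ submatrix of $L^{\pm}(z)$ has a triangular constant term with invertible diagonal. Such a series matrix is invertible: first invert the triangular constant matrix, which is possible since the diagonal entries are even and invertible, and then expand the remaining part as a geometric series in $z^{\mp1}$. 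In particular, $l^{\pm}_{11}(z)$ is invertible, being a power series with invertible constant term ${l^{\pm}_{11}}^{(0)}$.

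\textbf{Inductive step.} Put $k_1^{\pm}(z)=l^{\pm}_{11}(z)$, $f^{\pm}_{1j}(z)=k_1^{\pm}(z)^{-1}l^{\pm}_{1j}(z)$ and $e^{\pm}_{j1}(z)=l^{\pm}_{j1}(z)k_1^{\pm}(z)^{-1}$. The matrix $L^{\pm}(z)$ then factors as a lower unitriangular matrix, times $\mathrm{diag}(k_1^{\pm}(z),S)$, times an upper unitriangular matrix. Here $S$ is the Schur complement, with entries $l^{\pm}_{ab}(z)-l^{\pm}_{a1}(z)k_1^{\pm}(z)^{-1}l^{\pm}_{1b}(z)$ for $a,b\geqslant2$. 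The constant term of $S$ is the Schur complement of the constant triangular matrix, which is again triangular with the same invertible diagonal entries ${l^{\pm}_{ii}}^{(0)}$ for $i\geqslant2$. Hence $S$ satisfies the same hypotheses, and induction yields its Gauss decomposition. Assembling the factors gives \eqref{Gauss_dec}, and the heredity property of quasideterminants \cite{GGRW05} identifies the entries with the quasideterminant formulas for $k_i^{\pm}$, $f_{ij}^{\pm}$ and $e_{ji}^{\pm}$ given above. Each coefficient of these entries is a finite polynomial in the generators ${l^{\pm}_{ij}}^{(r)}$ and the inverses $({l^{\pm}_{ii}}^{(0)})^{-1}={l^{\mp}_{ii}}^{(0)}$, so it lies in $U_q^{R}(\widehat{\mathfrak{gl}(m|n)})$.

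\textbf{Uniqueness and grading.} Uniqueness follows by a standard argument. Suppose $E D F=E' D' F'$. Then $E'^{-1}E D=D'F'F^{-1}$, where the left side is lower triangular and the right side is upper triangular. Both sides must therefore be diagonal, which forces $E=E'$, $D=D'$ and $F=F'$, since the unitriangular factors have diagonal $1$. The main point needing care is the supergrading. Because the entries $l_{ij}^{\pm}(z)$ have parity $|i|+|j|$, the diagonal entries and $k_i^{\pm}(z)$ are even. Inverting these even invertible elements causes no sign issues, and the matrix multiplications involve only products of homogeneous elements in a fixed order, so the noncommutative Gauss elimination goes through verbatim. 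I expect the only genuine obstacle to be justifying invertibility of the principal submatrices, which is settled by the triangularity of the zero modes as described above.
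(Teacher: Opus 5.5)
Your proof is correct. The paper gives no argument of its own for this proposition; it simply cites \cite{FHS97}, with the quasideterminant expressions taken from \cite{GGRW05}. Your Schur-complement induction is the standard argument behind that citation. It is driven by the triangularity of the zero modes in \eqref{RLL-0} and the invertibility in \eqref{RLL-1}, and both are used correctly.

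What your version buys over the bare citation is that it makes explicit two facts the paper later uses without comment:
\begin{itemize}
\item $k_i^{\pm}(z)$ has constant term exactly ${l^{\pm}_{ii}}^{(0)}$, because the row correction has zero constant term by triangularity. The paper relies on this for the formula for $b_0^{\pm}$ in Section 4 and for ${\gamma^{\pm}_{ii}}^{(0)}=(k^{\pm}_{i,0}-1)/(q-1)$ in the $\mathbb{A}$-form.
\item Every coefficient of $k_i^{\pm}$, $e_{ji}^{\pm}$, $f_{ij}^{\pm}$ is a finite polynomial in the ${l^{\pm}_{ij}}^{(r)}$ and ${l^{\mp}_{ii}}^{(0)}=({l^{\pm}_{ii}}^{(0)})^{-1}$. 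So no completion of $U_q^{R}(\widehat{\mathfrak{gl}(m|n)})$ is needed.
\end{itemize}

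Two minor remarks. First, in the uniqueness step, passing from ``$(E')^{-1}ED$ is diagonal'' to $(E')^{-1}E=1$ uses that the diagonal entries of $D$ are invertible, or at least cancellable. This is part of the statement, so you should say so explicitly. Alternatively, uniqueness is forced directly: $(EDF)_{11}=d_1=l^{\pm}_{11}(z)$, this determines the first row and column of the unitriangular factors, and the same recursion continues on the Schur complement.

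Second, the paper's displayed quasideterminants for $f_{ij}^{\pm}$ and $e_{ji}^{\pm}$ contain index misprints. The first-column entry $l^{\pm}_{i-1,i}$ in $f_{ij}^{\pm}$ should be $l^{\pm}_{i-1,1}$, and the last row of $e_{ji}^{\pm}$ should begin with $l^{\pm}_{j1}$. Your heredity argument therefore identifies the factors with the corrected formulas.
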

More precisely, the entries of $L^{\pm}(z)$ satisfy the following relations:
\begin{align}\label{Gauss ii}
l_{ii}^{\pm}(z)&= k_{i}^{\pm}(z) + \sum_{s < i} e_{is}^{\pm}(z) k_{s}^{\pm}(z) f_{si}^{\pm}(z),\\ \label{Gauss ij}
l_{ij}^{\pm}(z)&= k_{i}^{\pm}(z) f_{ij}^{\pm}(z) + \sum_{s< i} e_{is}^{\pm}(z) k_{s}^{\pm}(z) f_{sj}^{\pm}(z),\\ \label{Gauss ji}
l_{ji}^{\pm}(z)&= e_{ji}^{\pm}(z) k_{i}^{\pm}(z) + \sum_{s <i} e_{js}^{\pm}(z) k_{s}^{\pm}(z) f_{si}^{\pm}(z).
\end{align}

The inverse matrix $L^{ \pm}(z)^{-1}$ admits the following Gauss decomposition:
\begin{align}\notag
L^{\pm}(z)^{-1}&= \left(\begin{array}{cccc}
			1 & -f_1^{ \pm}(z) & \cdots & \\
			\vdots & & \ddots & \vdots \\
			& & & -f_{N-1}^{ \pm}(z) \\
			0 & \cdots & & 1
		\end{array}\right)\left(\begin{array}{ccc}
			k_1^{ \pm}(z)^{-1} & \cdots & 0 \\
			\vdots & \ddots & \vdots \\
			0 & \cdots & k_N^{ \pm}(z)^{-1}
		\end{array}\right) \\ \label{e:Gauss}
		&\hspace{3em} \times\left(\begin{array}{cccc}
			1 & \cdots & & 0 \\
			-e_1^{ \pm}(z) &  & & \\
			\vdots & \ddots & & \vdots \\
			&\cdots & -e_{N-1}^{ \pm}(z) & 1
		\end{array}\right).
\end{align}

Throughout the following, we use the abbreviated notation $e_{i}^{\pm}(z):=e_{i + 1,i}^{\pm}(z)$ and $f_{i}^{\pm}(z):=f_{i,i+ 1}^{\pm}(z)$. Furthermore, we introduce the coefficient expansions:
\begin{align*}
k^{\pm}_{i}(z)&= \sum_{r=0}^{\infty} {k^{\pm}_{i,\pm r}}z^{\mp r};\\
e^{+}_{i}(z)&= \sum_{r=1}^{\infty} {e^{+}_{i}}^{(r)}z^{- r},~~e^{-}_{i}(z)= \sum_{r=0}^{\infty} {e^{-}_{i}}^{(r)}z^{r};\\
f^{+}_{i}(z)&= \sum_{r=0}^{\infty} {f^{+}_{i}}^{(r)}z^{- r},~~
f^{-}_{i}(z)= \sum_{r=1}^{\infty} {f^{-}_{i}}^{(r)}z^{r}.
\end{align*}

Now we state one of the main results for this paper.
\begin{theo}\label{main theo}
There exists a $\mathbb{C}(q)$-superalgebra isomorphism $\varphi${\rm :} $U_{q}^{D}(\widehat{\mathfrak{gl}(m|n)})\rightarrow U_q^{R}(\widehat{\mathfrak{gl}(m|n)})$ defined as follows{\rm :}
\begin{align}\label{X+}
K_{j}^{\pm}(z)&\mapsto k_{j}^{\pm}(z),\\
X_{i}^{+}(z)&\mapsto x_{i}^{+}(z):=e_{i}^{+}(z_{-})-e_{i}^{-}(z_{+}),\\ \label{X-}
X_{i}^{-}(z)&\mapsto x_{i}^{-}(z):=f_{i}^{+}(z_{+})-f_{i}^{-}(z_{-}),
\end{align}
where $i=1,\cdots,N-1$ and $j=1,\cdots,N$.
\end{theo}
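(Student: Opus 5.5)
The argument splits into two parts: showing that $\varphi$ is a well-defined surjective homomorphism, and then showing that it is injective.

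\emph{Well-definedness.} We check that the Gaussian generators $k_j^{\pm}(z)$, $x_i^{\pm}(z)$ satisfy every defining relation of Definition \ref{Drinfeld def}. Following Ding--Frenkel, we reduce to small ranks. The key tool is an embedding lemma. For the lower-right corner, the quasideterminant submatrix $\bigl(|L^{\pm}(z)|_{ab}\bigr)_{a,b>k}$ (built from the Gaussian generators with indices $>k$) satisfies the $RLL$ relations \eqref{RLL} and \eqref{RLL cros} for the superalgebra of type $\mathfrak{gl}(m-k|n)$, respectively $\mathfrak{gl}(0|n-(k-m))$ once $k\geqslant m$. For the upper-left corner, the submatrix of $L^{\pm}(z)$ with indices $\leqslant k$ already satisfies these relations. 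We prove this from the inverse-matrix forms \eqref{RLL2.1}--\eqref{RLL2.7} and the Gauss decomposition \eqref{Gauss_dec}, \eqref{e:Gauss}.

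With this lemma, every relation involving only indices $i,i+1$ reduces to the rank-two cases $\mathfrak{gl}(2|0)$, $\mathfrak{gl}(0|2)$ and $\mathfrak{gl}(1|1)$. Relations involving $i,i+1,i+2$ reduce to the rank-three cases $\mathfrak{gl}(3|0)$, $\mathfrak{gl}(2|1)$, $\mathfrak{gl}(1|2)$ and $\mathfrak{gl}(0|3)$.

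In each small case we write out the $RLL$ relations entrywise using \eqref{Gauss ii}--\eqref{Gauss ji}, and extract the following:
\begin{itemize}
\item the $K$--$K$ relations, including \eqref{kiki} and \eqref{kikj};
\item the $K$--$X$ relations;
\item the $X$--$X$ exchange relations;
\item the $[X^+,X^-]$ relation. Here the two halves $e^{\pm}$, $f^{\pm}$ combine via \eqref{RLL cros} into the formal $\delta$-function, with the shifts $z_\pm$ as in \eqref{X+}, \eqref{X-}.
\end{itemize}
The relations for $|i-j|>1$ follow from commutation of the corner subalgebras. The $\mathfrak{gl}(1|1)$ case gives $[X_m^\pm(z),X_m^\pm(w)]=0$ because the odd diagonal $R$-matrix coefficient forces $(e_m^{\pm})^2$-type terms to vanish.

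The Serre relations \eqref{serre 1} reduce to the even rank-three cases. The super Serre relation \eqref{serre 2} reduces to $\mathfrak{gl}(2|2)$. We verify these by expressing the nested $q$-commutators as coefficients of $e_{i+2,i}^{\pm}$-type generators, in the spirit of \cite{JLM20-1}.

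\emph{Surjectivity.} By \eqref{Gauss ii}--\eqref{Gauss ji}, the entries $l_{ij}^{\pm}(z)$ are generated by the $e_{ji}^{\pm}$, $f_{ij}^{\pm}$ and $k_i^{\pm}$. Using the exchange relations, the non-simple $e_{ji}^{\pm}$ and $f_{ij}^{\pm}$ are obtained as $q$-commutators of $e_{j-1,i}^{\pm}$ with the zero-mode of $e_{j-1}^{\pm}$, and similarly for $f$. Moreover, the coefficients of $e_i^{\pm}$ and $f_i^{\pm}$ are recovered from those of $x_i^{\pm}$ by taking the positive or negative mode parts. Hence $\varphi$ is surjective.

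\emph{Injectivity.} This is where we expect the main obstacle. We pass to $\mathbb{A}$-forms with $\mathbb{A}=\mathbb{C}[q,q^{-1}]_{(q-1)}$. Let $U^D_{\mathbb{A}}$ be generated by the rescaled Drinfeld elements, and let $U^R_{\mathbb{A}}$ be generated by $(l_{ij}^{\pm(r)}-\delta_{ij}\delta_{r0})/(q-q^{-1})$. One checks that $\varphi$ maps $U^D_{\mathbb{A}}$ into $U^R_{\mathbb{A}}$.

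Specialising at $q=1$, both sides become the universal enveloping algebra of the loop superalgebra $\mathcal{L}\mathfrak{gl}(m|n)$ (central extension included). The images of the PBW monomials of Theorem \ref{UD basis} then map to PBW-type monomials in $\mathcal{L}\mathfrak{gl}(m|n)$. These are linearly independent by the PBW theorem for Lie superalgebras, and this is compatible with the basis $\mathfrak{B}$ of Proposition \ref{base}.

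To conclude, suppose $\varphi$ had a nonzero kernel element. After scaling by a power of $q-1$ we may take it to lie in $U^D_{\mathbb{A}}$ with nonzero image at $q=1$. Its image would vanish, contradicting linear independence at $q=1$.

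The delicate points in this step are two. First, we must show that the PBW monomials of Theorem \ref{UD basis} span a free $\mathbb{A}$-lattice whose specialisation is a basis. Second, we must control the odd root vectors, whose exponents are restricted to $\{0,1\}$, consistently with \eqref{base:2}.
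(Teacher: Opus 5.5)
Your architecture matches the paper's: Gauss decomposition, reduction to low rank via $\psi_p$ and truncation, explicit $RLL$ computations in ranks two and three, surjectivity from \eqref{Gauss ii}--\eqref{Gauss ji}, and injectivity through $\mathbb{A}$-forms, Lemma~\ref{gamma} and Theorem~\ref{UD basis}. The paper organises the reduction differently. It inducts on $N$ and computes the relations between $k_1^{\pm},e_1^{\pm},f_1^{\pm}$ and $k_N^{\pm},e_{N-1}^{\pm},f_{N-1}^{\pm}$ directly from \eqref{RLL2.5}--\eqref{RLL2.7}. It proves \eqref{serre 2} by rewriting each quartic monomial as a rational multiple of one fixed ordering. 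Your route to \eqref{serre 2} through $e_{i+2,i}$-type elements is longer. It does, however, avoid dividing exchange relations by factors like $z-w$, which is not legitimate for formal series.

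Two steps fail as written. First, the corner subalgebras do not commute when $c\neq 0$. In \eqref{RLL cros} the diagonal $R$-matrix entries appear with argument $z_+w_-^{-1}$ on one side and $z_-w_+^{-1}$ on the other. Hence $k_j^{\pm}(z)$ and $k_i^{\mp}(w)$ with $j<i$ satisfy the twisted relation \eqref{kikj} (compare \eqref{k1k3 rel2}), not commutation. What holds is this: same-sign entries of the two corners commute, and opposite-sign entries commute up to a uniform scalar factor. That factor cancels in $e=\ell k^{-1}$ and $f=k^{-1}\ell$, which is why the $X_iX_j$ relations and the non-adjacent $K$--$X$ relations survive. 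You need to state and prove this twisted lemma. It is also the only source in your scheme for \eqref{kikj} when $i-j\geqslant 3$, since such a pair lies in no rank-two or rank-three window.

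Two smaller slips. The relation \eqref{serre 1} for $i=m\pm 1$ lives in the mixed cases $\mathfrak{gl}(2|1)$ and $\mathfrak{gl}(1|2)$, not in the purely even or purely odd ones. Also, $e^{+}_{j,j-1}(z)$ and $f^{-}_{j-1,j}(z)$ have no constant term by \eqref{RLL-0}. So in the surjectivity step the $q$-commutators must use ${e^{-}_{j-1}}^{(0)}$ and ${f^{+}_{j-1}}^{(0)}$ for both signs.

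Second, the monomials of Theorem~\ref{UD basis} do not specialise to distinct PBW monomials. For every $h\in\mathbb{Z}$ we have $\varphi(K_{i,0}^{h})=\bigl({l_{ii}^{+}}^{(0)}\bigr)^{h}\equiv 1$ modulo $(q-1)U^R_{\mathbb{A}}$. So monomials differing only in their Cartan zero-mode factors have the same image at $q=1$. You must first replace these factors by divided elements such as $\bigl((K_{i,0}-1)/(q-1)\bigr)^{s}$, related triangularly to the powers $K_{i,0}^{h}$; this is the role of the $K_{i;u,v,s}$ in Corollary~\ref{base UAQ}.

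The factors $q^{rc/2}$ raise the same problem. Moreover, Lemma~\ref{gamma} identifies $\overline{U^R_{\mathbb{A}}}$ with the enveloping algebra of the loop superalgebra, without central extension. So either argue at the loop level, as the paper does, or add a divided generator for $q^{c/2}$ and extend that lemma. Your parenthetical ``central extension included'' is not supported by anything you invoke.

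On the other hand, you do not need a free $\mathbb{A}$-lattice. A spanning set of $U^D$ contained in $U^D_{\mathbb{A}}$, whose images in $\overline{U^R_{\mathbb{A}}}$ are linearly independent, already gives injectivity by your clearing-of-denominators argument.
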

This theorem will be proved in Section \ref{se:isomorphismtheorem}.

\section{Proof of Theorem \ref{main theo} }\label{se:isomorphismtheorem}
We divide our proof into three steps:
\begin{enumerate}
    \item[(1)] Firstly, we give a series of homomorphisms of quantum affine superalgebras, which will be used in the subsequent proofs.
    \item[(2)] Secondly, we formulate the necessary relations of Gaussian generators to be consistent with those of the Drinfeld current generators. Exactly, $\varphi$ is a surjective homomorphism. 
    \item[(3)] Lastly, we show the injectivity of $\varphi$ in terms of the $\mathbb{A}$-form approach. 
\end{enumerate}

\subsection{Homomorphisms between quantum affine superalgebras}
\begin{lemm}(\cite[Lemma 2.3]{JLZ25})\label{hom1}
The following mapping $\omega_{m|n}$ defines an anti-isomorphism of~ $U_q^{R}(\widehat{\mathfrak{gl}(m|n)})$
\begin{equation*}
\omega_{m|n}(L^{\pm}(z))= L^{\pm}(z)^{-1}.
\end{equation*}
\end{lemm}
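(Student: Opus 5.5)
We need to show that $\omega_{m|n}$, defined on generators by $L^{\pm}(z)\mapsto L^{\pm}(z)^{-1}$ and extended as a superalgebra anti-homomorphism (with the graded sign rule $\omega(xy)=(-1)^{|x||y|}\omega(y)\omega(x)$), respects the defining relations \eqref{RLL-0}--\eqref{RLL cros}, and then that it is bijective.

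First, the triangularity and constant-term relations. By \eqref{RLL-0} and \eqref{RLL-1}, $L^{+}(z)$ has an upper triangular leading coefficient $L^{+(0)}$ and $L^{-}(z)$ a lower triangular leading coefficient $L^{-(0)}$, with invertible diagonal entries. Hence $L^{\pm}(z)^{-1}$ exists as a power series in $z^{\mp1}$. Its leading coefficient $(L^{\pm(0)})^{-1}$ is again upper (resp. lower) triangular, with diagonal entries $({l^{\pm}_{ii}}^{(0)})^{-1}={l^{\mp}_{ii}}^{(0)}$. So the images of the generators satisfy \eqref{RLL-0}, and \eqref{RLL-1} follows immediately.

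Second, the $RLL$ relations, which I would handle through the matrix identities already recorded. Applying the graded anti-homomorphism to the entries of \eqref{RLL} reverses the order of the two $L$-factors. Keeping track of the super signs, the $\theta_{12}$ conjugations absorb the sign $(-1)^{|x||y|}$ coming from reordering odd entries. The result is that $\omega$ turns \eqref{RLL} into
\[
R_{12}(zw^{-1})\,\theta L_2^{\pm}(w)^{-1}\theta\,L_1^{\pm}(z)^{-1}=L_1^{\pm}(z)^{-1}\,\theta L_2^{\pm}(w)^{-1}\theta\,R_{12}(zw^{-1})
\]
(up to the harmless relabeling $1\leftrightarrow 2$ via $P$). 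This identity holds in $U_q^{R}$: it is precisely the third displayed relation after \eqref{RLL2.2}, i.e. the inverted $RLL$ relation, which is obtained by multiplying \eqref{RLL2.1} by the inverses on both sides. Similarly, the image of \eqref{RLL cros} is the mixed relation \eqref{RLL2.4}, with the spectral parameters $z_{\pm},w_{\mp}$ interchanged exactly as required. When using $R_{12}(z)R_{21}(z^{-1})=1$ from \eqref{R}, I must check that the expansion directions of $R$ agree (only $z/w$ in the cross relation).

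The main obstacle is the careful sign bookkeeping: one has to verify that the entrywise graded anti-homomorphism corresponds exactly to the $\theta$-conjugated matrix form, i.e. that $\omega$ applied to $(L_1\theta L_2\theta)^{a b}_{a'b'}$ yields $(\theta L_2^{-1}\theta L_1^{-1})$ with the correct parity factors. I would do this index by index using the component form of the relations displayed before $\theta$ was introduced.

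Finally, bijectivity. I would show that $\omega^2=\mathrm{id}$ on generators, which requires $\omega(L^{\pm}(z)^{-1})=L^{\pm}(z)$. Write $\omega(L^{-1})$ entrywise from $L\,L^{-1}=1$. Applying $\omega$ to the entries of this identity gives $\omega(L^{-1})\cdot L^{-1}=1$ in the appropriately graded (supertransposed) sense, so $\omega(L^{-1})=L$. Being an involutive anti-homomorphism, $\omega_{m|n}$ is an anti-isomorphism.
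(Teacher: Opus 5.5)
The paper gives no proof of this lemma; it only quotes Lemma 2.3 of JLZ25. Your handling of \eqref{RLL-0}, \eqref{RLL-1} and the same-sign relation \eqref{RLL} is correct. With $\omega(xy)=(-1)^{|x||y|}\omega(y)\omega(x)$ and the ordinary matrix inverse, the component form of the image of \eqref{RLL} is exactly $R_{12}(zw^{-1})\,\theta L_2^{\pm}(w)^{-1}\theta\,L_1^{\pm}(z)^{-1}=L_1^{\pm}(z)^{-1}\,\theta L_2^{\pm}(w)^{-1}\theta\,R_{12}(zw^{-1})$, which is the $\theta$-form of \eqref{RLL} inverted. However, two steps fail as written.

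\emph{The cross relation.} The same computation shows that the image of \eqref{RLL cros} under $\omega$ is
\[
R_{12}(z_+w_-^{-1})\,\theta L_2^{-}(w)^{-1}\theta\,L_1^{+}(z)^{-1}=L_1^{+}(z)^{-1}\,\theta L_2^{-}(w)^{-1}\theta\,R_{12}(z_-w_+^{-1}).
\]
Inverting both sides of \eqref{RLL cros} (the content of \eqref{RLL2.4}, up to swapping tensor factors) instead gives
\[
R_{12}(z_-w_+^{-1})\,\theta L_2^{-}(w)^{-1}\theta\,L_1^{+}(z)^{-1}=L_1^{+}(z)^{-1}\,\theta L_2^{-}(w)^{-1}\theta\,R_{12}(z_+w_-^{-1}).
\]
The interchange of $z_\pm w_\mp^{-1}$ and $z_\mp w_\pm^{-1}$ is not ``exactly as required''; it is precisely the substitution $q^{c/2}\mapsto q^{-c/2}$. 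So $\omega$ respects \eqref{RLL cros} only if you also set $\omega(q^{c/2})=q^{-c/2}$, as the antipode does. With $q^{c/2}$ fixed and $q^{2c}\neq1$ the map is not an anti-homomorphism. For instance, when $n\geqslant1$ the $(N,N;N,N)$ entries involve only $(L^{\pm})^{-1}_{NN}=k_N^{\pm}(z)^{-1}$, and they would force $k_N^{\pm}$ to satisfy both \eqref{kiki} and its $c\mapsto-c$ version. The lemma must be read with this extra assignment, and your verification has to include it.

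\emph{Bijectivity: $\omega^2\neq\mathrm{id}$.} Applying $\omega$ to $L\,L^{-1}=1$ only shows that $\omega(L^{-1})$ is the inverse of $L^{-1}$ computed over the opposite superalgebra. That is, it is the matrix $M$ with $\sum_j(-1)^{(|i|+|j|)(|j|+|k|)}M_{jk}\,l'_{ij}=\delta_{ik}$, where $l'_{ij}$ are the entries of $L^{-1}$. This $M$ is not $L$, because the entries of $L$ and $L^{-1}$ do not supercommute.

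Concretely, take $(m,n)=(2,0)$ and put $a={l^+_{11}}^{(0)}$, $b={l^+_{12}}^{(0)}$, $d={l^+_{22}}^{(0)}$. The constant term of \eqref{RLL} gives $ad=da$, $ab=q^{-1}ba$ and $bd=q^{-1}db$. Since $\omega(b)=-a^{-1}bd^{-1}$, you get $\omega^2(b)=d\,a^{-1}b\,d^{-1}a=q^{2}b\neq b$. This is expected: $\omega$ is the antipode, whose square is in general a nontrivial automorphism.

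Surjectivity is easy, since the image contains the entries of $L^{\pm}(z)^{-1}$ and hence those of $L^{\pm}(z)$. Injectivity needs a genuine argument. One route is to define $\bar\omega$ by $q^{c/2}\mapsto q^{-c/2}$ and $L^{\pm}(z)\mapsto\widetilde L^{\pm}(z)$, the two-sided inverse of $L^{\pm}(z)$ over the opposite superalgebra. Rewriting \eqref{RLL} and \eqref{RLL cros} in the opposite algebra and inverting there shows $\bar\omega$ is a well-defined anti-homomorphism. The two-sided inverse properties then give $\bar\omega\circ\omega=\omega\circ\bar\omega=\mathrm{id}$ on generators. Alternatively, compute $\omega^2$ explicitly and show it is an automorphism.
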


Using relation \eqref{R}, we multiply both sides of \eqref{RLL} by the inverse of $R\left(\frac{z}{w}\right)$ from the left and the right, we obtain
\begin{equation*}
R\left(zw^{-1}\right)L_{1}^{\pm}(z)^{-1}L_{2}^{\pm}(w)^{-1}=L_{2}^{\pm}(w)^{-1}
L_{1}^{\pm}(z)^{-1}R\left(zw^{-1}\right).
\end{equation*}
Let $\check{L}^{\pm}(z)$ be the generator matrix of $U_{q}^{R}({\widehat{\mathfrak{gl}(n|m)}})$. We then have the following isomorphism.
\begin{lemm}(\cite[Lemma 2.4]{JLZ25})\label{hom2}
The mapping $\rho_{m|n}:U_{q}^{R}({\widehat{\mathfrak{gl}(m|n)}})\rightarrow U_{q}^{R}({\widehat{\mathfrak{gl}(n|m)}})$ defined by
\begin{equation*}
\rho_{m|n}(l_{ij}^{\pm}(z))=\check{l}_{m+n+1-i,m+n+1-j}^{\mp}\left(z^{-1}\right)
\end{equation*}
is an algebra isomorphism. Here, $\check{l}_{ij}^{\pm}(z)$ denotes the corresponding generator of~ $U_{q}^{R}({\widehat{\mathfrak{gl}(n|m)}})$.
\end{lemm}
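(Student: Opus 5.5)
The plan is to check directly that $\rho_{m|n}$ respects every defining relation of $U_q^{R}(\widehat{\mathfrak{gl}(m|n)})$, and then to obtain bijectivity from an explicit inverse.

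Set $i'=N+1-i$, and let $J$ be the linear map $v_i\mapsto \check v_{i'}$. The parity of $i'$ in the $(n|m)$ grading is $1-|i|$, so $J$ reverses the parity of every basis vector. Relations \eqref{RLL-0} and \eqref{RLL-1} are immediate:
\begin{itemize}
\item For $i<j$, the relation ${l^+_{ji}}^{(0)}=0$ is sent to ${\check l^-_{j'i'}}^{(0)}=0$ with $j'<i'$, which is a defining relation of $U_q^{R}(\widehat{\mathfrak{gl}(n|m)})$. The relation ${l^-_{ij}}^{(0)}=0$ is handled symmetrically.
\item The relation \eqref{RLL-1} is sent to the corresponding relation for $\check l^{\mp}_{i'i'}$.
\end{itemize}
Also, $\rho_{n|m}\circ\rho_{m|n}$ and $\rho_{m|n}\circ\rho_{n|m}$ fix all generators, since $i''=i$ and $z^{-1}$ inverted twice returns $z$. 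So once $\rho_{m|n}$ and $\rho_{n|m}$ are shown to be homomorphisms, they are mutually inverse.

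The core of the argument is a conjugation identity for the $R$-matrix. I will compute $(J\otimes J)^{-1}\check R_{12}(z^{-1})(J\otimes J)$ entry by entry from \eqref{R-matrix}, where $\check R$ denotes the $R$-matrix of $\mathfrak{gl}(n|m)$. Under $J$ the following happens:
\begin{itemize}
\item the even and odd diagonal blocks are exchanged;
\item the order of the indices is reversed, so the coefficients of $\sum_{i<j}$ and $\sum_{i>j}$ are swapped;
\item the super sign $(-1)^{|j|}$ in the off-diagonal terms becomes $-(-1)^{|j|}$.
\end{itemize}
Using $\frac{q-z^{-1}q^{-1}}{z^{-1}q-q^{-1}}=\frac{zq-q^{-1}}{q-zq^{-1}}$, I expect to obtain
\[
(J\otimes J)^{-1}\check R(z^{-1})(J\otimes J)=g(z)\,\widetilde R(z),
\]
where $g(z)$ is a scalar series, namely $\frac{zq-q^{-1}}{q-zq^{-1}}$ or $-\frac{zq-q^{-1}}{q-zq^{-1}}$, and $\widetilde R(z)$ is $R_{12}(z)$ or $R_{21}(z)$ twisted by the diagonal sign operator $D=\sum_i(-1)^{|i|}E_{ii}$ in one tensor factor. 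The identities \eqref{R} are the tool for matching the swapped coefficients with $R_{21}$.

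I expect the sign bookkeeping to be the main obstacle. Flipping parities changes $\theta$, because
\[
(-1)^{(1-|a|)(1-|b|)}=-(-1)^{|a||b|}(-1)^{|a|}(-1)^{|b|}.
\]
It also changes the signs in the graded matrix form of the $RLL$ relations. My first attempt will be to show that all such factors are of the form $D_1^{\epsilon}D_2^{\epsilon'}$ times a global sign. These factors commute with $R$ up to the same twist, so they cancel between the two sides of the relation.

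With the conjugation identity in hand, I apply $\rho_{m|n}$ to \eqref{RLL}. After conjugating by $J\otimes J$ and replacing $z,w$ by $z^{-1},w^{-1}$, it becomes an $RLL$ relation for $\check L^{\mp}$ with argument $(zw^{-1})^{-1}$. The scalar $g$ cancels, and the relation reduces to one of the equivalent forms \eqref{RLL2.1}--\eqref{RLL2.7} in $U_q^{R}(\widehat{\mathfrak{gl}(n|m)})$.

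The mixed relation \eqref{RLL cros} is sent to a relation between $\check L^-_1(z^{-1})$ and $\check L^+_2(w^{-1})$ with arguments $z^{-1}_{\pm}(w^{-1})_{\mp}^{-1}$. I will match it with \eqref{RLL2.2}, which has exactly the $L^-$-then-$L^+$ ordering, and I will check that the shifts $q^{\pm c/2}$ correspond under $z\mapsto z^{-1}$. Finally I will check that the expansion direction (in $z/w$ only) is compatible with that of \eqref{RLL2.2} after inversion.
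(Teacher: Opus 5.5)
The paper gives no argument for this lemma beyond citing Jing--Li--Zhang, so I measure your plan against what the direct check actually produces.

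\textbf{What works.} Your treatment of \eqref{RLL-0}, \eqref{RLL-1}, the inverse $\rho_{n|m}$ and the same-sign relation \eqref{RLL} is sound, and the alternative you leave open comes out favourably.

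Push $R$ forward by $\tau\colon E_{ij}\mapsto \check E_{i'j'}$. This is an \emph{even} superalgebra isomorphism $\mathrm{End}\,\mathcal{V}\to\mathrm{End}\,\check{\mathcal{V}}$, so applying $\tau\otimes\tau\otimes\rho$ to \eqref{RLL} creates no signs, and the $\theta$/$D$ bookkeeping disappears. The index reversal swaps the $i<j$ and $i>j$ coefficients, and $x\mapsto x^{-1}$ swaps them back. The parity flip exchanges the diagonal coefficients and turns $(-1)^{|j|}$ into $-(-1)^{|j|}$. The result is
\[
(\tau\otimes\tau)\bigl(R_{12}(x)\bigr)=g(x)\,\check R_{12}(x^{-1}),\qquad g(x)=-\frac{q-xq^{-1}}{xq-q^{-1}} .
\]
So you land on $R_{12}$, not $R_{21}$, and this is decisive. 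With $R_{21}$ you would get the defining relation of the opposite algebra, which is not among \eqref{RLL2.1}--\eqref{RLL2.7}, and $\rho$ would be an anti-homomorphism. That is exactly what happens for the same index reversal without a parity flip.

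One caveat on the input. This identity needs the odd--odd diagonal entry to be $\frac{zq^{-1}-q}{zq-q^{-1}}$, which is the sign compatible with the Yang--Baxter equation. With the sign printed in \eqref{R-matrix}, the $E_{ii}\otimes E_{ii}$ terms and all other terms come out with opposite scalars.

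\textbf{The gap: the cross relation \eqref{RLL cros}.} Its two $R$-matrices have different arguments. With $x=zw^{-1}$, $u=z^{-1}$, $v=w^{-1}$, the image under $\rho$ is
\[
g(xq^{c})\,\check R(u_-v_+^{-1})\,\check L_1^-(u)\check L_2^+(v)=g(xq^{-c})\,\check L_2^+(v)\check L_1^-(u)\,\check R(u_+v_-^{-1}).
\]
The shifts do match the $P_{12}$-conjugate of \eqref{RLL2.2}, as you expected. But the two scalars cancel only if $q^{2c}=1$.

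No bookkeeping can repair this, because the statement fails for a nontrivial central charge. Take $m\ge 1$; the case $m=0$ is symmetric.
\begin{itemize}
\item In $U_q^R(\widehat{\mathfrak{gl}(m|n)})$, the $E_{11}\otimes E_{11}$-component of \eqref{RLL cros} gives $l^+_{11}(z)l^-_{11}(w)=l^-_{11}(w)l^+_{11}(z)$, since the even diagonal entry of $R$ is $1$.
\item In $U_q^R(\widehat{\mathfrak{gl}(n|m)})$ the index $N$ is odd. Take the $E_{NN}\otimes E_{NN}$-component, compare coefficients of $z^{-1}w$, and use \eqref{RLL-1}. This gives
\[
\bigl[{\check l^{+}_{NN}}{}^{(1)},\,{\check l^{-}_{NN}}{}^{(1)}\bigr]=(q^{-2}-q^{2})(q^{c}-q^{-c}).
\]
\end{itemize}
Hence $\rho$ sends the zero element $[{l^{+}_{11}}^{(1)},{l^{-}_{11}}^{(1)}]$ to a nonzero one. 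This is the $R$-matrix counterpart of the fact that $K_i^\pm$ commute for $i\le m$ but satisfy \eqref{kiki} for $i>m$.

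Your argument therefore proves the loop case $q^{c/2}=1$, where $g(zw^{-1})$ appears on both sides of every relation and cancels. It would also prove a version in which the two $R$-matrices are renormalized by scalar functions compatible with the parity flip. It does not prove the lemma as stated.
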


Let the inverses of $L^{\pm}(z)$ and $\check{L}^{\pm}(z)$ be denoted by
$$
L^{\pm}(z)^{-1}=\left(\left(l_{ij}^{\pm}(z)\right)^{'}\right)_{i,j=1}^{m+n},
\quad \check{L}^{\pm}(z)^{-1}=\left(\left(\check{l}_{ij}^{\pm}(z)\right)^{'}\right)_{i,j=1}^{m+n}.
$$
By Lemma \ref{hom1} and \ref{hom2}, We can get the following algebra isomorphism.
\begin{prop}\label{m to n}
Let $\zeta_{m|n}:U_{q}^{R}({\widehat{\mathfrak{gl}(m|n)}})\rightarrow U_{q}^{R}({\widehat{\mathfrak{gl}(n|m)}})$ be the algebra isomorphism given by $\zeta_{m|n}=\rho_{m|n}\circ \omega_{m|n}$. That is,
$$
\zeta_{m|n}: l_{ij}^{\pm}(z)\mapsto \left(\check{l}_{m+n+1-i,m+n+1-j}^{\mp}\left(z^{-1}\right)\right)^{'}
$$
 Then, for $1\leq i\leq m+n$ and $1\leq j\leq m+n-1$, we have
\begin{equation*}
k_{i}^{\pm}(z)\mapsto \check{k}_{m+n+1-i}^{\mp}\left(z^{-1}\right)^{-1},\quad
e_{j}^{\pm}(z)\mapsto -\check{f}_{m+n-j}^{\mp}\left(z^{-1}\right), \quad
f_{j}^{\pm}(z)\mapsto -\check{e}_{m+n-j}^{\mp}\left(z^{-1}\right),
\end{equation*}
In the above, $\check{k}_{i}^{\pm}(z)$, $\check{e}_{j}^{\pm}(z)$, $\check{f}_{j}^{\pm}(z)$ are the corresponding Gaussian generators of $U_{q}^{R}({\widehat{\mathfrak{gl}(n|m)}})$.
\end{prop}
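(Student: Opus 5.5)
The proof has two parts. First, $\zeta_{m|n}=\rho_{m|n}\circ\omega_{m|n}$ is bijective because both factors are. Second, the images of the Gaussian generators are read off from the uniqueness of the Gauss decomposition \eqref{Gauss_dec}.

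\textbf{Bijectivity and the image of $L^{\pm}(z)$.} By Lemma \ref{hom1}, $\omega_{m|n}$ is bijective; by Lemma \ref{hom2}, $\rho_{m|n}$ is bijective. So $\zeta_{m|n}$ is bijective. Since $\omega_{m|n}$ sends $L^{\pm}(z)$ to $L^{\pm}(z)^{-1}$, the composite sends $l^{\pm}_{ij}(z)$ to the $(i,j)$-entry of $\rho_{m|n}(L^{\pm}(z)^{-1})$.

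This entry equals $\bigl(\check l^{\mp}_{N+1-i,N+1-j}(z^{-1})\bigr)'$. To see this, apply $\rho_{m|n}$ to the identity $L^{\pm}(z)L^{\pm}(z)^{-1}=1$, entry by entry. The reversal $i\mapsto N+1-i$ carries rows and columns along, so the image of $L^{\pm}(z)^{-1}$ is a two-sided inverse of $J\check L^{\mp}(z^{-1})J$. Here $J=\sum_i E_{i,N+1-i}$ is the antidiagonal permutation matrix.

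The parities are consistent. An index $i\le m$ is even in $\mathfrak{gl}(m|n)$, while $N+1-i\ge n+1$ is odd in $\mathfrak{gl}(n|m)$. Thus every index parity is flipped, the entry parities $|i|+|j|$ are preserved, and the sign factors coming from $\theta$ and $P$ match on both sides.

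Hence
\begin{equation*}
\zeta_{m|n}\bigl(L^{\pm}(z)\bigr)=J\,\check L^{\mp}(z^{-1})^{-1}J .
\end{equation*}
Checking this identity carefully is the step I expect to be the main obstacle. One must verify that the anti-multiplicativity of $\omega_{m|n}$, together with the graded signs, does not reverse the order of the factors in the matrix product; otherwise an extra supertranspose would appear. I would first check it directly on the relations \eqref{Gauss ii}--\eqref{Gauss ji}, or on $N=2$, before writing the general argument.

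\textbf{Reading off the Gaussian generators.} Now insert the Gauss decomposition \eqref{e:Gauss} of $\check L^{\mp}(w)^{-1}$ with $w=z^{-1}$. It has the form $\check F'\check K^{-1}\check E'$, where:
\begin{itemize}
\item $\check F'$ is upper unitriangular with superdiagonal entries $-\check f^{\mp}_{k}(w)$;
\item $\check K^{-1}=\mathrm{diag}\bigl(\check k^{\mp}_1(w)^{-1},\dots,\check k^{\mp}_N(w)^{-1}\bigr)$;
\item $\check E'$ is lower unitriangular with subdiagonal entries $-\check e^{\mp}_{k}(w)$.
\end{itemize}
Conjugating by $J$ gives
\begin{equation*}
J\check F'\check K^{-1}\check E'J=(J\check F'J)(J\check K^{-1}J)(J\check E'J).
\end{equation*}
The three factors are, respectively:
\begin{itemize}
\item $J\check F'J$ is lower unitriangular, with $(i+1,i)$-entry $\check F'_{N-i,N+1-i}=-\check f^{\mp}_{N-i}(w)$;
\item $J\check K^{-1}J$ is diagonal, with $i$-th entry $\check k^{\mp}_{N+1-i}(w)^{-1}$;
\item $J\check E'J$ is upper unitriangular, with $(i,i+1)$-entry $\check E'_{N+1-i,N-i}=-\check e^{\mp}_{N-i}(w)$.
\end{itemize}

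On the other side, applying $\zeta_{m|n}$ to \eqref{Gauss_dec} gives a lower$\cdot$diagonal$\cdot$upper factorization of $\zeta_{m|n}(L^{\pm}(z))$ into $\zeta_{m|n}(E)$, $\zeta_{m|n}(K)$ and $\zeta_{m|n}(F)$. By the uniqueness of the Gauss decomposition, the corresponding factors agree. Comparing entries gives
\begin{equation*}
k_i^{\pm}(z)\mapsto\check k^{\mp}_{N+1-i}(z^{-1})^{-1},\quad
e_j^{\pm}(z)\mapsto-\check f^{\mp}_{N-j}(z^{-1}),\quad
f_j^{\pm}(z)\mapsto-\check e^{\mp}_{N-j}(z^{-1}),
\end{equation*}
which is exactly the claim, since $N=m+n$.
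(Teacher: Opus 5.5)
Your argument is essentially the paper's, and it goes through once its crux is put in the right place. The paper reads off $\zeta_{m|n}(k_1^{\pm}(z))$, $\zeta_{m|n}(f_{1j}^{\pm}(z))$ and $\zeta_{m|n}(e_{j1}^{\pm}(z))$ from \eqref{Gauss ii}--\eqref{Gauss ji} and the explicit entries of $\check L^{\mp}(z^{-1})^{-1}$ given by \eqref{e:Gauss}, and then inducts on $i$. That induction is just uniqueness of the Gauss decomposition carried out entry by entry. Your factor-by-factor comparison with $(J\check F'J)(J\check K^{-1}J)(J\check E'J)$ reaches the same conclusion more cleanly, and it also gives the images of all $e^{\pm}_{ji}(z)$, $f^{\pm}_{ij}(z)$ at once.

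The step you single out as the main obstacle is misplaced.
\begin{itemize}
\item The identity $\zeta_{m|n}(L^{\pm}(z))=J\check L^{\mp}(z^{-1})^{-1}J$ needs no verification. It is literally the displayed action $l^{\pm}_{ij}(z)\mapsto(\check l^{\mp}_{m+n+1-i,m+n+1-j}(z^{-1}))'$ in the statement, so you need not re-derive it by applying $\rho_{m|n}$ to $L^{\pm}(z)L^{\pm}(z)^{-1}=1$.
\item The order of multiplication matters at the next step, when you push \eqref{Gauss_dec} through $\zeta_{m|n}$. To get the lower$\cdot$diagonal$\cdot$upper factorization $\zeta_{m|n}(E)\zeta_{m|n}(K)\zeta_{m|n}(F)$, you need $\zeta_{m|n}(xy)=\zeta_{m|n}(x)\zeta_{m|n}(y)$.
\item If $\zeta_{m|n}$ reversed products, you would get a factorization of a (super)transposed matrix, with the roles of $\zeta_{m|n}(E)$ and $\zeta_{m|n}(F)$ exchanged. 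The uniqueness comparison would then not apply, and the conclusion would fail. Already for $m+n=2$ you would get $\zeta_{m|n}(f_1^{\pm}(z))=\zeta_{m|n}(l^{\pm}_{12}(z))\,\zeta_{m|n}(l^{\pm}_{11}(z))^{-1}=-\check k_2^{\mp}(z^{-1})^{-1}\check e_1^{\mp}(z^{-1})\check k_2^{\mp}(z^{-1})$, not $-\check e_1^{\mp}(z^{-1})$.
\end{itemize}
So multiplicativity of $\zeta_{m|n}$ is the real input. Take it from the statement, which declares $\zeta_{m|n}$ an algebra isomorphism with the displayed action. The paper's induction relies on it in the same way, e.g.\ in $\zeta_{m|n}(f^{\pm}_{1j})=\zeta_{m|n}(k^{\pm}_1)^{-1}\zeta_{m|n}(l^{\pm}_{1j})$.

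Do not try to extract multiplicativity from Lemmas \ref{hom1} and \ref{hom2} as written. An anti-isomorphism followed by an isomorphism is an anti-isomorphism, which is exactly the failure mode above. With multiplicativity and the displayed action taken as given, your uniqueness argument is complete.
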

\begin{proof}
According to \eqref{e:Gauss}, we derive the following expressions:
\begin{align*}
\left(\check{l}_{ii}^{\pm}(z)\right)^{'}&= \check{k}_{i}^{\pm}(z)^{-1} + \sum_{u >i} \left(\check{f}_{iu}^{\pm}(z)\right)^{'} \check{k}_{u}^{\pm}(z)^{-1} \left(\check{e}_{ui}^{\pm}(z)\right)^{'},\\
\left(\check{l}_{ij}^{\pm}(z)\right)^{'}&= \left(\check{f}_{ij}^{\pm}(z)\right)^{'} \check{k}_{j}^{\pm}(z)^{-1} + \sum_{u> j} \left(\check{f}_{iu}^{\pm}(z)\right)^{'} \check{k}_{u}^{\pm}(z)^{-1} \left(\check{e}_{uj}^{\pm}(z)\right)^{'},\\
\left(\check{l}_{ji}^{\pm}(z)\right)^{'}&= \check{k}_{j}^{\pm}(z)^{-1} \left(\check{e}_{ji}^{\pm}(z)\right)^{'} + \sum_{k >j} \left(\check{f}_{ju}^{\pm}(z)\right)^{'} \check{k}_{u}^{\pm}(z)^{-1} \left(\check{e}_{ui}^{\pm}(z)\right)^{'},
\end{align*}
where
$$
\left(\check{f}_{ij}^{\pm}(z)\right)^{'} = \sum_{i=i_{0}< i_{1} < \ldots < i_{s} = j}
(-1)^{s} \check{f}_{i_{0}i_{1}}^{\pm}(z) \check{f}_{i_{1}i_{2}}^{\pm}(z) \cdots \check{f}_{i_{s-1}i_{s}}^{\pm}(z)
$$
and
$$
\left(\check{e}_{ji}^{\pm}(z)\right)^{'} = \sum_{i=i_{0} < i_{1} < \ldots < i_{s} =j}
(-1)^{s} \check{e}_{i_{s}i_{s-1}}^{\pm}(z) \cdots \check{e}_{i_{2}i_{1}}^{\pm}(z) \check{e}_{i_{1}i_{0}}^{\pm}(z).
$$
Applying equations \eqref{Gauss ii}--\eqref{Gauss ji}, we immediately find: $\zeta_{m|n}(k_{1}^{\pm}(z)) = \check{k}_{m+n}^{\mp}\left(z^{-1}\right)^{-1}$,
$\zeta_{m|n}\left( f_{1j}^{\pm}(z)\right) =\left(\check{e}_{m+n, m+n+1-j}^{\mp}\left(z^{-1}\right)\right)^{'} $, and $\zeta_{m|n}\left(e_{j1}^{\pm}(z)\right) = \left(\check{f}_{m+n+1-j, m+n}^{\mp}\left(z^{-1}\right)\right)^{'}$.  By
induction on $i$, we derive:
\begin{align*}
\zeta_{m|n}\left(k_{i}^{\pm}(z)\right)&= \check{k}_{m+n+1-i}^{\mp}\left(z^{-1}\right)^{-1},\\
\zeta_{m|n}\left( f_{ij}^{\pm}(z)\right)&=\left(\check{e}_{m+n+1-i, m+n+1-j}^{\mp}\left(z^{-1}\right)\right)^{'} ,\\
\zeta_{m|n}\left(e_{ji}^{\pm}(z)\right)&=\left(\check{f}_{m+n+1-j, m+n+1-i}^{\mp}\left(z^{-1}\right)\right)^{'}.
\end{align*}
The result stated in the proposition corresponds to the special case $j=i+1$.
\end{proof}

For any $p\geqslant 0$ we introduce the algebra homomorphism
\begin{equation*}
\phi_p: U_q^{R}(\widehat{\mathfrak{gl}(m|n)})\rightarrow  U_q^{R}(\widehat{\mathfrak{gl}(m+p|n)}),
\end{equation*}
which takes $l^{\pm}_{ij}(z)$ to $l^{\pm}_{p+i,p+j}(z)$. Consider the composition
\begin{equation*}
\psi_p=\omega_{p+m|n}\circ\phi_p\circ\omega_{m|n}.
\end{equation*}

\begin{lemm}(\cite[Lemma 4.7]{JLZ25})\label{lemma syl}
For any $1\leqslant i,j\leqslant m+n$, we have
\begin{equation*}
\psi_p(l^{\pm}_{ij}(z))=\left|\begin{array}{cccc}
l^{\pm}_{11}(z)&\cdots&l^{\pm}_{1p}(z)&l^{\pm}_{1,p+j}(z)
\\
\vdots&&\vdots&\vdots
\\
l^{\pm}_{p1}(z)&\cdots&l^{\pm}_{pp}(z)&l^{\pm}_{p,p+j}(z)
\\
l^{\pm}_{p+i,1}(z)&\cdots&l^{\pm}_{p+i,p}(z)&\framebox{$l^{\pm}_{p+i,p+j}(z)$}
\end{array}\right|.
\end{equation*}
\end{lemm}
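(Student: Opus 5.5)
The plan is to write $\psi_p=\omega_{p+m|n}\circ\phi_p\circ\omega_{m|n}$ entrywise and identify the result with a block inverse (Schur complement) formula. By Lemma \ref{hom1}, $\omega_{m|n}$ is an anti-isomorphism with $\omega_{m|n}(L^{\pm}(z))=L^{\pm}(z)^{-1}$. Hence $\omega_{m|n}(l^{\pm}_{ij}(z))=(l^{\pm}_{ij}(z))'$, the $(i,j)$-entry of $L^{\pm}(z)^{-1}$ in $U_q^R(\widehat{\mathfrak{gl}(m|n)})$.

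\emph{Step 1: apply $\phi_p$.} The map $\phi_p$ is an algebra homomorphism sending the matrix $L^{\pm}(z)$ of $\mathfrak{gl}(m|n)$ to the lower-right $(m+n)\times(m+n)$ block $D$ of the generator matrix $\mathcal{L}^{\pm}(z)$ of $\mathfrak{gl}(m+p|n)$. Therefore it sends $L^{\pm}(z)^{-1}$ to $D^{-1}$. Note that $D$ is invertible, since its Gauss decomposition comes from the lower-right part of that of $\mathcal{L}^{\pm}(z)$ via \eqref{Gauss_dec}. Thus $\phi_p\omega_{m|n}(l_{ij}^{\pm}(z))=(D^{-1})_{ij}$.

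\emph{Step 2: apply $\omega_{p+m|n}$.} This replaces each entry $\mathcal{L}_{ab}$ by $(\mathcal{L}^{-1})_{ab}$ and reverses products. Write
\[
\mathcal{L}^{\pm}(z)=\begin{pmatrix}A&B\\ C&D\end{pmatrix},
\]
with $A$ the upper-left $p\times p$ block. Under $\omega_{p+m|n}$ the block $D$ goes to the lower-right block of $\mathcal{L}^{-1}$. By the block-inverse formula this block is $(D-CA^{-1}B)^{-1}$, where $A$ is invertible because its Gauss factors are the first $p$ of those of $\mathcal{L}$.

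The key claim is then that the anti-homomorphism $\omega$ sends the inverse of a matrix of generators to the inverse of the image matrix. This holds because for an anti-homomorphism the relation $XY=1$ entrywise becomes $\omega(Y)^{t}\omega(X)^{t}=1$ in the appropriate transposed sense. Combining, $\omega_{p+m|n}((D^{-1})_{ij})$ should be the $(i,j)$-entry of $D-CA^{-1}B$. Its $(i,j)$-entry is exactly
\[
l^{\pm}_{p+i,p+j}-r\,A^{-1}c,
\]
which is the displayed quasideterminant.

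\emph{Main obstacle.} The delicate point is the interaction of the anti-isomorphism with matrix inversion, with transposition, and with the $\mathbb{Z}_2$-grading signs. Applying an anti-homomorphism to $\mathcal{L}\mathcal{L}^{-1}=1$ reverses the order of the factors inside each product $\mathcal{L}_{ik}(\mathcal{L}^{-1})_{kj}$. So one must check carefully that $\omega(D^{-1})$ is the genuine two-sided inverse of $\omega(D)=(\mathcal{L}^{-1})_{\text{lower-right}}$, without a transpose or sign twist. I would handle this by using both identities $\mathcal{L}\mathcal{L}^{-1}=1$ and $\mathcal{L}^{-1}\mathcal{L}=1$: their images under $\omega$ are relations of the form $\sum_k\omega(\mathcal{L}^{-1})_{kj}\,\omega(\mathcal{L})_{ik}=\delta_{ij}$. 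For the even/odd bookkeeping I would choose a parity-free ordering of entries; in the paper's conventions, with the grading of $l_{ij}$ being $|i|+|j|$, the signs cancel in these sums. A safer alternative is to verify the identity directly on Gauss coordinates. Since $\omega$ inverts $L^{\pm}(z)$, it sends $k_i\mapsto k_i^{-1}$ and maps the $e$'s and $f$'s to the triangular entries of the inverse factors, as in \eqref{e:Gauss}. One then compares with the quasideterminant expansion \eqref{Gauss ii}--\eqref{Gauss ji} restricted to the indices $1,\dots,p,p+i,p+j$, using the heredity property of quasideterminants from \cite{GGRW05}.
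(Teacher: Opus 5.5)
The gap is exactly the step you flag as the ``key claim'', and it is not a sign issue. An anti-homomorphism applied entrywise does not send the inverse of a matrix to the inverse of the image matrix.

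\emph{What actually holds.} Apply $\omega=\omega_{p+m|n}$ to $DD^{-1}=1$ and to $D^{-1}D=1$. You get
\[
\sum_k\omega\bigl((D^{-1})_{kj}\bigr)\,\omega(D_{ik})=\delta_{ij},\qquad \sum_k\omega(D_{kj})\,\omega\bigl((D^{-1})_{ik}\bigr)=\delta_{ij}.
\]
So $\omega(D^{-1})^{t}$ is the two-sided inverse of $\omega(D)^{t}$, and $\omega(D^{-1})=\bigl((\omega(D)^{t})^{-1}\bigr)^{t}$. Over a noncommutative ring this differs from $\omega(D)^{-1}$, already when all entries are even.

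\emph{The failure is real.} For $p=0$ your argument would give $\omega_{m|n}^{2}=\mathrm{id}$, but $\omega_{m|n}$ is antipode-like. Take $m\geqslant 2$ and set $a={l^{+}_{11}}^{(0)}$, $b={l^{+}_{12}}^{(0)}$, $d={l^{+}_{22}}^{(0)}$.
\begin{itemize}
\item The $z^{0}w^{0}$ coefficient of \eqref{RLL}, with $R(zw^{-1})$ expanded in $w/z$, gives $ab=q^{-1}ba$, $db=q\,bd$ and $ad=da$.
\item Directly, $\omega_{m|n}(b)=-a^{-1}bd^{-1}$.
\item Hence $\omega_{m|n}^{2}(b)=d\,a^{-1}b\,d^{-1}a=q^{2}b$.
\end{itemize}
The same computation gives $\psi_1\bigl({l^{+}_{12}}^{(0)}\bigr)=q^{2}\,{l^{+}_{23}}^{(0)}$. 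The constant term of the displayed quasideterminant is ${l^{+}_{23}}^{(0)}$. So with $\omega_{m|n}$ the anti-isomorphism of Lemma~\ref{hom1}, the identity does not even hold verbatim.

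\emph{The Gauss-coordinate fallback has a separate flaw.} Since $k_1^{\pm}(z)=l^{\pm}_{11}(z)$, any map with $L^{\pm}(z)\mapsto L^{\pm}(z)^{-1}$ sends $k_1^{\pm}(z)$ to $\bigl(L^{\pm}(z)^{-1}\bigr)_{11}$. By \eqref{e:Gauss} this equals $k_1^{\pm}(z)^{-1}+f_1^{\pm}(z)k_2^{\pm}(z)^{-1}e_1^{\pm}(z)$ when $m+n=2$, not $k_1^{\pm}(z)^{-1}$.

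\emph{What your argument does prove.} Your Schur-complement argument is correct for $\chi_p:=\omega_{p+m|n}\circ\phi_p\circ\omega_{m|n}^{-1}$.
\begin{itemize}
\item $\chi_p$ is a homomorphism.
\item It sends $L^{\pm}(z)^{-1}$ entrywise to the lower-right block $\widetilde{D}$ of $\mathcal{L}^{\pm}(z)^{-1}$.
\item A homomorphism does preserve matrix inverses, so $\chi_p(L^{\pm}(z))=\widetilde{D}^{-1}=D-CA^{-1}B$. Its $(i,j)$ entry is the displayed quasideterminant.
\end{itemize}
Since $\psi_p=\chi_p\circ\omega_{m|n}^{2}$, you must do one of the following:
\begin{itemize}
\item define $\psi_p$ with $\omega_{m|n}^{-1}$ in place of $\omega_{m|n}$;
\item use a genuine automorphism inverting $L^{\pm}(z)$, as Brundan--Kleshchev do with $T(u)\mapsto T(-u)^{-1}$ for Yangians, where your argument is the standard proof;
\item carry the twist $\omega_{m|n}^{2}$ through the statement.
\end{itemize}

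\emph{A minor point.} $D$ is invertible simply because $D=\phi_p(L^{\pm}(z))$ has inverse $\phi_p(L^{\pm}(z)^{-1})$. The lower-right Gauss factors of $\mathcal{L}^{\pm}(z)$ factor the Schur complement $D-CA^{-1}B$, not $D$.
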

As an immediate consequence, we have the following lemma.
\begin{lemm}\label{corpsi}
For $p, i\geqslant 1$, we have
\begin{gather*}
    \psi_{p}:~~k_i^{\pm}(z)\mapsto k_{p+i}^{\pm}(z),\quad e_{i}^{\pm}(z)\mapsto e_{p+i}^{\pm}(z),\quad f_{i}^{\pm}(z)\mapsto f_{p+i}^{\pm}(z).
\end{gather*}
\end{lemm}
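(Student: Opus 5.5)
The plan is to read off the images of the Gaussian generators directly from Lemma \ref{lemma syl}, using the defining formulas of $k_i^{\pm}$, $e_i^{\pm}$, $f_i^{\pm}$ as quasideterminants together with the heredity (Sylvester-type) property of quasideterminants. Write $\mathcal{L}^{\pm}(z)$ for the matrix in $U_q^{R}(\widehat{\mathfrak{gl}(m+p|n)})$ with entries $\psi_p(l^{\pm}_{ij}(z))$, $1\leqslant i,j\leqslant m+n$. Since $\psi_p$ is a homomorphism, applying it to the quasideterminant formula for $k_i^{\pm}(z)$ in $U_q^{R}(\widehat{\mathfrak{gl}(m|n)})$ gives the $(i,i)$-quasideterminant of the leading $i\times i$ block of $\mathcal{L}^{\pm}(z)$. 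The images $\psi_p(e_i^{\pm}(z))$ and $\psi_p(f_i^{\pm}(z))$ are obtained in the same way.

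By Lemma \ref{lemma syl}, each entry of $\mathcal{L}^{\pm}(z)$ is itself the quasideterminant of the bordered matrix whose rows are $1,\dots,p,p+i$ and whose columns are $1,\dots,p,p+j$ of the full generator matrix. Consider the submatrix $A$ of $L^{\pm}(z)$ on rows and columns $\{1,\dots,p\}\cup\{p+1,\dots,p+i\}$. The key step is the Sylvester identity for quasideterminants \cite{GGRW05}. It says that the quasideterminant of the matrix of such bordered quasideterminants, all taken relative to the common $p\times p$ block $A_0=(l^{\pm}_{ab}(z))_{a,b\leqslant p}$, equals the corresponding quasideterminant of the large matrix $A$. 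Hence
\[
\psi_p(k_i^{\pm}(z))=|A|_{p+i,p+i}=k^{\pm}_{p+i}(z).
\]
For $e_i^{\pm}$ and $f_i^{\pm}$ we apply the same identity to the matrices in which the last row (respectively column) is replaced by row $p+i+1$ (respectively column $p+i+1$). This gives the quasideterminants appearing in the definitions of $e^{\pm}_{p+i+1,p+i}(z)$ and $f^{\pm}_{p+i,p+i+1}(z)$, before the factor $k^{\pm}_{p+i}(z)^{-1}$ is applied. Multiplying on the right or left by $\psi_p(k_i^{\pm}(z))^{-1}=k_{p+i}^{\pm}(z)^{-1}$ then yields $e_{p+i}^{\pm}(z)$ and $f_{p+i}^{\pm}(z)$.

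An alternative is to avoid the Sylvester identity and use uniqueness of the Gauss decomposition \eqref{Gauss_dec}. Writing $L^{\pm}(z)$ in block form with the $p\times p$ block $A_0$, the matrix of quasideterminants in Lemma \ref{lemma syl} is exactly the Schur complement $D-CA_0^{-1}B$. The Gauss factors of $L^{\pm}(z)$ restricted to indices $>p$ give a Gauss decomposition of this Schur complement. Uniqueness then identifies its Gaussian generators as $k_{p+i}^{\pm}$, $e^{\pm}_{p+j,p+i}$ and $f^{\pm}_{p+i,p+j}$, which again gives the claim.

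The main obstacle is signs. One must check that the supergraded setting does not introduce parity factors in the Schur complement and quasideterminant identities; the identities themselves hold over any ring. Because the matrices are treated as ordinary matrices with entries in a superalgebra, as in \eqref{Gauss_dec}, no signs should arise. It should be enough to confirm that Lemma \ref{lemma syl} is stated in that same ungraded matrix convention.
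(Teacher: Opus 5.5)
Your proposal is correct and is essentially the argument the paper intends: the paper records this lemma as an immediate consequence of Lemma \ref{lemma syl}, and the step it leaves implicit is exactly the quasideterminant Sylvester identity (equivalently, your Schur-complement and uniqueness-of-Gauss-decomposition argument) that you spell out. Your remark that no parity signs enter is also right, because Lemma \ref{lemma syl} and \eqref{Gauss_dec} are both stated as ordinary matrix identities with entries in the superalgebra, and $\psi_p$, being a composite of two anti-isomorphisms and a homomorphism, is an honest algebra homomorphism.
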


\subsection{Surjective homomorphism between two presentations}
We consider $N=2$ first. 
For the case $(m,n)=(2,0)$, the relations between Gaussian generators are the same as in the case $N=2$ in \cite{DF93}. By the superalgebra isomorphism $\zeta_{2|0}$ defined in Proposition \ref{m to n}, we can get the relations between Gaussian generators for the case $(m,n)=(0,2)$. Using equations \eqref{RLL2.1}--\eqref{RLL2.7}, and by calculations similar to those for the non-super case \cite{DF93}, we also can deduce the relations between Gaussian generators for the case $m=1,n=1$ (See the appendix \ref{appe:mn11} for details).
\begin{align}\label{RLL2.8}
 k_i^{ \pm}(z) k_j^{ \pm}(w)&=k_j^{ \pm}(w) k_i^{ \pm}(z), \quad i,j=1,2,
\\
k_1^{+}(z) k_1^{-}(w)&=k_1^{-}(w) k_1^{+}(z),
\\ \label{RLL2.10}
\frac{w_{-} q-q^{-1} z_{+}}{z_{+} q-w_{-} q^{-1}} k_2^{+}(z) k_2^{-}(w)
&=\frac{w_{+} q-q^{-1} z_{-}}{z_{-} q-w_{+} q^{-1}} k_2^{-}(w) k_2^{+}(z),\\ \label{RLL2.12}
\frac{z_{ \pm}-w_{\mp}}{z_{ \pm} q-w_{\mp} q^{-1}} k_2^{\mp}(w)^{-1} k_1^{ \pm}(z)
&=k_1^{ \pm}(z) k_2^{\mp}(w)^{-1} \frac{z_{\mp}-w_{ \pm}}{z_{\mp} q-w_{ \pm}q^{-1}},\\
k_{1}^{\pm}(z)^{-1}x_{1}^{-}(w)k_{1}^{\pm}(z)&=\frac{z_{\mp}q-wq^{-1}}{z_{\mp}-w}x_{1}^{-}(w),\\
k_{1}^{\pm}(z)x_{1}^{+}(w)k_{1}^{\pm}(z)^{-1}&=\frac{z_{\pm}q-wq^{-1}}{z_{\pm}-w}x_{1}^{+}(w),\\ \label{X1}
x_{1}^{\pm}(z)x_{1}^{\pm}(w)&+x_{1}^{\pm}(w)x_{1}^{\pm}(z)=0,\\
k_{2}^{\pm}(z)^{-1}x_{1}^{-}(w)k_{2}^{\pm}(z)&=\frac{z_{\mp}q-wq^{-1}}{z_{\mp}-w}x_{1}^{-}(w),\\
k_{2}^{\pm}(z)x_{1}^{+}(w)k_{2}^{\pm}(z)^{-1}&=\frac{z_{\pm}q-wq^{-1}}{z_{\pm}-w}x_{1}^{+}(w),
\end{align}
\begin{equation}
\begin{aligned}
{[x_{1}^{+}(z),x_{1}^{-}(w)]}&=(q-q^{-1})\{\delta(z^{-1}wq^{c})k_2^{+}(w_{+})
k_1^{+}(w_{+})^{-1}\\
&-\delta(z^{-1}wq^{-c})k_2^{-}(z_{+})k_1^{-}(z_{+})^{-1}\}.
\end{aligned}
\end{equation}
Thus, we have shown that the map $\varphi$ is a homomorphism when $N=2$. The surjectivity of $\varphi$ is straightforward. These results constitute the initial step towards the computations for general $N$.

We begin the proof that $\varphi$ is a surjective homomorphism for $N=3$. By Proposition \ref{m to n} and Lemma \ref{corpsi}, the relations among $k_2^{ \pm}(z), k_3^{ \pm}(z), e_2^{ \pm}(z), f_2^{ \pm}(z)$ can be established, and they coincide with those in $N=2$ case. It remains to derive the relations between the sets $\{k_1^{ \pm}(z)$, $e_1^{ \pm}(z)$, $f_1^{ \pm}(z)\}$ and $\{k_3^{ \pm}(z), e_2^{ \pm}(z)$, $f_2^{ \pm}(z)\}$. To this end, we write $L^{ \pm}(z)$ and $L^{ \pm}(z)^{-1}$ in the following forms:
$$
L^{ \pm}(z)=\left(\begin{array}{ccc}
k_1^{ \pm}(z) & k_1^{ \pm}(z) f_1^{ \pm}(z) & k_{1}^{\pm}(z)f_{1,3}^{\pm}(z)
\\
e_1^{ \pm}(z) k_1^{ \pm}(z) & * & *
\\
e_{3,1}^{ \pm}(z) k_1^{ \pm}(z) & * & *
\end{array}\right),
$$
Let
\begin{align*}
&x^{\pm}=k_{3}^{\pm}(w)^{-1}(-e_{3,1}(w)+e_{2}^{\pm}(w)e_{1}^{\pm}(w)),\\
&y^{\pm}=(-f_{1,3}^{\pm}(w)+f_{1}^{\pm}(w)f_{2}^{\pm}(w))k_{3}^{\pm}(w)^{-1},
\end{align*}
then
$$
L^{ \pm}(w)^{-1}=\left(\begin{array}{ccc}
* & * & y^{\pm}
\\
* & * & -f_2^{ \pm}(w) k_3^{ \pm}(w)^{-1}
\\
x^{\pm} & -k_3^{ \pm}(w)^{-1} e_2^{ \pm}(w) & k_3^{ \pm}(w)^{-1}
\end{array}\right).
$$
where $*$ represent some elements in the $U_q^{R}(\widehat{\mathfrak{g}})$.

For convenience, we rewrite \eqref{RLL2.5}--\eqref{RLL2.7} explicitly as
\begin{equation*}
\left(L_1^{ \pm}(w)^{-1}\right)_{j_1 i_2}^{i_1 i_2} R_{21}\left(zw^{-1}\right)_{k_1 j_2}^{j_1 i_2}\theta_{k_1j_2}^{k_1j_2} L_2^{ \pm}(z)_{k_1 k_2}^{k_1 j_2}\theta_{k_1k_2}^{k_1k_2}=\theta_{i_1i_2}^{i_1i_2}L_2^{ \pm}(z)_{i_1 j_2}^{i_1 i_2}\theta_{i_1j_2}^{i_1j_2} R_{21}\left(zw^{-1}\right)_{j_1 k_2}^{i_1 j_2}\left(L_1^{ \pm}(w)^{-1}\right)_{k_1 k_2}^{j_1 k_2},
\end{equation*}
where $i_1,i_2,k_1,k_2$ are free indices, summations over $j_1,j_2$ are assumed. By taking special values of $i_1,i_2,k_1,k_2$, we obtain the following relations:
\begin{gather}\label{k1k3 rel1}
k_1^{ \pm}(z) k_3^{ \pm}(w)=k_3^{ \pm}(w) k_1^{ \pm}(z),
\\ \label{k1k3 rel2}
\frac{z_{ \pm}-w_{\mp}}{z_{ \pm} q-w_{\mp} q^{-1}} k_3^{\mp}(w)^{-1} k_1^{ \pm}(z)=k_1^{ \pm}(z) k_3^{\mp}(w)^{-1} \frac{z_{\mp}-w_{ \pm}}{z_{\mp} q-w_{ \pm} q^{-1}},
\\ \label{k3e1 rel1}
e_1^{\epsilon}(z) k_3^{\epsilon'}(w)=k_3^{\epsilon'}(w) e_1^{ \epsilon}(z),
\\
k_3^{\epsilon}(w) f_1^{\epsilon'}(z)=f_1^{\epsilon'}(z) k_3^{ \epsilon}(w),
\\
k_1^{\epsilon}(w) f_2^{\epsilon'}(z)=f_2^{\epsilon'}(z) k_1^{ \epsilon}(w),
\\
e_2^{ \epsilon}(w) k_1^{ \epsilon'}(z)=k_1^{ \epsilon'}(z) e_2^{ \epsilon}(w),
\\
e_2^{ \epsilon}(w) f_1^{ \epsilon'}(z)=f_1^{ \epsilon'}(z) e_2^{ \epsilon}(w),
\\
f_2^{ \epsilon}(w) e_1^{ \epsilon'}(z)=e_1^{ \epsilon'}(z) f_2^{ \epsilon}(w),
\end{gather}
where $\epsilon, \epsilon'\in\{+, -\}$. In addition, let $i_1=3, i_2=2, k_1=2, k_2=1$, we have
\begin{align}\notag
&-(-1)^{\delta_{m\leq 1}}\frac{(z-w)}{z q-wq^{-1}}e_1^{ \pm}(z)k_1^{ \pm}(z) k_3^{ \pm}(w)^{-1} e_2^{\pm}(w)
\\  \label{X1X2 pre rel1}
&=\frac{w\left(q-q^{-1}\right)}{zq-wq^{-1}}k_3^{ \pm}(w)^{-1}\left[-e_{3,1}^{\pm}(w)
+e_2^{\pm}(w) e_1^{ \pm}(w)\right]k_1^{ \pm}(z)
\\ \notag
& \quad-d\left(zw^{-1}\right) k_3^{ \pm}(w)^{-1} e_2^{ \pm}(w) e_1^{ \pm}(z) k_1^{ \pm}(z)+\frac{z\left(q-q^{-1}\right)}{z q-w q^{-1}} k_3^{ \pm}(w)^{-1} e_{3,1}^{ \pm}(z) k_1^{ \pm}(z),
\end{align}
and
\begin{align}\notag
& -(-1)^{\delta_{m\leq 1}}\frac{(z_{\mp}-w_{ \pm})}{z_{\mp} q-w_{ \pm} q^{-1}} e_1^{ \pm}(z) k_1^{ \pm}(z) k_3^{\mp}(w)^{-1} e_2^{\mp}(w)
\\ \label{X1X2 pre rel2}
&=\frac{w_{\mp}\left(q-q^{-1}\right)}{z_{ \pm}q-w_{\mp}q^{-1}} k_3^{\mp}(w)^{-1}
\left[-e_{3,1}^{\mp}(w)+e_2^{\mp}(w) e_1^{\mp}(w)\right] k_1^{ \pm}(z)
\\ \notag
& \quad-d\left(z_{ \pm}w_{\mp}^{-1}\right) k_3^{\mp}(w)^{-1} e_2^{\mp}(w) e_1^{ \pm}(z) k_1^{ \pm}(z)+\frac{z_{ \pm}\left(q-q^{-1}\right)}{z_{ \pm} q-w_{\mp} q^{-1}} k_3^{\mp}(w)^{-1} e_{3,1}^{ \pm}(z) k_1^{ \pm}(z).
\end{align}
Here, $d(zw^{-1})=1$ for the case $m=2$ or $m=3$, $d(zw^{-1})=\frac{wq-zq^{-1}}{z q-w q^{-1}}$ for the case $m=1$ or $m=0$.

By calculations similar to the Appendix in \cite{FHS97}, we get that
\begin{equation*}
(z-w)x_1^{+}(z) x_2^{+}(w)=(-1)^{\delta_{m\leq 1}}d\left(zw^{-1}\right)(zq-w q^{-1})x_2^{+}(w) x_1^{+}(z).
\end{equation*}
Similarly, we can show that $x_{1}^{-}(z)$ and $x_{2}^{-}(w)$ satisfy the following commutation relations.
\begin{equation*}
(-1)^{\delta_{m\leq 1}}d\left(zw^{-1}\right)x_1^{-}(z) x_2^{-}(w)=(z-w)x_2^{-}(w) x_1^{-}(z).
\end{equation*}
That is,
\begin{gather}\label{X1X2 rel1}
(z-w)x_1^{+}(z) x_2^{+}(w)=\left(z q-w q^{-1}\right) x_2^{+}(w) x_1^{+}(z), \quad m=2, 3
\\ \label{X1X2 rel2}
(z-w)x_1^{+}(z) x_2^{+}(w)=\left(z q^{-1}-w q\right) x_2^{+}(w) x_1^{+}(z), \quad m=1, 0,\\ \label{X1X2 rel3}
\left(z q-w q^{-1}\right) x_1^{-}(z) x_2^{-}(w)=(z-w) x_2^{-}(w) x_1^{-}(z), \quad m=2, 3,
\\ \label{X1X2 rel4}
\left(z q^{-1}-w q\right) x_1^{-}(z) x_2^{-}(w)=(z-w) x_2^{-}(w) x_1^{-}(z), \quad m=1, 0.
\end{gather}

Since there are twice as many group-like generators, controlling the additional terms in the quantum $R$-matrix requires more effort. Consequently, more Serre relations arise from the $RLL$ relations for $N=3$. In the case $(m,n)=(3,0)$, the serre relations are the same as in the case $N=3$ in \cite{DF93}. By the superalgebra isomorphism $\zeta_{3|0}$ defined in Proposition \ref{m to n}, we can immediately get the serre relations in the case $(m,n)=(0,3)$. Using equations \eqref{X1} and \eqref{X1X2 rel1}--\eqref{X1X2 rel4}, we arrive at the serre relations in the cases $(m,n)=(2,1)$ and $(m,n)=(1,2)$.

Case $m=2$ :
\begin{equation}
\begin{aligned}\label{ser1 m=2}
&\left\{x_1^{\pm}\left(z_1\right)x_1^{\pm}\left(z_2\right)x_2^{\pm}(w)-\left(q+q^{-1}\right)
x_1^{\pm}\left(z_1\right) x_2^{\pm}(w) x_1^{\pm}\left(z_2\right)\right.
\\
& \left.\qquad+\,x_2^{\pm}(w) x_1^{\pm}\left(z_1\right) x_1^{\pm}\left(z_2\right)\right\}+\left\{z_1 \leftrightarrow z_2\right\}=0,
\end{aligned}
\end{equation}

Case $m=1$ :
\begin{equation}
\begin{aligned}\label{ser2 m=1}
&\left\{x_2^{\pm}\left(z_1\right)x_2^{\pm}\left(z_2\right)x_1^{\pm}(w)-\left(q+q^{-1}\right)
x_2^{\pm}\left(z_1\right) x_1^{\pm}(w) x_2^{\pm}\left(z_2\right)\right.
\\
&\left.\qquad+\,x_1^{\pm}(w) x_2^{\pm}\left(z_1\right) x_2^{\pm}\left(z_2\right)\right\}+\left\{z_1 \leftrightarrow z_2\right\}=0,
\end{aligned}
\end{equation}
The verifications of \eqref{ser1 m=2} and \eqref{ser2 m=1} are similar to the case $N=3$ in \cite{DF93}. See the appendix \ref{serre 12} for more details.

This completes the proof that $\varphi$ is a homomorphism for $N=3$. As seen in the proof, The surjectivity of $\varphi$ follows from the fact that $f_{1,3}^{\pm}(z)$ and $e_{3,1}^{\pm}(z)$ are generated by $k_{1}^{\pm}(z)$, $k_{2}^{\pm}(z)$, $k_{3}^{\pm}(z)$, $f_{1}^{\pm}(z)$, $e_{1}^{\pm}(z)$, $f_{2}^{\pm}(z)$, and $e_{2}^{\pm}(z)$.

Now we proceed to the proof for general $N$. Arguing by induction, we assume all relations for the $N-1$ cases are established. As in the $N=3$ case, Proposition \ref{m to n} and Lemma \ref{corpsi} imply that it is sufficient to verify the relations between the generators $k_1^{ \pm}(z), e_1^{ \pm}(z), f_1^{ \pm}(z)$ and $k_N^{ \pm}(z)$, $e_{N-1}^{ \pm}(z), f_{N-1}^{ \pm}(z)$. Using relations \eqref{RLL2.5}--\eqref{RLL2.7}, we have
\begin{align}
k_1^{ \pm}(z) k_N^{ \pm}(w)&=k_N^{ \pm}(w) k_1^{ \pm}(z),
\\
\frac{z_{\pm}-w_{\mp}}{z_{\pm}q-w_{\mp}q^{-1}} k_N^{\mp}(w)^{-1} k_1^{\pm}(z)
&=\frac{z_{\mp}-w_{\pm}}{z_{\mp} q-w_{ \pm} q^{-1}} k_1^{ \pm}(z)k_N^{\mp}(w)^{-1},
\\
k_1^{\epsilon}(z) e_{N-1}^{\epsilon'}(w)&=e_{N-1}^{\epsilon'}(w) k_1^{\epsilon}(z),
\\
k_1^{\epsilon}(z) f_{N-1}^{\epsilon'}(w)&=f_{N-1}^{\epsilon'}(w) k_1^{\epsilon}(z),
\\
k_N^{\epsilon}(z) e_1^{\epsilon'}(w)&=e_1^{\epsilon'}(w) k_N^{\epsilon}(z),
\\
k_N^{\epsilon}(z) f_1^{\epsilon'}(w)&=f_1^{\epsilon}(w) k_N^{ \pm}(z),
\\
f_1^{\epsilon}(z) e_{N-1}^{\epsilon'}(w)&=e_{N-1}^{\epsilon'}(w) f_1^{\epsilon}(z),
\\
e_1^{\epsilon}(z) f_{N-1}^{\epsilon'}(w)&=f_{N-1}^{\epsilon'}(w) e_1^{\epsilon}(z),
\\ \label{fN}
f_1^{\epsilon}(z) f_{N-1}^{\epsilon'}(w)&=f_{N-1}^{\epsilon'}(w) f_1^{\pm}(z),
\\ \label{eN}
e_1^{\epsilon}(z) e_{N-1}^{\epsilon'}(w)&=e_{N-1}^{\epsilon'}(w) e_1^{\epsilon}(z).
\end{align}
Here $\epsilon, \epsilon'\in\{+,-\}$. From \eqref{fN} and \eqref{eN}, we deduce that
\begin{equation}\label{xixj}
x_{i}^{\epsilon}(z)x_{j}^{\epsilon'}(w)=x_{j}^{\epsilon'}(w)x_{i}^{\epsilon}(z),\quad~ \mid i-j\mid>1.
\end{equation}

Otherwise, Using Proposition \ref{m to n} and Lemma \ref{corpsi} to the relations \eqref{X1} and \eqref{X1X2 rel1}--\eqref{X1X2 rel4}, we can get
\begin{align}\label{Xm}
&\left[x_m^{ \pm}(z), x_m^{ \pm}(w)\right]=0,
\\ \label{Xm+}
\left(z-w\right)x_i^{+}(z) x_{i+1}^{+}(w)&=\left(z q_{i+1}-w q_{i+1}^{-1}\right) x_{i+1}^{+}(w) x_i^{+}(z), \quad i=m-1,m,
\\ \label{Xm-}
\left(z q_{i+1}-w q_{i+1}^{-1}\right) x_i^{-}(z)& x_{i+1}^{-}(w)=(z-w) x_{i+1}^{-}(w) x_i^{-}(z), \quad i= m-1,m.
\end{align}
Next, we will verify the serre relation \eqref{serre 2}, that is
\begin{equation}
\begin{aligned}\label{serre +}
&x_{m}^{+}(z_1)x_{m+1}^{+}(z)x_{m}^{+}(z_2)x_{m-1}^{+}(w)-(q+q^{-1})x_{m}^{+}(z_1)
x_{m+1}^{+}(z)x_{m-1}^{+}(w)x_{m}^{+}(z_2)\\
+~&x_{m}^{+}(z_1)
x_{m-1}^{+}(w)x_{m}^{+}(z_2)x_{m+1}^{+}(z)
+x_{m+1}^{+}(z)x_{m}^{+}(z_2)x_{m-1}^{+}(w)x_{m}^{+}(z_1)\\
+~&x_{m-1}^{+}(w)
x_{m}^{+}(z_2)x_{m+1}^{+}(z)x_{m}^{+}(z_1)
+\{z_1\leftrightarrow z_2\}=0.
\end{aligned}
\end{equation}
and
\begin{equation}
\begin{aligned}\label{serre -}
&x_{m}^{-}(z_1)x_{m+1}^{-}(z)x_{m}^{-}(z_2)x_{m-1}^{-}(w)-(q+q^{-1})x_{m}^{-}(z_1)
x_{m+1}^{-}(z)x_{m-1}^{-}(w)x_{m}^{-}(z_2)\\
+~&x_{m}^{-}(z_1)
x_{m-1}^{-}(w)x_{m}^{-}(z_2)x_{m+1}^{-}(z)
+x_{m+1}^{-}(z)x_{m}^{-}(z_2)x_{m-1}^{-}(w)x_{m}^{-}(z_1)\\
+~&x_{m-1}^{-}(w)
x_{m}^{-}(z_2)x_{m+1}^{-}(z)x_{m}^{-}(z_1)
+\{z_1\leftrightarrow z_2\}=0.
\end{aligned}
\end{equation}

We now consider \eqref{serre +}. By \eqref{xixj}--\eqref{Xm-}, we have
\begin{align*}
&x_{m}^{+}(z_1)x_{m+1}^{+}(z)x_{m-1}^{+}(w)x_{m}^{+}(z_2)=\frac{wq-z_{2}q^{-1}}
{w-z_{2}}x_{m}^{+}(z_1)x_{m+1}^{+}(z)x_{m}^{+}(z_2)x_{m-1}^{+}(w),\\
&x_{m}^{+}(z_1)x_{m-1}^{+}(w)x_{m}^{+}(z_2)x_{m+1}^{+}(z)=\frac{wq-z_{2}q^{-1}}
{w-z_{2}}\frac{z_2q^{-1}-zq}{z_2-z}x_{m}^{+}(z_1)x_{m+1}^{+}(z)x_{m}^{+}(z_2)
x_{m-1}^{+}(w),\\
&x_{m+1}^{+}(z)x_{m}^{+}(z_1)x_{m-1}^{+}(w)x_{m}^{+}(z_2)=\frac{z_1-z}
{z_1q^{-1}-zq}\frac{wq-z_{2}q^{-1}}{w-z_{2}}x_{m}^{+}(z_1)x_{m+1}^{+}(z)x_{m}^{+}(z_2)
x_{m-1}^{+}(w),\\
&x_{m-1}^{+}(w)x_{m}^{+}(z_1)x_{m+1}^{+}(z)x_{m}^{+}(z_2)=\frac{wq-z_{1}q^{-1}}
{w-z_{1}}\frac{wq-z_{2}q^{-1}}{w-z_{2}}x_{m}^{+}(z_1)x_{m+1}^{+}(z)x_{m}^{+}(z_2)
x_{m-1}^{+}(w),\\
&x_{m}^{+}(z_2)x_{m+1}^{+}(z)x_{m}^{+}(z_1)x_{m-1}^{+}(w)=-\frac{z_2q^{-1}-zq}
{z_2-z}\frac{z_1-z}{z_1q^{-1}-zq}x_{m}^{+}(z_1)x_{m+1}^{+}(z)x_{m}^{+}(z_2)
x_{m-1}^{+}(w),\\
&x_{m}^{+}(z_2)x_{m-1}^{+}(w)x_{m}^{+}(z_1)x_{m+1}^{+}(z)=-\frac{z_2q^{-1}-zq}
{z_2-z}\frac{wq-z_{1}q^{-1}}{w-z_{1}}x_{m}^{+}(z_1)x_{m+1}^{+}(z)x_{m}^{+}(z_2)
x_{m-1}^{+}(w),\\
&x_{m+1}^{+}(z)x_{m}^{+}(z_2)x_{m-1}^{+}(w)x_{m}^{+}(z_1)=-\frac{wq-z_{1}q^{-1}}
{w-z_{1}}\frac{z_1-z}{z_1q^{-1}-zq}x_{m}^{+}(z_1)x_{m+1}^{+}(z)x_{m}^{+}(z_2)
x_{m-1}^{+}(w),\\
&x_{m}^{+}(z_2)x_{m+1}^{+}(z)x_{m-1}^{+}(w)x_{m}^{+}(z_1)\\
=&-\frac{wq-z_{1}q^{-1}}{w-z_{1}}\frac{z_1-z}{z_1q^{-1}-zq}\frac{z_2q^{-1}-zq}{z_2-z}
x_{m}^{+}(z_1)x_{m+1}^{+}(z)x_{m}^{+}(z_2)x_{m-1}^{+}(w),\\
&x_{m-1}^{+}(w)x_{m}^{+}(z_2)x_{m+1}^{+}(z)x_{m}^{+}(z_1)\\
=&-\frac{z_1-z}{z_1q^{-1}-zq}\frac{wq-z_{1}q^{-1}}{w-z_{1}}\frac{z_2q^{-1}-zq}{z_2-z}
\frac{wq-z_{2}q^{-1}}{w-z_{2}}x_{m}^{+}(z_1)x_{m+1}^{+}(z)x_{m}^{+}(z_2)
x_{m-1}^{+}(w).
\end{align*}
Hence, each summand in relation \eqref{serre +} can be replaced by $x_{m}^{+}(z_1)x_{m+1}^{+}(z)x_{m}^{+}(z_2)x_{m-1}^{+}(w)$. A direct computation then shows that their coefficients sum to zero. Therefore, the relation \eqref{serre +} is true. Similarly, we can verify that the relation \eqref{serre -} also holds.

Therefore we proved that $\varphi$ is a homomorphism. To establish the surjectivity of $\varphi$, it suffices to show that $e_{N,1}^{\pm}(z)$ and $f_{1,N}^{\pm}(z)$ are generated by $k_{i}^{\pm}(z)$, $e_{i}^{\pm}(z)$ and $f_{i}^{\pm}(z)$, since by the induction hypothesis all other elements $e_{i,j}^{\pm}(z)$ and $f_{i,j}^{\pm}(z)$ are already generated by these \hongda{generators}. From \eqref{RLL2.5} and \eqref{RLL2.6}, we derive the relations between $e_{N-1,1}^{\pm}(z)$ and $e_{N,N-1}^{\pm}(z)$, and the relations between $f_{1,N-1}^{\pm}(z)$ and $f_{N-1,N}^{\pm}(z)$, which also involve $e_{N,1}^{\pm}(z)$ and $f_{1,N}^{\pm}(z)$. These relations are analogous to \eqref{X1X2 pre rel1} and \eqref{X1X2 pre rel2} in the $N=3$ case. They imply that $f_{1,N}^{\pm}(z)$ and $e_{N,1}^{\pm}(z)$ are generated by $k_{i}^{\pm}(z)$, $e_{i}^{\pm}(z)$ and $f_{i}^{\pm}(z)$, Consequently, the algebra $U_q^{R}(\widehat{\mathfrak{g}})$ is generated by $k_{i}^{\pm}(z)$, $e_{i}^{\pm}(z)$ and $f_{i}^{\pm}(z)$. So $\varphi$ is surjective.

\subsection{Injectivity of $\varphi$}

Let $\mathbb{A}$ be the localization of $\mathbb{C}[q]$ at the ideal $(q-1)$. To be more exact,
\begin{gather*}
\mathbb{A}=\Bigg\{\,\frac{f(q)}{g(q)}\,\Bigg|\,f(q),g(q)\in \mathbb{C}[q],\ g(1)\neq 0\,\Bigg\}.
\end{gather*}
\hongda{Before we demonstrate that the surjective homomorphism $\varphi$ is an injection, we need to review the $\mathbb{A}$-form for $U_q^{R}(\mathcal{L}\mathfrak{gl}(m|n))$ and $U_q^{D}(\mathcal{L}\mathfrak{gl}(m|n))$}.

\subsubsection{$\mathbb{A}$-form for $U_q^{R}(\mathcal{L}\mathfrak{gl}(m|n))$}
\begin{defi}
The $\mathbb{A}$-form of ~$U_q^{R}(\mathcal{L}\mathfrak{gl}(m|n))$, denote ~$U_{\mathbb{A}}^{R}$, is an $\mathbb{A}$-sub-superalgebra with the generators ${\gamma_{ij}^{+}}^{(r)},{\gamma_{ij}^{-}}^{(r)}$ given by
\begin{equation*}
{\gamma_{ij}^{+}}^{(r)}=\begin{cases}
\displaystyle\frac{{l_{ii}^{+}}^{(0)}-1}{q-1}, &\textrm{if}~~i=j,\ r=0,\\
& \\
\displaystyle\frac{{l_{ij}^{+}}^{(r)}}{q-q^{-1}}, &\textrm{otherwise},
\end{cases}\quad
{\gamma_{ij}^{-}}^{(r)}=\begin{cases}
\displaystyle\frac{{l_{ii}^{-}}^{(0)}-1}{q-1}, &\textrm{if}~~i=j,\ r=0,\\
& \\
\displaystyle\frac{{l_{ij}^{-}}^{(r)}}{q-q^{-1}}, &\textrm{otherwise}.
\end{cases}
\end{equation*}
\end{defi}

Note that all elements ${l_{ij}^{+}}^{(r)}$, ${l_{ij}^{-}}^{(r)}\in U_{\mathbb{A}}^{R}$. The diagonal entries in $U_{\mathbb{A}}^{R}$ satisfy
\begin{gather}\label{modulo:1}
{\gamma_{ii}^{+}}^{(0)}+{\gamma_{ii}^{-}}^{(0)}=-(q-1){\gamma_{ii}^{+}}^{(0)}
{\gamma_{ii}^{-}}^{(0)}\in (q-1)U_{\mathbb{A}}^{R},
\end{gather}
and
\begin{gather}\label{modulo:2}
{l_{ii}^{+}}^{(0)}-1=(q-1){\gamma_{ii}^{+}}^{(0)},\quad {l_{ii}^{-}}^{(0)}-1=(q-1){\gamma_{ii}^{-}}^{(0)}\in (q-1)U_{\mathbb{A}}^{R}.
\end{gather}

The defining relations for $\mathrm{U}_{\mathbb{A}}^{R}$ can be derived from the relations for $U_q^{R}(\mathcal{L}\mathfrak{gl}(m|n))$. More precisely, the superalgebra $U_{\mathbb{A}}^{R}$ admits the defining relations
\begin{align}\label{UA:1}
&{\gamma_{ii}^{+}}^{(0)}+{\gamma_{ii}^{-}}^{(0)}+(q-1){\gamma_{ii}^{+}}^{(0)}
{\gamma_{ii}^{-}}^{(0)}=0,\ \text{for }i\in I,
\\ \label{UA:2}
&{\gamma_{ji}^{+}}^{(0)}={\gamma_{ij}^{-}}^{(0)}=0,\ \textrm{for}~~1\leqslant i<j\leqslant m+n,
\end{align}
together with the substitutions that replace ${l_{ij}^{+}}^{(r)}$ and ${l_{ij}^{-}}^{(r)}$ by $\left(q-q^{-1}\right){\gamma_{ij}^{+}}^{(r)}$ and $\left(q-q^{-1}\right){\gamma_{ij}^{-}}^{(r)}$, respectively, for all $i,j\in I$, $r\geqslant 0$, except that for $i=j,r=0$, the elements ${l_{ii}^{+}}^{(0)}$ and ${l_{ii}^{-}}^{(0)}$ are replaced by  $(q-1){\gamma_{ii}^{+}}^{(0)}+1$ and $(q-1){\gamma_{ii}^{-}}^{(0)}+1$.

Similar to Proposition \ref{base}, the algebra $U_{\mathbb{A}}^{R}$ is spanned by the ordered monomials \eqref{base:Uq} in generators ${\gamma_{ij}^{+}}^{(r)}$ and ${\gamma_{ij}^{-}}^{(r)}$,  subject to the conditions \eqref{base:1} and \eqref{base:2} as an $\mathbb{A}$-linear superspace.

Let $\overline{U_{\mathbb{A}}^{R}}=U_{\mathbb{A}}^{R}/(q-1)U_{\mathbb{A}}^{R}$. If we denote the image of ${\gamma_{ij}^{\pm}}^{(r)}$ in the quotient again by ${\gamma_{ij}^{\pm}}^{(r)}$, then these elements generate the algebra $\overline{U_{\mathbb{A}}^{R}}$.
\begin{lemm}
In the algebra $\overline{U_{\mathbb{A}}^{R}}$, we have
\begin{equation*}
{\gamma^{\pm}_{ii}}^{(0)}= \frac{k_{i,0}^{\pm}-1}{q-1},\quad
{\gamma^{\pm}_{ii}}^{(r)}= \frac{k_{i,\pm r}^{\pm}}{q-q^{-1}},\quad r>0;
\end{equation*}
\begin{equation*}
{\gamma^{+}_{ij}}^{(0)}=\frac{{f_{ij}^{+}}^{(0)}}{q-q^{-1}}, \quad{\gamma^{-}_{ij}}^{(0)} = 0,\quad{\gamma^{\pm}_{ij}}^{(r)}= \frac{{f_{ij}^{\pm}}^{(r)}}{q-q^{-1}},\quad r>0 ,\quad i<j;
\end{equation*}
\begin{equation*}
{\gamma^{+}_{ji}}^{(0)}=0,\quad {\gamma^{-}_{ji}}^{(0)}= \frac{{e_{ji}^{-}}^{(0)}}{q-q^{-1}},\quad {\gamma^{\pm}_{ji}}^{(r)}= \frac{{e_{ji}^{\pm}}^{(r)}}{q-q^{-1}}, \quad r>0 ,\quad i<j ;
\end{equation*}
\end{lemm}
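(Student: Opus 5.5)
The plan is to read the identities off the Gauss decomposition formulas \eqref{Gauss ii}--\eqref{Gauss ji}, working modulo $(q-1)U_{\mathbb{A}}^{R}$. The statement should be read as follows: the elements $\frac{k_{i,0}^{\pm}-1}{q-1}$, $\frac{k_{i,\pm r}^{\pm}}{q-q^{-1}}$, $\frac{{f_{ij}^{\pm}}^{(r)}}{q-q^{-1}}$ and $\frac{{e_{ji}^{\pm}}^{(r)}}{q-q^{-1}}$ lie in $U_{\mathbb{A}}^{R}$, and their images in $\overline{U_{\mathbb{A}}^{R}}$ agree with those of the corresponding $\gamma$'s. Throughout, $q-q^{-1}=(q-1)(1+q^{-1})$ with $1+q^{-1}$ a unit in $\mathbb{A}$, so the two normalizations differ only by units.

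\emph{Step 1 (constant terms).} Since ${l_{ji}^{+}}^{(0)}=0$ for $j>i$, the matrix $L^{+}(z)$ is upper triangular at $z^{0}$. Reading off the $z^{0}$ coefficient of \eqref{Gauss ii}--\eqref{Gauss ji} gives:
\begin{itemize}
\item $k_{i,0}^{+}={l_{ii}^{+}}^{(0)}$;
\item ${e_{ji}^{+}}^{(0)}=0$;
\item $k_{i,0}^{+}{f_{ij}^{+}}^{(0)}={l_{ij}^{+}}^{(0)}$.
\end{itemize}
Hence $\frac{k_{i,0}^{+}-1}{q-1}={\gamma_{ii}^{+}}^{(0)}$ exactly. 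Moreover ${f_{ij}^{+}}^{(0)}=({l_{ii}^{+}}^{(0)})^{-1}{l_{ij}^{+}}^{(0)}={l_{ii}^{-}}^{(0)}(q-q^{-1}){\gamma_{ij}^{+}}^{(0)}$. Since ${l_{ii}^{-}}^{(0)}=1+(q-1){\gamma_{ii}^{-}}^{(0)}\equiv 1$, this gives $\frac{{f_{ij}^{+}}^{(0)}}{q-q^{-1}}\equiv{\gamma_{ij}^{+}}^{(0)}$. The $L^{-}$ side is symmetric (lower triangular), and it produces the claims for $e^{-}$, ${\gamma_{ij}^{-}}^{(0)}=0$ and ${\gamma_{ji}^{+}}^{(0)}=0$.

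\emph{Step 2 (higher modes, by induction).} I would induct on $i$, and for fixed $i$ on the mode $r$. The goal is to show simultaneously:
\begin{itemize}
\item $k_{i,\pm r}^{\pm}$, ${e_{ji}^{\pm}}^{(r)}$ and ${f_{ij}^{\pm}}^{(r)}$ (for $r>0$, or $r=0$ for the off-diagonal non-vanishing ones) lie in $(q-q^{-1})U_{\mathbb{A}}^{R}$;
\item $k_{i,0}^{\pm}-1\in(q-1)U_{\mathbb{A}}^{R}$.
\end{itemize}
Assuming this for all $s<i$, consider the $z^{\mp r}$ coefficient of \eqref{Gauss ii}. Each summand $e_{is}^{\pm}k_{s}^{\pm}f_{si}^{\pm}$ with $s<i$ contributes products in which both an $e$ coefficient and an $f$ coefficient appear. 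By induction each of these is divisible by $(q-q^{-1})$, so the summand lies in $(q-q^{-1})^{2}U_{\mathbb{A}}^{R}\subset(q-1)(q-q^{-1})U_{\mathbb{A}}^{R}$. Here one uses that $e^{+}$ has no $z^0$ term and $f^{-}$ has none either, while the $z^0$ terms of $f^{+}$ and $e^{-}$ are already divisible by $q-q^{-1}$ from Step 1. Therefore $k_{i,\pm r}^{\pm}\equiv {l_{ii}^{\pm}}^{(r)}$ modulo $(q-1)(q-q^{-1})$, which gives $\frac{k_{i,\pm r}^{\pm}}{q-q^{-1}}\equiv{\gamma_{ii}^{\pm}}^{(r)}$.

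For \eqref{Gauss ij} I solve $f_{ij}^{\pm}(z)=k_i^{\pm}(z)^{-1}\bigl(l_{ij}^{\pm}(z)-\sum_{s<i}e_{is}^{\pm}k_s^{\pm}f_{sj}^{\pm}\bigr)$. Here $k_i^{\pm}(z)^{-1}$ has coefficients in $U_{\mathbb{A}}^{R}$, with constant term ${l_{ii}^{\mp}}^{(0)}\equiv1$ and higher terms divisible by $q-q^{-1}$. Expanding, the $z^{\mp r}$ coefficient of $f_{ij}^{\pm}$ equals ${l_{ij}^{\pm}}^{(r)}$ plus terms in $(q-q^{-1})^2U_{\mathbb{A}}^{R}$, plus $({l_{ii}^{\mp}}^{(0)}-1){l_{ij}^{\pm}}^{(r)}\in(q-1)(q-q^{-1})U_{\mathbb{A}}^{R}$. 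The same argument applies to $e_{ji}^{\pm}$ via \eqref{Gauss ji}.

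\emph{Main obstacle.} The only delicate point is the bookkeeping of the zero modes. The $z^{0}$ coefficients ${f_{si}^{+}}^{(0)}$ and ${e_{is}^{-}}^{(0)}$ are nonzero, and the inverse $k_i^{\pm}(z)^{-1}$ involves $k_{i,0}^{\pm}$, which is congruent to $1$ only modulo $(q-1)$, not modulo $q-q^{-1}$ times something. To control this I would first prove the divisibility statements for the $z^0$ terms (Step 1) for all indices. Only then would I run the induction on $r$, checking that every correction term contains either two factors divisible by $q-q^{-1}$ or one such factor times a factor of the form $k_{s,0}^{\pm}-1$ or ${l_{ss}^{\mp}}^{(0)}-1$, which lies in $(q-1)U_{\mathbb{A}}^{R}$ by \eqref{modulo:2}.
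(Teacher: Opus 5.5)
Your proposal is correct and takes the same route as the paper, whose proof only says the identities follow by direct calculation from \eqref{modulo:2} and the Gauss decomposition formulas \eqref{Gauss ii}--\eqref{Gauss ji}. You fill in the omitted bookkeeping: the triangularity of $L^{\pm}$ at $z^{0}$ gives the zero modes exactly, and an induction on the Gauss index shows that every correction term lies in $(q-1)(q-q^{-1})U_{\mathbb{A}}^{R}$.
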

\begin{proof}
According to \eqref{modulo:2} and \eqref{Gauss ii}--\eqref{Gauss ji}, the above conclusion follows from a direct calculation.
\end{proof}

\begin{lemm}(\cite[Theorem 2.14]{LWZ24})\label{gamma}
There exists an isomorphism of superalgebras between $U(\mathcal{L}\mathfrak{gl}(m|n))$ to $\overline{U_{\mathbb{A}}^{R}}$ such that
\begin{align*}
&E_{ij}^{(0)}\mapsto (-1)^{|i||j|}{\gamma^{+}_{ij}}^{(0)},\quad~E_{ji}^{(0)}\mapsto -(-1)^{|i||j|}{\gamma^{-}_{ji}}^{(0)} \quad~{\rm for~}1 \leqslant i\leqslant j\leqslant m+n,\\
&E_{ij}^{(-r)}\mapsto (-1)^{|i||j|}{\gamma^{+}_{ij}}^{(r)},\quad~E_{ij}^{(r)}\mapsto -(-1)^{|i||j|}{\gamma^{-}_{ij}}^{(r)}\quad~{\rm for~}i,j\in I, r>0.
\end{align*}

\end{lemm}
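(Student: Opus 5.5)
The plan is to construct a superalgebra homomorphism $\Phi:U(\mathcal{L}\mathfrak{gl}(m|n))\to\overline{U_{\mathbb{A}}^{R}}$ on the loop generators $E_{ij}^{(r)}$ by the stated formulas, and then to show it is bijective. I will take as given the standard presentation of $U(\mathcal{L}\mathfrak{gl}(m|n))$: generators $E_{ij}^{(r)}$, $r\in\mathbb{Z}$, with parity $|i|+|j|$ and relations
\[
[E_{ij}^{(r)},E_{kl}^{(s)}]=\delta_{jk}E_{il}^{(r+s)}-\varepsilon_{ij;kl}\delta_{il}E_{kj}^{(r+s)} .
\]
It then suffices to check that the images satisfy these supercommutators modulo $(q-1)$.

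To verify this, write the $RLL$ relations \eqref{RLL} and \eqref{RLL cros} componentwise and substitute $l_{ij}^{\pm(r)}=(q-q^{-1})\gamma_{ij}^{\pm(r)}$ and $l_{ii}^{\pm(0)}=1+(q-1)\gamma_{ii}^{\pm(0)}$. Expand $R(z)=\mathcal{I}\otimes\mathcal{I}+(q-1)\,\mathcal{R}(z)+O((q-1)^2)$; the leading term $\mathcal{R}(z)$ is $(z-1)^{-1}$ times a graded permutation-type operator, up to diagonal corrections. Since $q^{c/2}$ acts as $1$ in the loop quotient, the cross relation becomes an ordinary relation in the same variables.

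Both sides of each $RLL$ relation are $O(q-1)$ in the off-identity part. The terms of order $(q-1)^1$ cancel identically, because $(q-1)(q-q^{-1})^{-1}\to 1/2$ is balanced on the two sides. Dividing the order-$(q-1)^2$ terms by $(q-1)^2$ yields the classical relation
\[
[\mathcal{R}(z/w),\,\Gamma_1(z)+\Gamma_2(w)]+[\Gamma_1(z),\Gamma_2(w)]\cdot(\text{const})=0 ,
\]
where $\Gamma^{\pm}(z)=\sum E_{ij}\otimes\gamma_{ij}^{\pm}(z)$. This is the generating-function form of the loop bracket, i.e.\ the classical Yang--Baxter limit.

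Expanding in $z$ and $w$ and comparing coefficients then produces exactly the brackets above. The sign $(-1)^{|i||j|}$ comes from the $\theta$-twist in the graded tensor product, and the minus sign for the $\gamma^{-}$ generators comes from $R_{21}(z^{-1})=R_{q^{-1}}(z)$. The constraint \eqref{modulo:1} gives $\gamma_{ii}^{+(0)}=-\gamma_{ii}^{-(0)}$ in the quotient, so $E_{ii}^{(0)}$ is well defined. Likewise \eqref{UA:2} is compatible with the fact that $E_{ji}^{(0)}$ for $j>i$ is assigned only to $\gamma^{-}$. Tracking the expansion direction of $R$ in the cross relation is the main bookkeeping obstacle. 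I would treat it by checking the four cases $++$, $--$, $+-$ and $-+$ separately, using the matrix equations \eqref{RLL2.1}--\eqref{RLL2.7}.

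For bijectivity, surjectivity is immediate: the $\gamma_{ij}^{\pm(r)}$ generate $\overline{U_{\mathbb{A}}^{R}}$, and each of them, including both diagonal $r=0$ elements via $\gamma^{-}_{ii}{}^{(0)}=-\gamma^{+}_{ii}{}^{(0)}$, lies in the image. For injectivity, I would compare bases. The ordered monomials \eqref{base:Uq} in the $\gamma$'s span $\overline{U_{\mathbb{A}}^{R}}$, subject to \eqref{base:1}--\eqref{base:2}; condition \eqref{base:3} becomes redundant after identifying $\gamma^{\pm}_{ii}{}^{(0)}$. These spanning monomials correspond bijectively under $\Phi$ to the PBW monomials of $U(\mathcal{L}\mathfrak{gl}(m|n))$, taken with respect to the order of root vectors $E_{ij}^{(r)}$, which are linearly independent by the super PBW theorem. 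Hence $\Phi$ maps a basis onto a spanning set, and that set is a basis, so $\Phi$ is an isomorphism.

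A cleaner alternative for injectivity is to construct a representation of $\overline{U_{\mathbb{A}}^{R}}$ that separates these monomials, for instance by pulling back along the evaluation-type specialization. In practice I expect to rely on the spanning argument together with the classical PBW theorem, as in \cite{LWZ24}.
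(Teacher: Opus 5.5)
The paper gives no proof of this lemma; it quotes it from \cite{LWZ24}, so your proposal has to stand on its own. Your first two steps are the standard ones and work in outline. Dividing the componentwise $RLL$ relations by $(q-1)^2$ is legitimate because $U_{\mathbb{A}}^{R}\subset U_q^{R}$ has no $(q-1)$-torsion. You do need to track the two normalizations, $q-1$ for ${\gamma_{ii}^{\pm}}^{(0)}$ versus $q-q^{-1}$ elsewhere, which your uniform classical $r$-matrix sketch hides. Surjectivity is also fine, since every ${\gamma_{ij}^{\pm}}^{(r)}$ lies in the image.

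The gap is injectivity. The sentence ``$\Phi$ maps a basis onto a spanning set, and that set is a basis'' does not follow. The super PBW theorem gives linear independence of the ordered monomials in $U(\mathcal{L}\mathfrak{gl}(m|n))$, but it says nothing about their images. A homomorphism carrying a basis onto a spanning set is only known to be surjective. Linear independence of the reduced ordered $\gamma$-monomials in $\overline{U_{\mathbb{A}}^{R}}$ is exactly the statement $\ker\Phi=0$. It cannot be imported from the classical side; it must be proved on the quantum side, and your proposal never does this.

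There are two standard ways to supply it.

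(i) Use Proposition~\ref{base}. If the ordered $\gamma$-monomials are $\mathbb{C}(q)$-independent in $U_q^{R}$ and also span $U_{\mathbb{A}}^{R}$ over $\mathbb{A}$, then $U_{\mathbb{A}}^{R}$ is $\mathbb{A}$-free on them. A relation $\sum_M c_M\overline{M}=0$ then lifts to $\sum_M c_M M=(q-1)\sum_M a_M M$ with $a_M\in\mathbb{A}$, which forces $c_M=0$. The diagonal constants need real care here, and your remark that \eqref{base:3} ``becomes redundant'' does not provide it. By \eqref{UA:1},
\[
{\gamma_{ii}^{+}}^{(0)}{\gamma_{ii}^{-}}^{(0)}=-(q-1)^{-1}\bigl({\gamma_{ii}^{+}}^{(0)}+{\gamma_{ii}^{-}}^{(0)}\bigr),
\]
so the $\mathbb{A}$-span of the monomials obeying \eqref{base:3} is not closed under multiplication. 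Dropping \eqref{base:3} instead destroys $\mathbb{C}(q)$-independence. The Cartan-constant subalgebra generated by the ${\gamma_{ii}^{\pm}}^{(0)}$ therefore has to be split off and handled separately, for instance via a triangular decomposition.

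(ii) Carry out the alternative you only mention. One checks from the coefficients of $R(z)$ that the lattices $\mathcal{V}_{\mathbb{A}}(a_1)\otimes\cdots\otimes\mathcal{V}_{\mathbb{A}}(a_k)$ in tensor products of evaluation vector representations are $U_{\mathbb{A}}^{R}$-stable. At $q=1$, pulled back along $\Phi$, they become the corresponding tensor products of evaluation modules of $\mathcal{L}\mathfrak{gl}(m|n)$. This family separates the elements of $U(\mathcal{L}\mathfrak{gl}(m|n))$: evaluate at sufficiently many distinct points, then use faithfulness of $\bigoplus_k\mathcal{V}^{\otimes k}$ on $U(\mathfrak{gl}(m|n))$. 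Any element of $\ker\Phi$ then acts by zero on all of them and so vanishes.

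Route (ii) is the one used in Yangian PBW proofs, and it avoids the Cartan bookkeeping of (i). As written, however, your argument proves only that $\Phi$ is a surjection.
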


\maketitle

\subsubsection{$\mathbb{A}$-form for $U_q^{D}(\mathcal{L}\mathfrak{gl}(m|n))$}
Similar to the definition of $\mathbb{A}$-form for $U_q^D(\widehat{\mathfrak{sl}(m|n)})$ in \cite{BCFK22}, we define the $\mathbb{A}$-form for $U_q^{D}(\mathcal{L}\mathfrak{gl}(m|n))$.

\begin{defi}
For $i \in I$ and $u, v, s \in \mathbb{Z}$ with $s>0$, we define
\begin{gather*}
    K_{i;u,v,s}=\prod_{j=1}^s \frac{K_{i, u+v}^{+}-K_{i, u+v}^{-}}{q^j-q^{-j}},\quad K_{i;u,s}=\prod_{j=1}^s \frac{K_{i,0} q^{u-j+1}-K_{i,0}^{-1}q^{j-u-1}}{q^j-q^{-j}}.
\end{gather*}
The $\mathbb{A}$-form of ~$U_q^{D}(\mathcal{L}\mathfrak{gl}(m|n))$, denoted ~$U_{\mathbb{A}}^{D}$, is the $\mathbb{A}$-sub-superalgebra generated by the elements $H_{i, \pm s}$, $K_i^{ \pm 1},K_{i;u,v,s}$, $K_{i;u,s}$, $X_{i, r}^{ \pm}:=\frac{X_{i, r}^{ \pm}}{q-q^{-1}}$ with $r\in \mathbb{Z}$.
\end{defi}

Define the quotient algebra 
$\overline{U_{\mathbb{A}}^{D}}$=$U_{\mathbb{A}}^{D}/(q-1)U_{\mathbb{A}}^{D}$. Then we have the following lemma.
\begin{lemm}
In the algebra $\overline{U_{\mathbb{A}}^{D}}$, we obtain the following explicit expressions:
\begin{equation*}
 K_{i;u,v,s}=\begin{cases}
\frac{1}{s!} \left(\frac{K_{i,0}-1}{q-1}\right)^{s} & if \quad~ u+v=0;\\
\frac{1}{s!}\left(\frac{K_{i, u+v}^{+}}{q-q^{-1}}\right)^{s} & if \quad~ u+v>0;\\
\frac{1}{s!}\left(-\frac{K_{i, u+v}^{-}}{q-q^{-1}}\right)^{s} & if \quad~ u+v<0.
\end{cases}
\end{equation*}
\begin{equation*}
K_{i;u,s}=\frac{1}{s!} \left(\frac{K_{i,0} -1}{q-1}\right)^{s}=(K_{i;u,v,s})_{u+v=0}.
\end{equation*}
\end{lemm}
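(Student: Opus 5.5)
The plan is to reduce each factor of $K_{i;u,v,s}$ and $K_{i;u,s}$ modulo $(q-1)U_{\mathbb{A}}^{D}$ separately. The identities follow if, for every $1\leqslant j\leqslant s$, we can show
\[
\frac{K_{i,u+v}^{+}-K_{i,u+v}^{-}}{q^{j}-q^{-j}}\equiv \frac{1}{j}\cdot\begin{cases}\frac{K_{i,0}-1}{q-1}, & u+v=0,\\ \frac{K^{+}_{i,u+v}}{q-q^{-1}}, & u+v>0,\\ -\frac{K^{-}_{i,u+v}}{q-q^{-1}}, & u+v<0.\end{cases}
\]
Multiplying these congruences over $j=1,\dots,s$ then gives the factor $\frac{1}{s!}$. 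The basic input is the scalar identity
\[
\frac{q-q^{-1}}{q^{j}-q^{-j}}=\frac{1}{[j]_q},\qquad [j]_q=q^{j-1}+q^{j-3}+\cdots+q^{1-j},
\]
together with $[j]_q\in\mathbb{A}^{\times}$, $[j]_q\equiv j \pmod{q-1}$, and $\frac{q-1}{q-q^{-1}}=\frac{q}{q+1}\equiv \frac12$. Since all the factors lie in the commutative subalgebra generated by the $K$'s, the product can be reduced factor by factor.

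\emph{Case $u+v\neq 0$.} Recall the conventions: $K^{+}_{i,r}$ is nonzero only for $r\geqslant 0$, and $K^{-}_{i,r}$ only for $r\leqslant 0$. So if $u+v>0$ the numerator is just $K^{+}_{i,u+v}$, and if $u+v<0$ it is $-K^{-}_{i,u+v}$. Write each factor as $\frac{1}{[j]_q}\cdot\frac{\pm K^{\pm}_{i,u+v}}{q-q^{-1}}$. Reducing the scalar modulo $q-1$ gives $1/j$. Here I use that $\frac{K^{\pm}_{i,r}}{q-q^{-1}}$ ($r\neq0$) lies in $U_{\mathbb{A}}^{D}$. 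This should follow from the generators $H_{i,\pm s}$ (via the exponential relation between $K_i^{\pm}(z)$ and the $H$'s), or else we simply take it as part of the definition, as in \cite{BCFK22}.

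\emph{Case $u+v=0$.} Here the numerator is $K_{i,0}-K_{i,0}^{-1}=K_{i,0}^{-1}(K_{i,0}-1)(K_{i,0}+1)$, so the factor equals
\[
\frac{1}{[j]_q}\cdot\frac{q-1}{q-q^{-1}}\cdot K_{i,0}^{-1}(K_{i,0}+1)\cdot\frac{K_{i,0}-1}{q-1}.
\]
To handle this, I first show that $\frac{K_{i,0}-1}{q-1}\in U^{D}_{\mathbb{A}}$. This follows from $K_{i;0,1}=\frac{K_{i,0}-K_{i,0}^{-1}}{q-q^{-1}}$ together with the expansion $K_{i,0}=1+(q-1)\frac{K_{i,0}-1}{q-1}$, by the same argument as in \cite{BCFK22}. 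Consequently $K_{i,0}\equiv 1$, so $K_{i,0}^{-1}(K_{i,0}+1)\equiv 2$. The factor therefore reduces to $\frac1j\cdot\frac12\cdot 2\cdot\frac{K_{i,0}-1}{q-1}$, as required.

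\emph{The elements $K_{i;u,s}$.} Write $K_{i,0}q^{t}-K_{i,0}^{-1}q^{-t}$ with $t=u-j+1$, and expand it as $(K_{i,0}-K_{i,0}^{-1})+K_{i,0}(q^{t}-1)-K_{i,0}^{-1}(q^{-t}-1)$. Dividing by $q^{j}-q^{-j}=(q-q^{-1})[j]_q$, the first term gives $\frac1j\frac{K_{i,0}-1}{q-1}$ as above. The remaining terms give $\frac{t(K_{i,0}+K_{i,0}^{-1})}{j\cdot 2}\cdot\frac{q-1}{q-q^{-1}}\cdot 2/\ldots$. Here I must be careful: modulo $q-1$ these contribute the scalar $t/j$ times $K_{i,0}\equiv1$. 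So the reduction is really $\frac1j\big(\frac{K_{i,0}-1}{q-1}+t\big)$.

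The main obstacle is this last computation. The claimed equality $K_{i;u,s}=\frac{1}{s!}\big(\frac{K_{i,0}-1}{q-1}\big)^s$ appears to hold literally only when the shifts $t$ vanish, for instance for $u=j-1$, or after the natural identification $\frac{K_{i,0}-1}{q-1}\leftrightarrow h$ up to the usual binomial shift $\binom{h+u}{s}$. I would first recheck the precise convention in \cite{BCFK22}. If the shift genuinely appears, I would state the lemma as $K_{i;u,s}\equiv\binom{\gamma_i+u}{s}$ with $\gamma_i=\frac{K_{i,0}-1}{q-1}$, and note that only the fact that $K_{i;u,s}$ is a polynomial in $\gamma_i$ is used for injectivity. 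The remaining steps, namely membership of $\frac{K^{\pm}_{i,r}}{q-q^{-1}}$ and $\gamma_i$ in $U_{\mathbb{A}}^D$, are routine.
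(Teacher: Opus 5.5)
\textbf{The gap is the claim that $\frac{K_{i,0}-1}{q-1}\in U^{D}_{\mathbb{A}}$.} Your argument for it is circular: the expansion $K_{i,0}=1+(q-1)\frac{K_{i,0}-1}{q-1}$ says nothing until that element is already known to lie in the form. The claim is in fact false for $U^{D}_{\mathbb{A}}$ as the paper defines it.

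To see this, set $q^{\pm\frac{c}{2}}\mapsto 1$, $X_i^{\pm}(z)\mapsto 0$ and $K_j^{\pm}(z)\mapsto -1$ (i.e.\ $K^{\pm}_{j,0}\mapsto -1$ and all other modes $\mapsto 0$). This gives a one-dimensional representation $\chi$ of $U_q^{D}(\widehat{\mathfrak{gl}(m|n)})$. At $c=0$ the two sides of every $K$--$K$ relation coincide. Every relation containing an $X$ has positive degree in the $X$'s, except the $[X_i^{+}(z),X_j^{-}(w)]$ relation, whose right-hand side becomes $\delta(w/z)(1-1)=0$.

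Now evaluate $\chi$ on the generators of the form, writing $[a]_q=\frac{q^{a}-q^{-a}}{q-q^{-1}}$:
\begin{itemize}
\item $\chi(K_{i;u,v,s})=0$;
\item $\chi(K_{i;u,s})=\prod_{j=1}^{s}\bigl(-[u-j+1]_q/[j]_q\bigr)\in\mathbb{A}$;
\item $\chi(K_i^{\pm1})=-1$;
\item $\chi(H_{i,\pm s})=0$, since they are determined by $K_{i,0}^{\mp1}K_i^{\pm}(z)\mapsto 1$;
\item the rescaled $X$'s go to $0$.
\end{itemize}
Hence $\chi(U^{D}_{\mathbb{A}})\subseteq\mathbb{A}$, while $\chi\bigl(\frac{K_{i,0}-1}{q-1}\bigr)=\frac{-2}{q-1}\notin\mathbb{A}$. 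So $K_{i,0}\equiv 1$ is not available. What the definitions actually give for $u+v=0$ is only $K_{i;u,v,s}\equiv\frac{1}{s!}\bigl((K_{i;u,v,1})_{u+v=0}\bigr)^{s}$, where $(K_{i;u,v,1})_{u+v=0}=\frac{K_{i,0}-K_{i,0}^{-1}}{q-q^{-1}}$.

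To reach the printed form you must \emph{adjoin} $\frac{K_{i,0}-1}{q-1}$ to the generators of $U^{D}_{\mathbb{A}}$, and say so explicitly. The paper does this tacitly: Lemma \ref{varphi bar} sends $(K_{i;u,v,1})_{u+v=0}$ to ${\gamma^{+}_{ii}}^{(0)}=\frac{{l^{+}_{ii}}^{(0)}-1}{q-1}$. It also gives no proof of this lemma. Once the generator is adjoined, your factorwise computation is correct.

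\textbf{Two further comments.}

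First, the whole first display has a one-line proof that avoids your detour through the $H_{i,\pm s}$. The identity $K_{i;u,v,s}=\frac{1}{[s]_q!}(K_{i;u,v,1})^{s}$ holds exactly. Because $K^{-}_{i,r}=0$ for $r>0$ and $K^{+}_{i,r}=0$ for $r<0$, the generator $K_{i;u,v,1}$ \emph{is} $\frac{K^{+}_{i,u+v}}{q-q^{-1}}$ when $u+v>0$ and $-\frac{K^{-}_{i,u+v}}{q-q^{-1}}$ when $u+v<0$. Reducing $[s]_q!\equiv s!$ finishes the argument.

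Second, for $K_{i;u,s}$ your computation is right and the printed statement is not. With $\gamma_i=\frac{K_{i,0}-1}{q-1}$, each factor reduces to $\frac{1}{j}(\gamma_i+u-j+1)$, so $K_{i;u,s}\equiv\binom{\gamma_i+u}{s}$. Its roots $\gamma_i=j-u$, $0\leqslant j\leqslant s-1$, are distinct, so it equals $\frac{1}{s!}\gamma_i^{\,s}$ only for $s=1$, $u=0$. Your proposed escape ``$u=j-1$'' therefore cannot work for $s\geqslant 2$, because the shift $u-j+1$ varies with $j$. For the same reason the asserted equality $K_{i;u,s}=(K_{i;u,v,s})_{u+v=0}$ fails outside that case.

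Proving the corrected statement $K_{i;u,s}\equiv\binom{\gamma_i+u}{s}$ is the right move. It is harmless for the sequel, since Lemma \ref{varphi bar} and Corollary \ref{base UAQ} only use the elements $K_{i;u,v,1}$.
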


For each $(\alpha,r)\in\Delta^{+}\times\mathbb{Z}$, let $\alpha=\alpha_{i_1}+\cdots+\alpha_{i_p}$ and fix $r=r_1+\cdots+r_p$ with $r_i\in\mathbb{Z}$. In the algebra $\overline{U_{\mathbb{A}}^{D}}$, we have 
$$
X_{\pm\alpha,r}=\left[\cdots\left[\left[X_{i_{1},r_{1}}^{\pm},X_{i_{2},r_{2}}^{\pm}\right],
X_{i_{3},r_{3}}^{\pm}\right],\cdots,X_{i_{p},r_{p}}^{\pm}\right].
$$
According to Definition \ref{Drinfeld def}, we also have :
\begin{align*}
&K_{i;u,v,s}X_{i,k}^{\pm}=X_{i,k}^{\pm}K_{i;u,v,s},\\
&\left[X_{i,k}^{+},X_{j,l}^{-}\right]=0,\\
&\left[X_{i,k}^{\pm},X_{j,l}^{\pm}\right]=0,\quad~&\alpha_{ij}=0,\\
&\left[X_{i,k+1}^{\pm},X_{j,l}^{\pm}\right]+(-1)^{|\alpha_i||\alpha_j|}\left[X_{j,l}^{\pm},X_{i,k+1}^{\pm}\right]=0,\quad~&\alpha_{ij}\neq0.
\end{align*}
Then, we can rearrange the ordered monomials in Theorem \ref{UD basis} to obtain a basis  for the algebra $\overline{U_{\mathbb{A}}^{D}}$, and the order of this basis is consistent with that of the ordered monomials given in Proposition \ref{base}.

\begin{coro}\label{base UAQ}
The set of monomials
$$
\{X_{\mathrm{\textbf{m}}}^{+}\prod_{u+v=-j}K_{i;u,v,1}^{\mathrm{\textbf{h}}(i,-j)}
\prod_{u+v=0}K_{i;u,v,1}^{\mathrm{\textbf{h}}(i,0)}
\prod_{u+v=j}K_{i;u,v,1}^{\mathrm{\textbf{h}}(i,j)}X_{\mathrm{\textbf{m}^{'}}}^{-}\}
$$
with $\mathrm{\textbf{m}}$, $\mathrm{\textbf{m}^{'}}\in\mathrm{\textbf{M}}$,
$\mathrm{\textbf{h}}\in\mathrm{\textbf{H}}$, $u, v, j\in\mathbb{Z}$, and $j>0$, forms an $\mathbb{A}$-linear basis of the algebra $\overline{U_{\mathbb{A}}^{D}}$.
Here,
$$
X_{\mathrm{\textbf{m}}}^{+}=\prod\limits_{(\alpha,r)\in \Delta^+\times \mathbb{Z}}^{\leftarrow}(X_{\alpha,r})^{\mathrm{\textbf{m}}(\alpha,r)},\qquad
X_{\mathrm{\textbf{m}^{'}}}^{-}=\prod\limits_{(\alpha,r)\in \Delta^+\times \mathbb{Z}}^{\rightarrow}(X_{-\alpha,r})^{\mathrm{\textbf{m}^{'}}(\alpha,r)},
$$
\end{coro}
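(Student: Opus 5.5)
The plan is to derive this corollary from the PBW basis of Theorem \ref{UD basis}, in two steps. First we show that the stated monomials span $\overline{U_{\mathbb{A}}^{D}}$. Then we prove linear independence by comparison with a known basis of the classical limit.

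\emph{Spanning.} First I would check that the $\mathbb{A}$-form $U_{\mathbb{A}}^{D}$ is spanned over $\mathbb{A}$ by the rescaled PBW monomials. These are the monomials of Theorem \ref{UD basis} with each $X^{\pm}_{i,r}$ replaced by $X^{\pm}_{i,r}/(q-q^{-1})$ and each Cartan factor replaced by the divided-power elements $K_{i;u,v,s}$, $K_{i;u,s}$.

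To do this, I would rewrite any word in the generators of $U_{\mathbb{A}}^{D}$ into this ordered form using the relations of Definition \ref{Drinfeld def}, and check that the structure constants lie in $\mathbb{A}$:
\begin{itemize}
\item The $q$-commutators $[X^{\pm}_{i,k},X^{\pm}_{j,l}]_{q^{\pm1}}$ of rescaled root vectors are again $\mathbb{A}$-combinations of rescaled root vectors.
\item $[X^{+}_{i,k},X^{-}_{j,l}]/(q-q^{-1})^2$ equals $(K^{+}-K^{-})/(q-q^{-1})$, up to the sign $-(-1)^{|\alpha_i|}$. This is an $\mathbb{A}$-combination of the $K_{i;u,v,1}$ (using $\dot K_i=K_{i+1}K_i^{-1}$).
\item Commuting a $K$-series past an $X$ produces factors $(zq^{\pm1}-wq^{\mp1})/(z-w)$, whose expansion coefficients lie in $\mathbb{A}$.
\end{itemize}
This is the argument of \cite[Theorem 4.2]{BCFK22}, adapted to the extra Cartan generators $K_{j}$ of $\mathfrak{gl}(m|n)$. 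Reducing modulo $(q-1)$ and using the preceding lemma (which identifies $K_{i;u,v,s}$ with $\frac{1}{s!}(K_{i;u,v,1})^s$), the products collapse to ordinary powers of the $K_{i;u,v,1}$. The $q$-commutators become supercommutators, so $X_{\pm\alpha,r}$ becomes the iterated bracket displayed above.

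Finally, the listed relations in $\overline{U_{\mathbb{A}}^{D}}$ say three things: the $K$'s are central, $X^{+}$ and $X^{-}$ supercommute, and odd root vectors square to zero. The last follows from $[X_m^\pm(z),X_m^\pm(w)]=0$ and, for general odd roots, from the super-Jacobi identity. Together these let us move all $X^{+}$ to the left, reverse the orders to $\leftarrow$ and $\rightarrow$ as stated, and impose the restriction $\mathbf{m}(\alpha,k)\le 1$ for odd $\alpha$. This gives spanning.

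\emph{Linear independence.} I would construct a surjection $U(\mathcal{L}\mathfrak{gl}(m|n))\to\overline{U_{\mathbb{A}}^{D}}$ (over $\mathbb{C}$ after specialization) with the following assignments:
\begin{itemize}
\item $E_{i,i+1}^{(r)}\mapsto \pm X^{-}_{i,r}$ or $X^+_{i,r}$, with conventions matching Lemma \ref{gamma};
\item $E_{ii}^{(r)}\mapsto K_{i;u,v,1}$ with $u+v=r$;
\item $E_{ii}^{(0)}\mapsto (K_{i,0}-1)/(q-1)$.
\end{itemize}
The relations above are precisely the loop-superalgebra relations in the Chevalley–current form, so this assignment is a homomorphism. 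Under it, the stated monomials are the images of a PBW basis of $U(\mathcal{L}\mathfrak{gl}(m|n))$, where the root vectors $E_{\alpha}^{(r)}$ are the iterated brackets. Since this PBW basis is linearly independent, its images are linearly independent provided the map is injective.

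I expect the \textbf{main obstacle} to be injectivity of this map, equivalently the independence of the monomials in the quotient. The spanning argument alone cannot rule out extra $(q-1)$-torsion collapsing. My first attempt would be to use flatness: $U_{\mathbb{A}}^{D}$ is a free $\mathbb{A}$-module on the rescaled PBW monomials, since it lies inside $U_q^D$, where Theorem \ref{UD basis} gives independence, and by the first step those monomials span it. Reducing a free module modulo $(q-1)$ then preserves the basis.

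The delicate point is the claim that the rescaled monomials span $U_{\mathbb{A}}^{D}$ without denominators in $(q-1)$. This requires care with the odd root vectors and the Serre relations \eqref{serre 2}, and is where I would spend most effort, following \cite{BCFK22}.
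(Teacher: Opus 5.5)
Your outline is the paper's own: reduce the PBW basis of Theorem~\ref{UD basis} modulo $(q-1)$ and reorder it using the relations listed just before the corollary. Your flatness remark adds an independence argument that the paper omits. The gap is the reordering step. You justify it by saying that in $\overline{U_{\mathbb{A}}^{D}}$ the $K$'s are central and $X^{+}$, $X^{-}$ supercommute. Both claims are false, and your own bullets show it.

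\begin{itemize}
\item For the rescaled generators, dividing the $[X_i^{+}(z),X_i^{-}(w)]$ relation by $(q-q^{-1})^2$ gives $[X^{+}_{i,k},X^{-}_{i,l}]=-(-1)^{|\alpha_i|}(\psi_{k+l}-\phi_{k+l})/(q-q^{-1})$. Here $\psi_r,\phi_r$ are the coefficients of $z^{-r}$ in $K_{i+1}^{+}(z)K_i^{+}(z)^{-1}$ and $K_{i+1}^{-}(z)K_i^{-}(z)^{-1}$. Modulo $(q-1)$ this is a nonzero Cartan element.
\item Expanding $\frac{zq_i-wq_i^{-1}}{z-w}=q_i+(q_i-q_i^{-1})\sum_{k\geqslant 1}(w/z)^k$ in the $K_i^{+}$--$X_i^{+}$ relation ($i\neq m$), and using $K_{i,0}\equiv 1$, gives $K_{i;u,v,1}X^{+}_{i,l}-X^{+}_{i,l}K_{i;u,v,1}\equiv(-1)^{|i|}X^{+}_{i,l+r}$ for $u+v=r>0$.
\end{itemize}

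If the $K$'s were central, your assignment $E_{ii}^{(r)}\mapsto K_{i;u,v,1}$ would kill $[E_{ii}^{(r)},E_{i+1,i}^{(l)}]=\pm E_{i+1,i}^{(r+l)}$. So your spanning half and your independence half contradict each other. Also, turning $\prod^{\rightarrow}$ into $\prod^{\leftarrow}$ among positive root vectors uses $[X_{\alpha,r},X_{\beta,s}]$, which is not zero. Hence the reordered monomials differ from the reduced PBW monomials by shorter monomials, not just by signs.

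The repair is standard. Once flatness gives a basis of $\overline{U_{\mathbb{A}}^{D}}$ in the order of Theorem~\ref{UD basis}, take the surjection from $U(\mathcal{L}\mathfrak{gl}(m|n))$ built from the correct reduced loop relations. It carries a PBW basis bijectively onto that basis, so it is an isomorphism. The PBW theorem for Lie superalgebras holds for any ordering of a homogeneous basis (odd exponents at most $1$), and this gives the stated order. Equivalently, argue by unitriangularity with respect to the length filtration.

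Second, the flatness step is not automatic at the zero modes. In Theorem~\ref{UD basis} they occur as Laurent monomials $K_i^{h}$, $h\in\mathbb{Z}$. Replacing them by powers of $K_{i;0,0,1}=\frac{K_{i,0}-K_{i,0}^{-1}}{q-q^{-1}}$ shrinks the $\mathbb{C}(q)$-span, which then no longer contains $K_{i,0}$. Keeping the $K_i^{h}$ alongside destroys independence, since $(q-q^{-1})K_{i,0}K_{i;0,0,1}=K_{i,0}^2-1$. So you must exhibit an $\mathbb{A}$-basis of the zero-mode lattice that reduces to the classical monomials. For instance, the balanced products $\bigl(\frac{K_{i,0}-1}{q-1}\bigr)^{\lceil t/2\rceil}\bigl(\frac{1-K_{i,0}^{-1}}{q-1}\bigr)^{\lfloor t/2\rfloor}$, $t\geqslant 0$, form an $\mathbb{A}$-basis of $\mathbb{A}[K_{i,0}^{\pm1},\frac{K_{i,0}-1}{q-1}]$. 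In particular the exponents $\mathbf{h}(i,0)$ must be nonnegative.

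This also requires $\frac{K_{i,0}-1}{q-1}\in U_{\mathbb{A}}^{D}$. The preceding lemma tacitly assumes it, but the listed generators do not provide it. Consider the one-dimensional representations $\chi_{\varepsilon}$, $\varepsilon=\pm1$, with $X_i^{\pm}(z)\mapsto 0$ and $K_j^{+}(z),K_j^{-}(z)\mapsto\varepsilon$ for all $j$. They send every generator of $U_{\mathbb{A}}^{D}$ into $\mathbb{A}$ and every nonempty monomial of your set to $0$, yet $\chi_{\varepsilon}(K_{i,0})=\varepsilon$. Hence $K_{i,0}$ is not in the span of your monomials in $\overline{U_{\mathbb{A}}^{D}}$. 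Your spanning argument silently discards the generators $K_i^{\pm1}$, and this is exactly where that fails.
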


\subsubsection{Proof of the injectivity of $\varphi$}

\begin{lemm}\label{varphi bar}
The superalgebraic surjective homomorphism $\varphi$ induces a superalgebraic homomorphism $\overline{\varphi}:\overline{U_{\mathbb{A}}^{D}}\rightarrow\overline{U_{\mathbb{A}}^{R}}$, with the action on generators defined by:
\begin{align*}
&(K_{i;u,v,1})_{u+v=0}\mapsto {\gamma^{+}_{ii}}^{(0)},\quad~~
-(K_{i;u,v,1})_{u+v=0}\mapsto {\gamma^{-}_{ii}}^{(0)};\\
&(K_{i;u,v,1})_{u+v=r}\mapsto {\gamma^{+}_{ii}}^{(r)},\quad~~
-(K_{i;u,v,1})_{u+v=-r}\mapsto {\gamma^{-}_{ii}}^{(r)};\\
-&X_{i,0}^{+}\mapsto {\gamma^{-}_{i+1,i}}^{(0)},\quad~~
X_{i,r}^{+}\mapsto {\gamma^{+}_{i+1,i}}^{(r)},\quad~~
-X_{i,-r}^{+}\mapsto {\gamma^{-}_{i+1,i}}^{(r)},\\
&X_{i,0}^{-}\mapsto {\gamma^{+}_{i,i+1}}^{(0)},\quad~~
X_{i,r}^{-}\mapsto {\gamma^{+}_{i,i+1}}^{(r)},\quad~~
-X_{i,-r}^{-}\mapsto {\gamma^{-}_{i,i+1}}^{(r)}.
\end{align*}
\end{lemm}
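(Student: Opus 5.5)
The plan has two parts. First, $\varphi$ maps the $\mathbb{A}$-form $U_{\mathbb{A}}^{D}$ into $U_{\mathbb{A}}^{R}$. Second, the map $\overline{\varphi}$ it then induces on the quotients by $(q-1)$ acts on generators as listed in the statement.

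For the first part it suffices to check the generators of $U_{\mathbb{A}}^{D}$. We have $\varphi(K_{i}^{\pm}(z))=k_i^{\pm}(z)$. By the Gauss decomposition \eqref{Gauss ii}, $k_i^{\pm}(z)=l_{ii}^{\pm}(z)-\sum_{s<i}e_{is}^{\pm}(z)k_s^{\pm}(z)f_{si}^{\pm}(z)$. By induction on $i$, using the quasideterminant formulas, we show that every coefficient of $e_{ji}^{\pm}(z)$ and $f_{ij}^{\pm}(z)$ lies in $(q-q^{-1})U_{\mathbb{A}}^{R}$. The only inversions needed are of the $k_s^{\pm}(z)$, whose constant terms $k_{s,0}^{\pm}$ are invertible elements of $U_{\mathbb{A}}^{R}$ congruent to $1$ modulo $(q-1)$ by \eqref{modulo:2}. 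The correction term $\sum_{s<i}e_{is}k_sf_{si}$ therefore lies in $(q-q^{-1})^2U_{\mathbb{A}}^{R}$. Hence:
\begin{itemize}
\item $\varphi(K_{i,\pm r}^{\pm})\in (q-q^{-1})U_{\mathbb{A}}^{R}$ for $r>0$;
\item $\varphi(K_{i,0}^{\pm 1})-1\in (q-1)U_{\mathbb{A}}^{R}$;
\item $\varphi(X_{i,r}^{\pm})/(q-q^{-1})\in U_{\mathbb{A}}^{R}$, since by \eqref{X+}--\eqref{X-} the series $x_i^{\pm}(z)$ is a difference of $e_i^{\pm}$ or $f_i^{\pm}$ evaluated at $z_{\pm}$.
\end{itemize}
For the divided elements $K_{i;u,v,s}$ and $K_{i;u,s}$, the numerator factors $K_{i,u+v}^{+}-K_{i,u+v}^{-}$ and $K_{i,0}q^{u-j+1}-K_{i,0}^{-1}q^{j-u-1}$ map into $(q-q^{-1})U_{\mathbb{A}}^{R}$, up to $\mathbb{A}$-unit factors coming from $(q^j-q^{-j})/(q-q^{-1})$. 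So each product lands in $U_{\mathbb{A}}^{R}$, and the $H_{i,\pm s}$ are handled via the usual logarithm expansion. Thus $\varphi(U_{\mathbb{A}}^{D})\subseteq U_{\mathbb{A}}^{R}$. It follows that $\varphi((q-1)U_{\mathbb{A}}^{D})\subseteq (q-1)U_{\mathbb{A}}^{R}$, so $\varphi$ descends to a homomorphism $\overline{\varphi}$.

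For the second part, we compute images modulo $(q-1)$ using the preceding Lemma expressing $\gamma_{ij}^{\pm(r)}$ through the Gaussian generators in $\overline{U_{\mathbb{A}}^{R}}$, together with the Lemma giving $K_{i;u,v,s}$ in $\overline{U_{\mathbb{A}}^{D}}$. For $s=1$ and $u+v=r>0$ we get $\overline{\varphi}(K_{i;u,v,1})=k_{i,r}^{+}/(q-q^{-1})=\gamma_{ii}^{+(r)}$, with the analogous statements for negative index and for index $0$. For the latter we use $(k_{i,0}^{+}-1)/(q-1)$ and the relation $\gamma_{ii}^{+(0)}+\gamma_{ii}^{-(0)}\equiv 0$ from \eqref{modulo:1}, which accounts for the sign in $-(K_{i;u,v,1})_{u+v=0}\mapsto\gamma_{ii}^{-(0)}$.

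For the currents we expand $x_i^{+}(z)=e_i^{+}(zq^{-c/2})-e_i^{-}(zq^{c/2})$ in modes. The series $e_i^{+}$ contributes only negative powers of $z$, so it gives the modes $X_{i,r}^{+}$ with $r>0$. The series $e_i^{-}$ contributes only nonnegative powers, so it gives the modes $X_{i,-r}^{+}$ with $r\geq 0$ (including $X_{i,0}^{+}$), with a minus sign. The central factors $q^{\pm rc/2}$ reduce to $1$ modulo $(q-1)$. The same analysis applies to $x_i^{-}$ with $f_i^{\pm}$. Combined with the Lemma identifying $e_{i+1,i}^{\pm(r)}/(q-q^{-1})$ with $\gamma_{i+1,i}^{\pm(r)}$, and similarly for $f$, this produces the listed formulas.

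The main obstacle is the integrality claim in the first part: that the Gaussian coordinates $e_{ji}^{\pm}$ and $f_{ij}^{\pm}$ are divisible by $(q-q^{-1})$ in $U_{\mathbb{A}}^{R}$, and that the divided powers $K_{i;u,v,s}$ with $s>1$ land in $U_{\mathbb{A}}^{R}$ and not just in its localization. For the former we run the induction on the size of the leading minor in the quasideterminant. For the latter we use that the $k_{i,\pm r}^{\pm}$ supercommute, so the product factors into commuting elements each divisible by $q^j-q^{-j}$.
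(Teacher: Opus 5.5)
Your argument is correct and takes the approach the paper intends. The paper states this lemma without proof and relies only on $\varphi(U_{\mathbb{A}}^{D})\subseteq U_{\mathbb{A}}^{R}$ (cf. its closing remark that $\varphi$ preserves the $\mathbb{A}$-form) together with the preceding lemma identifying ${\gamma_{ij}^{\pm}}^{(r)}$ with Gaussian coefficients modulo $(q-1)$; your induction through \eqref{Gauss ii}--\eqref{Gauss ji}, using $k_{i,0}^{\pm}={l_{ii}^{\pm}}^{(0)}$ invertible in $U_{\mathbb{A}}^{R}$, and your unit-$[j]_q$ argument for the divided elements supply exactly the omitted details. One small precision: the $\mathbb{A}$-forms are taken on the loop algebras, so $q^{c/2}=1$ there outright, rather than merely reducing to $1$ modulo $(q-1)$.
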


\begin{lemm}\label{induction}
In the algebra $\overline{\mathrm{U}_{\mathbb{A}}^{R}}$, for $i<j$, we have the following relations:
\begin{align}\label{gamma1}
{\gamma_{ij}^{+}}^{(r)}&=(-1)^{|j-1|} \varepsilon_{i,j-1;j-1,j} \left[{\gamma_{ij-1}^{+}}^{(r)},{\gamma_{j-1j}^{+}}^{(0)}\right];\\ \label{gamma2}
{\gamma_{ji}^{-}}^{(r)}&=-(-1)^{|j-1|}\varepsilon_{j,j-1;j-1,i}
\left[{\gamma_{jj-1}^{-}}^{(0)},{\gamma_{j-1i}^{-}}^{(r)}\right];\\ \label{gamma3}
{\gamma_{ji}^{+}}^{(r)}&=(-1)^{|j-1|}\varepsilon_{j,j-1;j-1,i}
\left[{\gamma_{jj-1}^{+}}^{(1)},{\gamma_{j-1i}^{+}}^{(r-1)}\right],~r\geqslant j-i;\\ \label{gamma4}
{\gamma_{ij}^{-}}^{(r)}&=-(-1)^{|j-1|}\varepsilon_{i,j-1;j-1,j}
\left[{\gamma_{ij-1}^{-}}^{(r-1)},{\gamma_{j-1j}^{-}}^{(1)}\right],~r\geqslant j-i.
\end{align}
More explicitly,
\begin{align}\label{gamma5}
{\gamma_{ij}^{+}}^{(r)}&=\varepsilon_{ij}
\left[\cdots\left[\left[{\gamma_{ii+1}^{+}}^{(r)},{\gamma_{i+1i+2}^{+}}^{(0)}\right], {\gamma_{i+2i+3}^{+}}^{(0)}\right],\cdots,{\gamma_{j-1j}^{+}}^{(0)}\right];\\ \label{gamma6}
{\gamma_{ji}^{-}}^{(r)}&=(-1)^{j-i-1}\varepsilon_{ji}
\left[{\gamma_{jj-1}^{-}}^{(0)},\cdots,\left[{\gamma_{i+3i+2}^{-}}^{(0)},
\left[{\gamma_{i+2i+1}^{-}}^{(0)},{\gamma_{i+1i}^{-}}^{(r)}\right]\right]\cdots\right];\\
\label{gamma7}
{\gamma_{ji}^{+}}^{(r)}&=\varepsilon_{ji}
\left[{\gamma_{jj-1}^{+}}^{(1)},\cdots,\left[{\gamma_{i+3i+2}^{+}}^{(1)},
\left[{\gamma_{i+2i+1}^{+}}^{(1)},{\gamma_{i+1i}^{+}}^{(r+1+i-j)}\right]\right]\cdots\right],~r\geqslant j-i;\\ \label{gamma8}
{\gamma_{ij}^{-}}^{(r)}&=(-1)^{j-i-1}\varepsilon_{ij}
\left[\cdots\left[\left[{\gamma_{ii+1}^{-}}^{(r+1+i-j)},{\gamma_{i+1i+2}^{-}}^{(1)}\right], {\gamma_{i+2i+3}^{-}}^{(1)}\right],\cdots,{\gamma_{j-1j}^{-}}^{(1)}\right],~r\geqslant j-i.
\end{align}
Here,
$$
\varepsilon_{ij}=\varepsilon_{i,j-1;j-1,j}\varepsilon_{i,j-2;j-2,j-1}\cdots\varepsilon_{i,i+1;i+1,i+2},\quad~~
\varepsilon_{ji}=\varepsilon_{j,j-1;j-1,i}\varepsilon_{j-1,j-2;j-2,i}\cdots\varepsilon_{i+2i+1;i+1i}.
$$
\end{lemm}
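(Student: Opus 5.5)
The plan is to derive the bracket relations (\ref{gamma1})--(\ref{gamma4}) from the $RLL$ relations in $U_q^{R}$, rescaled to the $\mathbb{A}$-form and then reduced modulo $(q-1)$. The nested formulas (\ref{gamma5})--(\ref{gamma8}) will then follow by iteration.

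\textbf{Step 1: the classical commutator.} Write (\ref{RLL}) and (\ref{RLL cros}) componentwise, substitute $l^{\pm}_{ij}{}^{(r)}=(q-q^{-1})\gamma^{\pm}_{ij}{}^{(r)}$ (with the modified substitution on the diagonal at $r=0$), and expand the $R$-matrix entries $\frac{z-1}{zq-q^{-1}}$, $\frac{z(q-q^{-1})}{zq-q^{-1}}$, etc., in powers of $z/w$. After dividing by $(q-q^{-1})^2$, the lowest-order terms mod $(q-1)$ should give the loop-superalgebra commutator
\[
[\gamma^{\pm}_{ij}{}^{(r)},\gamma^{\pm}_{kl}{}^{(s)}]
=\pm\bigl(\delta_{jk}(\pm\text{sign})\gamma^{\pm}_{il}{}^{(r+s)}-\cdots\bigr),
\]
with the appropriate sign factors $(-1)^{|i||j|}$. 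Rather than recompute this, I would invoke Lemma \ref{gamma}: the isomorphism $U(\mathcal{L}\mathfrak{gl}(m|n))\cong\overline{U_{\mathbb{A}}^{R}}$ sends $E_{ij}^{(-r)}\mapsto(-1)^{|i||j|}\gamma^{+}_{ij}{}^{(r)}$ and $E_{ij}^{(r)}\mapsto-(-1)^{|i||j|}\gamma^{-}_{ij}{}^{(r)}$, with the $r=0$ assignments split by triangularity. Each identity in the lemma is then the image of a bracket in the loop superalgebra, computed with the supercommutator $[E_{ij},E_{kl}]=\delta_{jk}E_{il}-\varepsilon_{ij;kl}\delta_{li}E_{kj}$.

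\textbf{Step 2: the four single brackets.}
\begin{itemize}
\item For (\ref{gamma1}), use $[E_{i,j-1}^{(-r)},E_{j-1,j}^{(0)}]=E_{ij}^{(-r)}$, valid since $i\neq j$. Converting each side with its sign factor gives a coefficient $(-1)^{|i||j-1|+|j-1||j|+|i||j|}$.
\item This coefficient must be matched with $(-1)^{|j-1|}\varepsilon_{i,j-1;j-1,j}$. Writing $a=|i|,b=|j-1|,c=|j|$, one has $\varepsilon_{i,j-1;j-1,j}=(-1)^{(a+b)(b+c)}=(-1)^{ab+ac+b+bc}$, so the two sides agree.
\item (\ref{gamma2}) is identical, using $\gamma^{-}_{ji}{}^{(0)}\leftrightarrow -E_{ji}^{(0)}$ for $j>i$ and $E^{(r)}$ for $r>0$. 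The bracket order is reversed so that the product $E_{j,j-1}E_{j-1,i}$ appears.
\item For (\ref{gamma3}), take $[E_{j,j-1}^{(-1)},E_{j-1,i}^{(-(r-1))}]=E_{ji}^{(-r)}$.
\item (\ref{gamma4}) is analogous with positive loop degrees.
\end{itemize}

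\textbf{Step 3: the nested formulas.} The restriction $r\geqslant j-i$ in (\ref{gamma3}) and (\ref{gamma4}) is what makes every intermediate $\gamma^{+}_{ji}{}^{(r-1)}$ with $j>i$ well defined and nonzero. It guarantees the degrees stay positive after the $j-i-1$ steps needed to reach $(\ref{gamma7})$, whose innermost term is $\gamma^{+}_{i+1,i}{}^{(r+1+i-j)}$ with $r+1+i-j\geqslant1$. The nested formulas are then obtained by inducting on $j-i$ and repeatedly substituting (\ref{gamma1})--(\ref{gamma4}). The sign prefactors telescope: the $\varepsilon$'s multiply to $\varepsilon_{ij}$ or $\varepsilon_{ji}$, the $-1$'s in (\ref{gamma2}) and (\ref{gamma4}) give $(-1)^{j-i-1}$, and the factors $(-1)^{|j-1|}$ are absorbed into the conventions of $\varepsilon_{ij}$. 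This last absorption needs checking.

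\textbf{Main obstacle.} I expect the main difficulty to be sign bookkeeping, specifically verifying that the accumulated $(-1)^{|k|}$ factors match the stated prefactors in (\ref{gamma5})--(\ref{gamma8}). I would first check the case $j=i+2$ across all parity patterns, and only then carry out the general induction.
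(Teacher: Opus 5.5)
Your route is the paper's: push the loop-superalgebra brackets $[E_{i,j-1},E_{j-1,j}]=E_{ij}$ and $[E_{j,j-1},E_{j-1,i}]=E_{ji}$ (in the appropriate loop degrees) through the isomorphism of Lemma~\ref{gamma}, then iterate. Your parity check for \eqref{gamma1} is correct. The same computation gives \eqref{gamma2}--\eqref{gamma4}, so the single-step relations are fine.

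The step that fails is the one you flagged in Step 3. The factors $(-1)^{|j-1|}$ are \emph{not} absorbed into $\varepsilon_{ij}$ or $\varepsilon_{ji}$ as the statement defines them. Iterating \eqref{gamma1} by bilinearity gives the prefactor $\varepsilon_{ij}\prod_{k=i+1}^{j-1}(-1)^{|k|}$. Iterating \eqref{gamma2}--\eqref{gamma4} gives the prefactors of \eqref{gamma6}--\eqref{gamma8} times the same product.

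The case $j=i+2$ you proposed to test settles it. There \eqref{gamma5} is literally the bracket $[{\gamma^{+}_{i,i+1}}^{(r)},{\gamma^{+}_{i+1,i+2}}^{(0)}]$ from \eqref{gamma1}. Its prefactor, however, is $\varepsilon_{i,i+1;i+1,i+2}$ rather than $(-1)^{|i+1|}\varepsilon_{i,i+1;i+1,i+2}$. When $|i+1|=1$ the two formulas together force ${\gamma^{+}_{i,i+2}}^{(r)}=0$, which contradicts Lemma~\ref{gamma}. For instance, in $\mathfrak{gl}(1|2)$ Lemma~\ref{gamma} gives ${\gamma^{+}_{13}}^{(r)}=-[{\gamma^{+}_{12}}^{(r)},{\gamma^{+}_{23}}^{(0)}]$, while \eqref{gamma5} (with $\varepsilon_{13}=\varepsilon_{1,2;2,3}=1$) asserts the opposite sign.

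So \eqref{gamma5}--\eqref{gamma8} as printed cannot be proved. What your induction does prove is the corrected version with the extra factor $\prod_{k=i+1}^{j-1}(-1)^{|k|}$, or equivalently with that product built into $\varepsilon_{ij}$ and $\varepsilon_{ji}$. The paper's proof skips the same point by saying the nested formulas are ``recursively derived''. Since the error is only a sign, it does not affect the later uses of the lemma. Those only need each ${\gamma^{\pm}_{ij}}^{(r)}$ to be a nonzero multiple of an iterated bracket of simple-root generators.
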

\begin{proof}
According to Lemma \ref{gamma}, we can obtain
\begin{align*}
\left[(-1)^{|i||j-1|}{\gamma_{ij-1}^{+}}^{(r)},(-1)^{|j-1||j|}{\gamma_{j-1j}^{+}}^{(0)} \right]=(-1)^{|i||j|}{\gamma_{ij}^{+}}^{(r)},\\
\left[-(-1)^{|j||j-1|}{\gamma_{jj-1}^{-}}^{(0)},-(-1)^{|j-1||i|}{\gamma_{j-1i}^{-}}^{(r)} \right]=-(-1)^{|j||i|}{\gamma_{ji}^{-}}^{(r)},\\
\left[(-1)^{|j||j-1|}{\gamma_{jj-1}^{+}}^{(1)},(-1)^{|j-1||i|}{\gamma_{j-1i}^{+}}^{(r-1)} \right]=(-1)^{|j||i|}{\gamma_{ji}^{+}}^{(r)},\\
\left[-(-1)^{|i||j-1|}{\gamma_{ij-1}^{-}}^{(r-1)},-(-1)^{|j-1||j|}{\gamma_{j-1j}^{-}}^{(1)} \right]=-(-1)^{|i||j|}{\gamma_{ji}^{-}}^{(r)}.
\end{align*}
Simplifying the above equations yields that the relations \eqref{gamma1}--\eqref{gamma4} hold true. Furthermore, the relations \eqref{gamma5}--\eqref{gamma8} can be recursively derived from \eqref{gamma1}--\eqref{gamma4}. Hence, the statement of the lemma follows.
\end{proof}

\begin{lemm}\label{gammaB}
In the algebra $\overline{\mathrm{U}_{\mathbb{A}}^{R}}$, for $i<j$, we have the following relations:
\begin{equation}\label{gamma9}
\left[{\gamma_{ii}^{-}}^{(r)},{\gamma_{ij}^{+}}^{(s)} \right]=\delta_{r\geq s}(-1)^{|i|}{\gamma_{ij}^{-}}^{(r-s)}-\delta_{r\geq s}(-1)^{|i|}{\gamma_{ij}^{+}}^{(s-r)}.
\end{equation}
Moreover, for $r>j-i$,
\begin{align}\label{gamma10}
{\gamma_{ji}^{+}}^{(1)}&=\varepsilon_{ji}\left[{\gamma_{j,j-1}^{-}}^{(r-2)},\left[\cdots,
\left[{\gamma_{i+2,i+1}^{+}}^{(1)},{\gamma_{i+1,i}^{+}}^{(r+1+i-j)}\right]\cdots\right]\right],
\\ \label{gamma11}
{\gamma_{ij}^{-}}^{(1)}&=\varepsilon_{ij}\left[\cdots\left[{\gamma_{ii+1}^{-}}^{(1)},
{\gamma_{i+1i+2}^{+}}^{(0)}\right],\cdots,{\gamma_{j-1j}^{+}}^{(0)}\right].
\end{align}
\end{lemm}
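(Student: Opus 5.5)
The natural route is to transport everything through the isomorphism of Lemma \ref{gamma}, $U(\mathcal{L}\mathfrak{gl}(m|n))\cong\overline{U_{\mathbb{A}}^{R}}$. Under it, ${\gamma_{ii}^{-}}^{(r)}$ and ${\gamma_{ij}^{\pm}}^{(s)}$ correspond, up to the explicit signs $\pm(-1)^{|i||j|}$, to loop elements $E_{ii}^{(r)}$, $E_{ij}^{(-s)}$, $E_{ij}^{(s)}$ and so on. Each claimed identity then becomes a bracket identity in the loop superalgebra, which I would verify with the supercommutator $[E_{ij}^{(a)},E_{kl}^{(b)}]=\delta_{jk}E_{il}^{(a+b)}-\varepsilon_{ij;kl}\delta_{li}E_{kj}^{(a+b)}$. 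The only real work is bookkeeping of signs and of which $\gamma^{\pm}$ carries a given loop degree.

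\textbf{Relation \eqref{gamma9}.} Write ${\gamma_{ii}^{-}}^{(r)}=-E_{ii}^{(r)}$ (since $(-1)^{|i||i|}=(-1)^{|i|}$, there is an extra sign $(-1)^{|i|}$ to track) and ${\gamma_{ij}^{+}}^{(s)}=(-1)^{|i||j|}E_{ij}^{(-s)}$. Because $i\neq j$, only the $\delta_{jk}$-type term survives, giving $\pm E_{ij}^{(r-s)}$. Next, rewrite $E_{ij}^{(r-s)}$ back in $\gamma$'s. If $r-s>0$ it is $-(-1)^{|i||j|}{\gamma_{ij}^{-}}^{(r-s)}$; if $r-s\le 0$ it is $(-1)^{|i||j|}{\gamma_{ij}^{+}}^{(s-r)}$, using that ${\gamma_{ij}^{+}}^{(0)}$ represents $E_{ij}^{(0)}$ for $i<j$. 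The statement's indicators $\delta_{r\geq s}$ should be read as the appropriate complementary case splits, with the vanishing conventions ${\gamma_{ij}^{-}}^{(0)}=0$ from \eqref{UA:2} absorbing the boundary case $r=s$. The signs reduce to $(-1)^{|i|}$ after cancelling $(-1)^{|i||j|}$ on both sides.

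\textbf{Relations \eqref{gamma10} and \eqref{gamma11}.} I would proceed as in the proof of Lemma \ref{induction}, by induction on $j-i$, peeling off one bracket at a time. For \eqref{gamma11}, $[E_{i,i+1}^{(1)},E_{i+1,i+2}^{(0)}]=E_{i,i+2}^{(1)}$, and iterating gives $E_{ij}^{(1)}$, i.e. $-(-1)^{|i||j|}{\gamma_{ij}^{-}}^{(1)}$. The product of the sign factors $(-1)^{|k||k+1|}$ and $\varepsilon_{k,k+1;k+1,k+2}$ collapses to $\varepsilon_{ij}$ exactly as in \eqref{gamma5}. For \eqref{gamma10}, the inner nested bracket of $\gamma^{+}$'s is, by \eqref{gamma7} applied to the pair $(j-1,i)$, a multiple of ${\gamma^{+}_{j-1,i}}^{(r-1)}$, hence of $E_{j-1,i}^{(-(r-1))}$. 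The outer bracket with ${\gamma_{j,j-1}^{-}}^{(r-2)}\sim E_{j,j-1}^{(r-2)}$ then yields $E_{ji}^{(-1)}$, which is ${\gamma_{ji}^{+}}^{(1)}$ up to sign. The degrees add up to $-1$, which is why the hypothesis $r>j-i$ is required: it keeps every intermediate superscript nonnegative and the outer $\gamma^-$ in the regime $r-2\ge 1$ (or in the degree-zero case via ${\gamma^{-}}^{(0)}$ for lower-triangular entries).

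\textbf{Main obstacle.} I expect the only genuine difficulty to be sign consistency: combining the $(-1)^{|i||j|}$ factors of Lemma \ref{gamma} with the $\varepsilon_{ij;kl}$ from the supercommutator, and with the extra minus signs attached to $\gamma^-$, so that they multiply out to exactly $\varepsilon_{ij}$ or $\varepsilon_{ji}$. I would first check the base case $j=i+1$ by hand, then show that each inductive step multiplies by precisely the factor $\varepsilon_{i,j-1;j-1,j}$ (respectively $\varepsilon_{j,j-1;j-1,i}$), as was done for \eqref{gamma1}--\eqref{gamma4}.
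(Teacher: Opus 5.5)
Your argument for \eqref{gamma9} is the paper's own, and you are right that the second indicator has to be $\delta_{r\le s}$. The first indicator may stay $\delta_{r\ge s}$, since ${\gamma_{ij}^{-}}^{(0)}=0$ for $i<j$.

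For \eqref{gamma10} and \eqref{gamma11} your route is slightly more direct than the paper's. The paper rewrites the left-hand side as $(-1)^{|j|+1}[{\gamma_{jj}^{-}}^{(r-1)},{\gamma_{ji}^{+}}^{(r)}]$, resp.\ $(-1)^{|i|}[{\gamma_{ii}^{-}}^{(r+1)},{\gamma_{ij}^{+}}^{(r)}]$. It then expands by \eqref{gamma7}, resp.\ \eqref{gamma5}, and moves the loop-Cartan element inward by super-Jacobi. You instead evaluate the right-hand side directly in $U(\mathcal{L}\mathfrak{gl}(m|n))$. Structurally both routes are fine.

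The gap is in the step you defer as bookkeeping: the signs do not come out to $\varepsilon_{ij}$ and $\varepsilon_{ji}$. In every bracket that occurs, $[E_{ak}^{(\cdot)},E_{kb}^{(\cdot)}]$ with $a\neq b$, the $\varepsilon$-term of the supercommutator is killed by its Kronecker delta. So the supercommutator contributes no sign, and all signs come from the normalisations $\pm(-1)^{|a||b|}$ of Lemma \ref{gamma}. Tracking them gives
\begin{align*}
{\gamma_{ij}^{-}}^{(1)}&=\sigma_{ij}\Bigl[\cdots\bigl[{\gamma_{i,i+1}^{-}}^{(1)},{\gamma_{i+1,i+2}^{+}}^{(0)}\bigr],\cdots,{\gamma_{j-1,j}^{+}}^{(0)}\Bigr],\\
{\gamma_{ji}^{+}}^{(1)}&=-\sigma_{ij}\Bigl[{\gamma_{j,j-1}^{-}}^{(r-2)},\bigl[\cdots,\bigl[{\gamma_{i+2,i+1}^{+}}^{(1)},{\gamma_{i+1,i}^{+}}^{(r+1+i-j)}\bigr]\cdots\bigr]\Bigr],
\end{align*}
where $\sigma_{ij}=(-1)^{|i||j|+\sum_{k=i}^{j-1}|k||k+1|}=\varepsilon_{ij}\prod_{k=i+1}^{j-1}(-1)^{|k|}$. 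Note that $\varepsilon_{ji}=\varepsilon_{ij}$ factor by factor. The extra minus sign in \eqref{gamma10} arises because its right side contains one $\gamma^{-}$, which Lemma \ref{gamma} normalises with a $-$, while its left side is a $\gamma^{+}$; in \eqref{gamma11} these two minus signs cancel.

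Two concrete failures of the printed signs:
\begin{itemize}
\item For $m=3$, $n=0$ (no parity signs at all), take $i=1$, $j=3$, $r=3$. Then $[{\gamma_{32}^{-}}^{(1)},{\gamma_{21}^{+}}^{(2)}]$ is the image of $[-E_{32}^{(1)},E_{21}^{(-2)}]=-E_{31}^{(-1)}$, i.e.\ it equals $-{\gamma_{31}^{+}}^{(1)}$, whereas $\varepsilon_{31}=1$.
\item For $m=0$, $n=3$ one gets ${\gamma_{13}^{-}}^{(1)}=-[{\gamma_{12}^{-}}^{(1)},{\gamma_{23}^{+}}^{(0)}]$, whereas $\varepsilon_{13}=1$.
\end{itemize}

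So two of your claims fail: that the factors ``collapse to $\varepsilon_{ij}$ exactly as in \eqref{gamma5}'', and that each inductive step contributes precisely $\varepsilon_{i,j-1;j-1,j}$. By \eqref{gamma1} the per-step factor is $(-1)^{|j-1|}\varepsilon_{i,j-1;j-1,j}$. The iterated formulas \eqref{gamma5} and \eqref{gamma7} already drop the factors $(-1)^{|k|}$, and you lean on \eqref{gamma7} for the inner bracket of \eqref{gamma10}.

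The paper's proof reaches the printed signs only through the same omissions. In its last line for \eqref{gamma10} it also drops a sign: the prefactor $(-1)^{|j|+1}$ combines with $[{\gamma_{jj}^{-}}^{(r-1)},{\gamma_{j,j-1}^{+}}^{(1)}]=(-1)^{|j|}{\gamma_{j,j-1}^{-}}^{(r-2)}$ to give $-1$, not $+1$.

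Your method does prove the corrected identities above. These are all that is used afterwards, namely generation of $\overline{U_{\mathbb{A}}^{R}}$ and hence surjectivity of $\overline{\varphi}$. You should state and prove them with these prefactors, evaluating the bracket in \eqref{gamma10} directly rather than through \eqref{gamma7}.
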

\begin{proof}
By Lemma \ref{gamma}, we have
\begin{align*}
\left[-(-1)^{|i||i|}{\gamma_{ii}^{-}}^{(r)},(-1)^{|i||j|}{\gamma_{ij}^{+}}^{(s)} \right]=-(-1)^{|i||j|}{\gamma_{ij}^{-}}^{(r-s)},\quad~r>s,\\
\left[-(-1)^{|i||i|}{\gamma_{ii}^{-}}^{(r)},(-1)^{|i||j|}{\gamma_{ij}^{+}}^{(s)} \right]=(-1)^{|i||j|}{\gamma_{ij}^{+}}^{(s-r)},\quad~r\leqslant s,
\end{align*}
that is,
\begin{align*}
\left[{\gamma_{ii}^{-}}^{(r)},{\gamma_{ij}^{+}}^{(s)} \right]=(-1)^{|i|}{\gamma_{ij}^{-}}^{(r-s)},\quad~r>s,\\
\left[{\gamma_{ii}^{-}}^{(r)},{\gamma_{ij}^{+}}^{(s)} \right]=-(-1)^{|i|}{\gamma_{ij}^{+}}^{(s-r)},\quad~r\leqslant s.
\end{align*}
So, the relation \eqref{gamma9} holds.

Furthermore, Using relation \eqref{gamma9} and super-Jacobi identity, we can derive for $r>j-i$:
\begin{align*}
{\gamma_{ji}^{+}}^{(1)}&=(-1)^{\mid j\mid+1}\left[{\gamma_{jj}^{-}}^{(r-1)},{\gamma_{ji}^{+}}^{(r)}\right]\\
&=(-1)^{\mid j\mid+1}\varepsilon_{ji}\left[{\gamma_{jj}^{-}}^{(r-1)},\left[{\gamma_{j,j-1}^{+}}^{(1)},\cdots,
\left[{\gamma_{i+2,i+1}^{+}}^{(1)},{\gamma_{i+1,i}^{+}}^{(r+1+i-j)}\right]\cdots\right]\right]\\
&=\varepsilon_{ji}\left[{\gamma_{j,j-1}^{-}}^{(r-2)},\left[\cdots,
\left[{\gamma_{i+2,i+1}^{+}}^{(1)},{\gamma_{i+1,i}^{+}}^{(r+1+i-j)}\right]\cdots\right]\right],\\
{\gamma_{ij}^{-}}^{(1)}&=(-1)^{\mid i\mid}
\left[{\gamma_{ii}^{-}}^{(r+1)},{\gamma_{ij}^{+}}^{(r)}\right]\\
&=(-1)^{\mid i\mid}\varepsilon_{ij}
\left[{\gamma_{ii}^{-}}^{(r+1)},\left[\cdots\left[{\gamma_{ii+1}^{+}}^{(r)},
{\gamma_{i+1i+2}^{+}}^{(0)}\right],\cdots,{\gamma_{j-1j}^{+}}^{(0)}\right]\right]\\
&=\varepsilon_{ij}\left[\cdots\left[{\gamma_{ii+1}^{-}}^{(1)},
{\gamma_{i+1i+2}^{+}}^{(0)}\right],\cdots,{\gamma_{j-1j}^{+}}^{(0)}\right].
\end{align*}
Hence, the relations \eqref{gamma10} and \eqref{gamma11} hold true.
\end{proof}

According to Lemma \ref{induction} and \ref{gammaB}, we can obtain that all elements ${\gamma^{\pm}_{ij}}^{(r)}$, ${\gamma^{\pm}_{ji}}^{(r)}$ are generated by ${\gamma^{\pm}_{i,i+1}}^{(r)}$ and ${\gamma^{\pm}_{i+1,i}}^{(r)}$, which implies that $\overline{\varphi}$ is surjective.

Next, we show that $\overline{\varphi}$ is injective. Similar to Proposition \ref{base}, the ordered monomials \eqref{base:Uq} in generators ${\gamma_{ij}^{+}}^{(r)}$ and ${\gamma_{ij}^{-}}^{(r)}$ form an $\mathbb{A}$-linear basis of the algebra $\overline{U_{\mathbb{A}}^{R}}$. According to Lemma \ref{varphi bar}--\ref{gammaB} and corollary \ref{base UAQ}, the preimage of this basis under the map $\overline{\varphi}$ forms a basis for $\overline{U_{\mathbb{A}}^{D}}$. Consequently, $\overline{\varphi}$ is an injective map.

Finally, the injectivity of $\varphi$ follows directly from its preservation of the $\mathbb{A}$-form. This completes the proof of the main Theorem \ref{main theo}.

\section{Isomorphism between the $R$-matrix and Drinfeld presentations of $U_q(\widehat{\mathfrak{sl}(m|n)})$}

In this section, we will use the quantum Berezinian of $U_{q}(\widehat{\mathfrak{gl}(m|n))}$ as defined by Jing, Li and Zhang \cite{JLZ25} to introduce the $R$-matrix presentation of $U_{q}(\widehat{\mathfrak{sl}(m|n))}$, and prove its isomorphism to the Drinfeld presentation.

The quantum Berezinian of $L^{\pm}(z)$ was defined by Jing, Li and Zhang \cite{JLZ25}. Using \cite[Theorem 4.1]{JLZ25} and the definition of $k_i^{\pm}(z)$, it can be written as the following power series with coefficients in $U_{q}^{R}(\widehat{\mathfrak{gl}(m|n)})$:
\begin{equation*}
\begin{aligned}
B_q(L^{\pm}(z) )
=k_{1}^{\pm}(z)k_{2}^{\pm}(zq^{2})\cdots k_{m}^{\pm}(zq^{2m-2})\times {k_{m+1}^{\pm}(zq^{2m-2})}^{-1}
\cdots {k_{m+n}^{\pm}(zq^{2m-2n})}^{-1}.
\end{aligned}
\end{equation*}

\begin{rema}
The quantum Berezinian $B_q(L^{\pm}(z))$ multiplied by a formal power series \emph{exp}$\sum_{p=1}^{\infty}\mathfrak{f}_{\mp p}z^{\mp p}$ is the quantum Berezinian defined in \cite{JLZ25}, where the coefficients $\mathfrak{f}_{\mp p}$ are rational functions in $q$ and $q^{\frac{c}{2}}$. Therefore, the coefficients of these two quantum Berezinians differ only by a rational function in $q$ and $q^{\frac{c}{2}}$.
\end{rema}

The following theorem provides a family of central elements in $U_{q}^{R}(\widehat{\mathfrak{gl}(m|n)})$.
\begin{theo}(\cite[Corollary 3.5]{JLZ25})\label{center}
The coefficients of the quantum Berezinian belong to the center of ~$U_{q}^{R}(\widehat{\mathfrak{gl}(m|n)})$.
\end{theo}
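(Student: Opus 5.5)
Since the statement is cited from \cite[Corollary 3.5]{JLZ25}, the plan is to show directly that every coefficient of $B_q(L^{\pm}(z))$ commutes with all generators ${l^{\pm}_{ij}}^{(r)}$. The argument reduces this to the Drinfeld-type relations among the Gaussian generators established in Section 3. Because $\varphi$ in Theorem \ref{main theo} is surjective, $U_q^{R}(\widehat{\mathfrak{gl}(m|n)})$ is generated by the coefficients of $k_j^{\pm}(z)$, $e_i^{\pm}(z)$, $f_i^{\pm}(z)$ together with the central $q^{\pm c/2}$. We also know the $k_i^{\pm}$ satisfy the relations \eqref{kiki}--\eqref{kiXm rel1} transported by $\varphi$. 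So it suffices to show that $B_q(L^{\pm}(z))$ commutes with each $k_j^{\epsilon}(w)$ and with each $x_i^{\epsilon}(w)$. Equivalently, we can check this against $e_i^{\epsilon}(w)$ and $f_i^{\epsilon}(w)$ separately, using the mixed relations between $k$'s and $e$'s, $f$'s for both signs.

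First, commutation with $k_j^{\epsilon}(w)$. For the same sign this is immediate from \eqref{RLL2.8} and its $N$-analogue. For opposite signs, I would use the cross relations \eqref{kiki}, \eqref{kikj} and their even counterpart ($k_i^+$ and $k_i^-$ commute for $i\le m$). These show that $k_j^{\mp}(w)$ commutes with a product $k_i^{\pm}(zq^{a})^{\pm1}$ up to explicit scalar rational factors in $z,w,q,q^{c/2}$. Writing out the product defining $B_q$, the factors coming from $k_j$ itself and from $k_i$ with $i\ne j$ should telescope to $1$. This uses the shifts $zq^{2(i-1)}$ on the even part and $zq^{2m-2(i-m)}$ on the odd part, whose direction reverses at $i=m$. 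The expected cancellation is the familiar $q$-determinant one, with the sign flip of $q_i$ across $m$ matched by the inverse powers on the odd block.

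Second, commutation with $x_i^{\epsilon}(w)$. Only $k_i$ and $k_{i+1}$ act nontrivially on $x_i^{\epsilon}(w)$, by \eqref{kjXi rel2}--\eqref{KjXi rel2}. By the $k$--$x$ relations, conjugating $x_i^{\epsilon}(w)$ by $k_i^{\pm}(zq^{a})$ gives a factor $\frac{zq^{a}q_i-wq_i^{-1}}{zq^{a}-w}$ (with $z_{\pm\epsilon}$ shifts), and conjugating by $k_{i+1}^{\pm}(zq^{b})$ gives the reciprocal-type factor with $q_i\to q_i^{-1}$. I would split into three cases: $i<m$, where both factors are in the even block and $b=a+2$; $i>m$, where both are inverses in the odd block and $b=a-2$; and $i=m$, where $a=b=2m-2$, $k_{m+1}$ appears inverted, and \eqref{kiXm rel1} gives identical factors for $k_m$ and $k_{m+1}$. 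In each case the two rational factors should cancel exactly. This is the key computation, and the $i=m$ case is where the super sign conventions must be checked carefully.

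The main obstacle I expect is keeping track of the central-charge shifts $z_{\pm}$ when the signs of $B_q(L^{\pm})$ and of the other generator differ. In those mixed relations the arguments $z_{\pm\epsilon}$ shift, and one must confirm that the same shift applies to both $k_i$ and $k_{i+1}$ factors so that cancellation persists. If it does not cancel directly, I would fall back on the fusion approach of \cite{JLZ25}: realise $B_q$ via the $q$-antisymmetrizer/symmetrizer projector $A^{m|n}$ on $\mathcal{V}^{\otimes(m+n)}$. One then shows $A\,L_1\cdots L_{m+n}$ is scalar on the image and commutes with $L_0(w)$ via $R_{0,1}\cdots R_{0,m+n}$, whose product on the image is a scalar function.
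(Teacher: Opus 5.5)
Your reduction to the Gaussian generators is sound, and so is your second step. In the $x_i^{\epsilon}(w)$ computation, the factors from $k_i$ and $k_{i+1}$ carry the same shift $z_{\pm\epsilon}$ and cancel exactly in all three cases $i<m$, $i>m$, $i=m$. So the central-charge issue you worry about is harmless there.

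The gap is in the first step. For opposite signs the scalar factors do \emph{not} telescope to $1$. Take $\mathfrak{gl}(1|1)$, where $B_q(L^{+}(z))=k_1^{+}(z)k_2^{+}(z)^{-1}$. The factor $k_1^{+}(z)$ commutes with $k_1^{-}(w)$, since $1\leqslant m$. The lower-sign case of \eqref{kikj} with $i=2$, $j=1$ gives
\[
k_2^{+}(z)^{-1}k_1^{-}(w)=\frac{\Phi(w_{+}/z_{-})}{\Phi(w_{-}/z_{+})}\,k_1^{-}(w)\,k_2^{+}(z)^{-1},\qquad \Phi(x)=\frac{x-1}{xq-q^{-1}}.
\]
Nothing is left to cancel this ratio, and it differs from $1$ unless $q^{2c}=1$. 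The same identity follows directly from \eqref{RLL cros} by comparing extremal matrix entries after inverting $L_1^{+}(z)$, so it is not an artefact of the Drinfeld relations. Doing your bookkeeping in general with \eqref{kiki} and \eqref{kikj}, one finds for every $j$
\[
B_q(L^{+}(z))\,k_j^{-}(w)=\frac{T(z_{-}/w_{+})}{T(z_{+}/w_{-})}\,k_j^{-}(w)\,B_q(L^{+}(z)),\qquad T(y)=\frac{1-y}{1-yq^{2(m-n-1)}}.
\]
This factor is trivial only when $m=n+1$ or $q^{2c}=1$. Comparing the coefficient of $z^{-1}w^{1}$ then shows that the coefficient $b_1^{+}$ of $B_q(L^{+}(z))$ does not commute with $k^{-}_{1,-1}$ for generic $q^{c/2}$.

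Your fusion fallback hits the same obstruction. The fused $R$-matrix is indeed a scalar $F$ on the relevant subspace. But in \eqref{RLL cros} it is evaluated at $z_{+}/w_{-}$ on one side and at $z_{-}/w_{+}$ on the other, so you only get $B_q(L^{+}(z))L^{-}(w)=\frac{F(z_{-}/w_{+})}{F(z_{+}/w_{-})}L^{-}(w)B_q(L^{+}(z))$.

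Because the factor is independent of $j$, your argument does prove something weaker. $B_q(L^{\pm}(z))$ commutes with $L^{\pm}$ of the same sign, with all $e_i^{\pm}$ and $f_i^{\pm}$, and with the ratios $k_j^{\mp}(w)^{-1}k_{j+1}^{\mp}(w)$, hence with $U_q^{R}(\widehat{\mathfrak{sl}(m|n)})$. It only quasi-commutes with $L^{\mp}(w)$.

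Full centrality needs an input you do not have. One option is $q^{c/2}=1$, the loop case, where your argument goes through verbatim. The other is to rescale the $R$-matrix in \eqref{RLL cros} by a scalar function chosen so that the fused scalar becomes trivial. The paper gives no proof of its own and only cites \cite[Corollary 3.5]{JLZ25}, presumably under such a normalization. Its ``slight modification'' of the definition is exactly where this normalization could have been lost. With $R(z)$ exactly as in \eqref{R-matrix}, $q^{c/2}$ generic and $m\neq n+1$, the statement as printed is false. So the telescoping step cannot be closed, and you should flag the normalization rather than try to complete the computation.
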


Let $\mathfrak{C}_{m|n}$ denote the sub-superalgebra of $U_{q}^{R}(\widehat{\mathfrak{gl}(m|n)})$ generated by the coefficients of the quantum Berezinian. Then $\mathfrak{C}_{m|n}$ is a commutative algebra over $\mathbb{C}(q)$.
\begin{defi}
    The superalgebra $U_q^{R}({\widehat{\mathfrak{sl}(m|n)}})$ is defined to be the following sub-superalgebra of $U_{q}^{R}(\widehat{\mathfrak{gl}(m|n)})$: 
    $$U_q^{R}({\widehat{\mathfrak{sl}(m|n)}}):=\Big\{y\in U_{q}^{R}(\widehat{\mathfrak{gl}(m|n)})\Big|\mu_{g^{\pm}}(y)=y {\rm ~for~all~ } g^{\pm}\Big\},$$
    where the map $\mu_{g^{\pm}}$ is the automorphism of $U_{q}^{R}(\widehat{\mathfrak{gl}(m|n)})$ given by
    \begin{equation*}
        \mu_{g^{\pm}}:~~L^{+}(z)\mapsto g^{+}(z)L^{+}(z),\qquad
        L^{-}(z)\mapsto g^{-}(z)L^{-}(z),
    \end{equation*}
where 
$g^{\pm}(z)=g_0^{\pm}+g_1^{\pm}z^{\mp 1}+g_2^{\pm}z^{\mp 2}+\cdots\quad\in\mathbb{C}[[z^{\mp 1}]]$ 
with $g_0^+g_0^-=1$.
\end{defi}

\begin{lemm}\label{mug_action}
For any $1\leqslant i, j\leqslant m+n$ with $i<j$, we have
\begin{align}\label{ki}
\mu_{g^{\pm}}(k_{i}^{\pm}(z))&=g^{\pm}(z)k_{i}^{\pm}(z);\\ 
\mu_{g^{\pm}}(e_{ji}^{\pm}(z))&=e_{ji}^{\pm}(z);\\ \label{fij}
\mu_{g^{\pm}}(f_{ij}^{\pm}(z))&=f_{ij}^{\pm}(z).
\end{align}
\end{lemm}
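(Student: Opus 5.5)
The plan is to use the uniqueness of the Gauss decomposition \eqref{Gauss_dec}. Since $g^{\pm}(z)$ is a scalar power series (an even element of $\mathbb{C}[[z^{\mp1}]]$, commuting with everything), multiplying $L^{\pm}(z)$ by it amounts to multiplying the diagonal factor of the Gauss decomposition by $g^{\pm}(z)$.

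First I check that $\mu_{g^{\pm}}$ is a well-defined automorphism, as the definition implicitly asserts.
\begin{itemize}
\item The relations \eqref{RLL} and \eqref{RLL cros} are preserved: the scalar factors $g^{\pm}(z)g^{\pm}(w)$, respectively $g^{+}(z)g^{-}(w)$, appear on both sides and cancel, because they commute with $R$ and with $\theta$.
\item Relation \eqref{RLL-0} is preserved since scalar multiplication keeps the triangularity of the constant terms.
\item Relation \eqref{RLL-1} becomes $g_0^{-}g_0^{+}\,{l_{ii}^-}^{(0)}{l_{ii}^+}^{(0)}=1$, which holds by $g_0^+g_0^-=1$.
\item The inverse of $\mu_{g^{\pm}}$ is $\mu_{(g^{\pm})^{-1}}$; the series $g^{\pm}(z)$ is invertible because $g_0^{\pm}\neq 0$.
\end{itemize}

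Next, apply $\mu_{g^{\pm}}$ entrywise to \eqref{Gauss_dec}. Since $\mu_{g^{\pm}}$ is an algebra homomorphism, it maps the quasideterminant formulas to the quasideterminants of $g^{\pm}(z)L^{\pm}(z)$. One can see this in two equivalent ways:
\begin{itemize}
\item Directly: $\mu_{g^{\pm}}(L^{\pm}(z))=E^{\pm}(z)\,\bigl(g^{\pm}(z)D^{\pm}(z)\bigr)\,F^{\pm}(z)$, where $E,D,F$ are the lower unitriangular, diagonal and upper unitriangular factors. This is again a Gauss decomposition, so by uniqueness $\mu_{g^{\pm}}(k_i^{\pm})=g^{\pm}k_i^{\pm}$ and $e_{ji}^{\pm},f_{ij}^{\pm}$ are unchanged.
\item Via quasideterminants: the $(i,j)$ quasideterminant of $X^{ij}$-type blocks scales as $|gX|_{ij}=g\,|X|_{ij}$ when $g$ is a central invertible scalar. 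Indeed, $x_{ij}-r(X^{ij})^{-1}c$ becomes $gx_{ij}-gr\,g^{-1}(X^{ij})^{-1}\,gc=g\,|X|_{ij}$. Hence $k_i^{\pm}\mapsto g^{\pm}k_i^{\pm}$, and
\[
f_{ij}^{\pm}\mapsto (g^{\pm}k_i^{\pm})^{-1}\,g^{\pm}(\cdots)=f_{ij}^{\pm},
\]
and similarly $e_{ji}^{\pm}$ is fixed.
\end{itemize}

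The only point needing care is that $g^{\pm}$ commutes with all generators and is even, so no super-signs arise. There is no real obstacle here; the computation is routine.
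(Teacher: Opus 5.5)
Your argument is correct and is essentially the paper's. The paper applies $\mu_{g^{\pm}}$ to the entrywise Gauss relations \eqref{Gauss ii}--\eqref{Gauss ji}, reads off the case $i=1$ from $l_{11}^{\pm}=k_1^{\pm}$, $l_{1j}^{\pm}=k_1^{\pm}f_{1j}^{\pm}$, $l_{j1}^{\pm}=e_{j1}^{\pm}k_1^{\pm}$, and inducts on $i$; this is just the unrolled form of the uniqueness of the $LDU$ factorization, or equivalently of your homogeneity $|gX|_{ij}=g|X|_{ij}$ for central invertible $g$, which you invoke directly (your extra check that $\mu_{g^{\pm}}$ is a well-defined automorphism, taken for granted in the paper, is also correct).
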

\begin{proof}
By \eqref{Gauss ii}--\eqref{Gauss ji}, we deduce
$$
\mu_{g^{\pm}}(k_{1}^{\pm}(z))=g^{\pm}(z)k_{1}^{\pm}(z),\quad\mu_{g^{\pm}}(e_{j1}^{\pm}(z))
=e_{j1}^{\pm}(z), \quad\mu_{g^{\pm}}(f_{1j}^{\pm}(z))=f_{1j}^{\pm}(z).
$$
An inductive argument on $i$ and $j$, then establishes the general relations \eqref{ki}--\eqref{fij}.
\end{proof}

\begin{lemm}\label{fact 1}
Let $\mathcal{A}_q$ be a commutative associative algebra over ~$\mathbb{C}(q)$ and
$$
a^{\pm}(z) = 1+ a_{1}^{\pm}z^{\mp 1}+ a_{2}^{\pm}z^{\mp 2}+ \ldots \in \mathcal{A}_q[[z^{\mp}]].
$$
Then, for any positive integer $K$ and $t\in\mathbb{C}(q)$ not a root of unity, there exists a unique formal series
$$
\tilde{a}^{\pm}(z) = 1+ \tilde{a}_{1}^{\pm}z^{\mp 1}+ \tilde{a}_{2}^{\pm}z^{\mp 2}+ \ldots \in \mathcal{A}_q[[z^{\mp}]]
$$
such that
\begin{equation}\label{seriesexpansion}
 a^{\pm}(z) = \tilde{a}^{\pm}(z) \tilde{a}^{\pm}(zt) \cdots \tilde{a}^{\pm}(zt^{K-1}).
 \end{equation}
\end{lemm}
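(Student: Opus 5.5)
We solve for the coefficients of $\tilde{a}^{\pm}(z)$ recursively, degree by degree in $z^{\mp1}$. Write $\tilde{a}^{\pm}(z)=1+\sum_{p\geqslant1}\tilde{a}^{\pm}_p z^{\mp p}$. Then $\tilde{a}^{\pm}(zt^{k})=1+\sum_{p\geqslant1}\tilde{a}^{\pm}_p t^{\mp kp}z^{\mp p}$. Since $\mathcal{A}_q$ is commutative, the product on the right of \eqref{seriesexpansion} is well defined, and so is the coefficient comparison below. The product has constant term $1$, matching $a^{\pm}(z)$.

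Compare the coefficients of $z^{\mp p}$ for $p\geqslant1$. Among the terms contributing to this coefficient, exactly one kind involves $\tilde{a}^{\pm}_p$ linearly: pick $\tilde{a}^{\pm}_p$ from one factor and $1$ from all the others. This gives
\begin{equation*}
a^{\pm}_p=\Big(\sum_{k=0}^{K-1}t^{\mp kp}\Big)\tilde{a}^{\pm}_p+P_p\big(\tilde{a}^{\pm}_1,\ldots,\tilde{a}^{\pm}_{p-1}\big),
\end{equation*}
where $P_p$ is a polynomial with coefficients in $\mathbb{C}(q)$ (built from powers of $t$). It is determined by choosing, in the $K$ factors, indices $p_0,\ldots,p_{K-1}<p$ with $\sum p_k=p$.

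The key point, and the only real obstacle, is that the scalar $c_p:=\sum_{k=0}^{K-1}t^{\mp kp}$ must be invertible in $\mathbb{C}(q)$. Since $t$ is not a root of unity, $t^{\mp p}\neq1$, so
\begin{equation*}
c_p=\frac{1-t^{\mp Kp}}{1-t^{\mp p}}.
\end{equation*}
The numerator is nonzero because $t^{Kp}\neq1$. Hence $c_p\neq0$ in the field $\mathbb{C}(q)$, and $c_p$ is invertible.

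Therefore $\tilde{a}^{\pm}_p=c_p^{-1}\big(a^{\pm}_p-P_p(\tilde{a}^{\pm}_1,\ldots,\tilde{a}^{\pm}_{p-1})\big)$ is uniquely determined by induction on $p$. This gives existence: the recursively defined series satisfies \eqref{seriesexpansion} coefficientwise. It also gives uniqueness: any solution must have these coefficients. Alternatively, one could take logarithms, $\log a^{\pm}(z)=\sum_k\log\tilde{a}^{\pm}(zt^k)$, and divide the $p$-th coefficient by $c_p$; this is valid over $\mathbb{C}(q)\supset\mathbb{Q}$. Either route works, and I would present the recursive one.
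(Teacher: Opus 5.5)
Your proof is correct and follows the same route as the paper: compare the coefficients of $z^{\mp p}$, isolate the linear term $\bigl(\sum_{k=0}^{K-1}t^{\mp kp}\bigr)\tilde{a}^{\pm}_p$, and solve for $\tilde{a}^{\pm}_p$ recursively, which gives both existence and uniqueness. You also supply the step the paper leaves implicit, namely that this scalar is nonzero because $t$ is not a root of unity; this is exactly where that hypothesis is used.
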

\begin{proof}
Comparing the coefficient of $z^{\mp}$ in \eqref{seriesexpansion}, we find
$$
a_{1}^{\pm}=(1+t^{\mp 1}+\cdots+t^{\mp(K-1)})\tilde{a}_{1}^{\pm},{\rm ~that~is},\quad \tilde{a}_{1}^{\pm}=\frac{a_{1}^{\pm}}{1+t^{\mp1}+\cdots+t^{\mp(K-1)}}.
$$
For $s\geqslant 2$, comparing coefficients of $z^{\mp s}$ yields
$$
a_{s}^{\pm}=(1+t^{\mp s}+\cdots+t^{\mp s(K-1)})\tilde{a}_{s}^{\pm}+p_s(\tilde{a}_{1}^{\pm},\tilde{a}_{2}^{\pm},\cdots,
\tilde{a}_{s-1}^{\pm}),
$$
where $p_s(\tilde{a}_{1}^{\pm},\tilde{a}_{2}^{\pm},\cdots,\tilde{a}_{s-1}^{\pm})$ is a polynomial in $\tilde{a}_{1}^{\pm},\tilde{a}_{2}^{\pm},\cdots,\tilde{a}_{s-1}^{\pm}$. 
By induction, each $\tilde{a}_{s}^{\pm}$ is uniquely expressed as a polynomial in $a_{1}^{\pm},a_{2}^{\pm},\cdots,a_{s-1}^{\pm}$. The constant term is trivially consistent. This completes the proof.
\end{proof}

\begin{prop}
For $m\neq n$, we have
$$
U_q^{R}(\widehat{\mathfrak{gl}(m|n)})\cong \mathfrak{C}_{m|n}\otimes U_q^{R}({\widehat{\mathfrak{sl}(m|n)}}).
$$
\end{prop}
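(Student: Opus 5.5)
The approach follows the Yangian/quantum affine analogue (Molev, Frenkel--Ding style): use the automorphisms $\mu_{g^{\pm}}$ to split off the central Berezinian part. By Lemma \ref{mug_action}, $\mu_{g^{\pm}}$ rescales every $k_i^{\pm}(z)$ by $g^{\pm}(z)$ and fixes all $e_{ji}^{\pm}(z)$, $f_{ij}^{\pm}(z)$. Hence
\[
\mu_{g^{\pm}}\big(B_q(L^{\pm}(z))\big)=g^{\pm}(z)g^{\pm}(zq^2)\cdots g^{\pm}(zq^{2m-2})\,g^{\pm}(zq^{2m-2})^{-1}\cdots g^{\pm}(zq^{2m-2n})^{-1}\,B_q(L^{\pm}(z)).
\]
The scalar prefactor is a product of $m-n$ shifted copies of $g^{\pm}$ (after cancellation, up to shifts). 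Since $m\neq n$, it is a nontrivial ``$K$-fold product'' with $K=|m-n|$.

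\emph{Step 1: construct a normalized matrix.} The leading coefficient $B_q^{(0)}$ is a product of $k_{i,0}^{\pm}$, hence invertible and central. Set $b^{\pm}(z)=(B_q^{(0)})^{-1}B_q(L^{\pm}(z))\in\mathfrak{C}_{m|n}[[z^{\mp1}]]$, which has constant term $1$. Apply Lemma \ref{fact 1} over the commutative algebra $\mathcal{A}_q=\mathfrak{C}_{m|n}$, with $t=q^{\pm2}$ and $K=|m-n|$. This gives a unique series $\tilde b^{\pm}(z)$ with $b^{\pm}(z)$ equal to the product of $K$ shifted copies of $\tilde b^{\pm}$, arranged to match the shift pattern of the prefactor. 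The constant term needs separate treatment: $B_q^{(0)}=\prod_{i\le m}k_{i,0}\prod_{i>m}k_{i,0}^{-1}$. I would adjoin a $K$-th root of this element, or instead work with the subalgebra generated by the higher coefficients and treat $g_0^{\pm}$ separately. This is the first delicate point.

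Define $\widetilde L^{\pm}(z)=\tilde b^{\pm}(z)^{-1}L^{\pm}(z)$ with appropriate shift. Since $\tilde b^{\pm}$ is built from central elements and transforms under $\mu_{g^{\pm}}$ by exactly $g^{\pm}$ (by uniqueness in Lemma \ref{fact 1}), the coefficients of $\widetilde L^{\pm}$ are $\mu$-invariant. They therefore lie in $U_q^{R}(\widehat{\mathfrak{sl}(m|n)})$, and $B_q(\widetilde L^{\pm})=1$.

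\emph{Step 2: the multiplication map is an isomorphism.} The map $\mathfrak{C}_{m|n}\otimes U_q^{R}(\widehat{\mathfrak{sl}(m|n)})\to U_q^{R}(\widehat{\mathfrak{gl}(m|n)})$ given by $c\otimes y\mapsto cy$ is a superalgebra homomorphism because $\mathfrak{C}_{m|n}$ is central (Theorem \ref{center}) and even. It is surjective since $L^{\pm}(z)=\tilde b^{\pm}(z)\widetilde L^{\pm}(z)$.

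For injectivity, I would first show that $U_q^{R}(\widehat{\mathfrak{sl}(m|n)})$ is generated by the coefficients of $\widetilde L^{\pm}(z)$. Given a $\mu$-invariant $y$, write $y=\sum_\alpha c_\alpha y_\alpha$ with $c_\alpha$ monomials in the coefficients of $\tilde b^{\pm}$ and $y_\alpha$ in the $\widetilde L$-subalgebra. Then apply $\mu_{g^{\pm}}$ with generic $g$: the $c_\alpha$ transform by independent polynomial functions of the $g_r^{\pm}$, which forces $y$ to be in the $\widetilde L$-subalgebra.

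It then remains to show that the coefficients of $\tilde b^{\pm}$ are algebraically independent over $U_q^{R}(\widehat{\mathfrak{sl}(m|n)})$. I would use the Drinfeld side, via Theorem \ref{main theo} and the PBW basis of Theorem \ref{UD basis}. In terms of Drinfeld generators, $U^{R}(\widehat{\mathfrak{sl}})$ corresponds to the subalgebra generated by $\dot X_i^{\pm}$ and $\dot K_i^{\pm}$. The Berezinian coefficients, written via $K_j^{\pm}$, involve the combination $\sum_{i\le m}K_i-\sum_{i>m}K_i$ (in log form) transversally. When $m\neq n$, this combination is linearly independent from the $\dot K_i$ span, because the supertrace of the identity is $m-n\neq0$. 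A change of variables in the Cartan part of the PBW monomials then gives a tensor-product basis.

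\emph{Main obstacle.} I expect the main difficulty to be this independence argument. One must check that the logarithmic Cartan generators can be linearly recombined into the $\dot K$'s plus the Berezinian generator inside the PBW basis, and this is exactly where $m\neq n$ is used. A second, smaller issue is the degree-zero (group-like) part in Step 1.
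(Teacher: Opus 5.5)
There is a genuine gap, and it is exactly the point you flagged as ``delicate''. When $|m-n|\geq 2$ it cannot be repaired inside the given algebras.

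Lemma \ref{fact 1} only produces series with constant term $1$. So uniqueness gives $\mu_{g^{\pm}}(\tilde b^{\pm}(z))=(g_0^{\pm})^{-1}g^{\pm}(z)\,\tilde b^{\pm}(z)$, not $g^{\pm}(z)\tilde b^{\pm}(z)$. Consequently, for $\widetilde L^{\pm}(z)=\tilde b^{\pm}(z)^{-1}L^{\pm}(z)$ you get $\mu_{g^{\pm}}(\widetilde L^{\pm}(z))=g_0^{\pm}\widetilde L^{\pm}(z)$. The entries of $\widetilde L^{\pm}$ are therefore not in $U_q^{R}(\widehat{\mathfrak{sl}(m|n)})$, and the constant term of $B_q(\widetilde L^{\pm})$ is still $b_0^{\pm}$.

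To compensate you would need a central element on which $\mu_{g^{\pm}}$ acts by $g_0^{\pm}$, namely $(b_0^{\pm})^{1/(m-n)}$. Adjoining it changes both $\mathfrak{C}_{m|n}$ and the ambient algebra, so it proves a different statement. No separate treatment of $g_0^{\pm}$ can work either. Take $g^{\pm}(z)=t^{\pm1}$ constant:
\begin{itemize}
\item $U_q^{R}(\widehat{\mathfrak{gl}(m|n)})$ is $\mathbb{Z}$-graded by $\deg {l^{\pm}_{ij}}^{(r)}=\pm1$, since all defining relations are homogeneous;
\item $U_q^{R}(\widehat{\mathfrak{sl}(m|n)})$ lies in degree $0$;
\item $\mathfrak{C}_{m|n}$ lies in degrees $(m-n)\mathbb{Z}$, since each $k_i^{\pm}(z)$ has degree $\pm1$.
\end{itemize}
Since ${l^{+}_{11}}^{(0)}$ is invertible of degree $1$, the multiplication map $\mathfrak{C}_{m|n}\otimes U_q^{R}(\widehat{\mathfrak{sl}(m|n)})\to U_q^{R}(\widehat{\mathfrak{gl}(m|n)})$ is not surjective for $|m-n|\geq 2$, e.g.\ $(m,n)=(2,0)$ or $(3,1)$.

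So your surjectivity step fails. The decomposition via multiplication, which is the only one your argument can produce, holds only for $|m-n|=1$. In that case $(b_0^{\pm})^{1/(m-n)}=(b_0^{\pm})^{\pm1}\in\mathfrak{C}_{m|n}$, the series $(b_0^{\pm})^{1/(m-n)}\tilde b^{\pm}(z)$ transforms by exactly $g^{\pm}(z)$, and your construction goes through with it in place of $\tilde b^{\pm}$. Your own PBW check would expose the obstruction: in mode $0$, $b_0^{\pm}$ and the $\dot K_i$ generate a sublattice of index $|m-n|$ in the group of monomials in the $K_{j,0}^{\pm1}$, so your proposed change of Cartan variables is not invertible.

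The paper takes the same route: normalize by $b_0^{\pm}$, apply Lemma \ref{fact 1}, and use uniqueness to get the $(g_0^{\pm})^{-1}g^{\pm}$ law. It absorbs the leftover $g_0^{\pm}$ with the diagonal zero mode in $\widetilde{l_{ij}^{\pm}}(z)$, which must be $({l_{ii}^{\pm}}^{(0)})^{-1}$ for invariance. That mode is not central, so this does not give $U_q^{R}(\widehat{\mathfrak{gl}(m|n)})=\mathfrak{C}_{m|n}\cdot U_q^{R}(\widehat{\mathfrak{sl}(m|n)})$ either, for the same grading reason. For injectivity the paper only asserts that $\mathfrak{C}_{m|n}\cap U_q^{R}(\widehat{\mathfrak{sl}(m|n)})$ is trivial, which is not sufficient.

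Your injectivity plan is more serious, but it does not need the Drinfeld side. Take a relation $\sum_\alpha P_\alpha y_\alpha=0$, with $P_\alpha$ polynomials in the coefficients of $\tilde b^{\pm}$ and $(b_0^{\pm})^{\pm1}$ and $y_\alpha$ invariant. Apply $\mu_{g^{\pm}}$ with indeterminate coefficients, then specialize $(g_0^{\pm})^{-1}g^{\pm}(z)$ to $\tilde b^{\pm}(z)^{-1}h^{\pm}(z)$, where $h^{\pm}$ has formal central coefficients, as in Molev's argument for $Y(\mathfrak{gl}_N)$. This gives algebraic independence of the $\tilde b^{\pm}_r$ over $U_q^{R}(\widehat{\mathfrak{sl}(m|n)})$. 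Powers of $b_0^{\pm}$ are separated by their distinct characters $(g_0^{\pm})^{k(m-n)}$, which is where $m\neq n$ is used. Injectivity is therefore fine; surjectivity is the real obstruction.
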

\begin{proof}
The quantum Berezinian can be written in a formal power series
$$
B_q(L^{\pm}(z))=b_0^{\pm}+b_1^{\pm}z^{\pm 1}+b_2^{\pm}z^{\pm 2}+\cdots.
$$
By the definition of $B_q(L^{\pm}(z))$, the constant term
$$
b_0^{\pm}={l_{11}^{\pm}}^{(0)}{l_{22}^{\pm}}^{(0)}\cdots{l_{mm}^{\pm}}^{(0)}\left(
{l_{m+1,m+1}^{\pm}}^{(0)}\right)^{-1}\cdots\left({l_{m+n,m+n}^{\pm}}^{(0)}\right)^{-1}
$$
are invertible. Set $a^{\pm}(z)=\left(b_0^{\pm}\right)^{-1}B_q(L^{\pm}(z))$.

Assume that $m>n$. By lemma \ref{fact 1}, there exists $\tilde{a}^{\pm}(z)\in 1+z^{-1}\mathfrak{C}_{m|n}[[z^{-1}]]$ such that
$$
a^{\pm}(z)=\tilde{a}^{\pm}(z)\tilde{a}^{\pm}(zq^2)\cdots\tilde{a}^{\pm}(zq^{2m-2n-2}).
$$
Since $\mu_{g^{\pm}}\left({l_{ii}^{\pm}}^{(0)}\right)=g_0^{\pm}{l_{ii}^{\pm}}^{(0)}$ for $i=1,2,\cdots,m+n$, the action of $\mu_{g^{\pm}}$ on $a^{\pm}(z)$ is given by
\begin{gather*}
    \mu_{g^{\pm}}\left(a^{\pm}(z)\right)=\left(g_0^{\pm}\right)^{n-m}g^{\pm}(z)g^{\pm}(zq^{2})\cdots g^{\pm}(zq^{2m-2n-2})a^{\pm}(z).
\end{gather*}
The uniqueness of the expression of $\mu_{g^{\pm}}\left(a^{\pm}(z)\right)$ forces
$$
\mu_{g^{\pm}}\left(\tilde{a}^{\pm}(z)\right)=\left(g_0^{\pm}\right)^{-1}g^{\pm}(z)\tilde{a}^{\pm}(z).
$$

Now we define $\widetilde{l_{ij}^{\pm}}(z)=\left(\tilde{a}^{\pm}(z)\right)^{-1}{l_{ii}^{\pm}}^{(0)}l_{ij}^{\pm}(z)$. 
Then $\mu_{f^{\pm}}\left(\widetilde{l_{ij}^{\pm}}(z)\right)=\widetilde{l_{ij}^{\pm}}(z)$, so, $\widetilde{l_{ij}^{\pm}}(z)\in U_q^{R}({\widehat{\mathfrak{sl}(m|n)}})$. This implies that
$$
U_q^{R}({\widehat{\mathfrak{gl}(m|n)}})=\mathfrak{C}_{m|n}\cdot U_q^{R}({\widehat{\mathfrak{sl}(m|n)}}).
$$
Moreover,
$$
\mathfrak{C}_{m|n}\cap U_q^{R}({\widehat{\mathfrak{sl}(m|n)}})=\varnothing,
$$
so
$$
U_q^{R}({\widehat{\mathfrak{gl}(m|n)}})\cong \mathfrak{C}_{m|n}\otimes U_q^{R}({\widehat{\mathfrak{sl}(m|n)}}).
$$

The case $m<n$ is similar.
\end{proof}

The following lemma presents the $R$-matrix presentation of $U_{q}(\widehat{\mathfrak{sl}(m|n))}$.
\begin{lemm}\label{SL generators}
For any $m,n\geqslant 0$, the coefficients of the series
\begin{equation}\label{SL}
{k_{i}^{\pm}(z)}^{-1}k_{i+1}^{\pm}(z),\quad e_{i}^{\pm}(z),\quad f_{i}^{\pm}(z), \quad {\rm for~}1\leqslant i\leqslant m+n-1,
\end{equation}
along with the central element $q^{\frac{c}{2}}$ generate the subalgebra $U_q^{R}({\widehat{\mathfrak{sl}(m|n)}})$.
\end{lemm}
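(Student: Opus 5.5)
We prove two inclusions. Let $\mathcal{S}$ be the subalgebra generated by $q^{\frac{c}{2}}$ and the coefficients of the series in \eqref{SL}.

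\emph{First inclusion, $\mathcal{S}\subseteq U_q^{R}(\widehat{\mathfrak{sl}(m|n)})$.} By Lemma \ref{mug_action}, $\mu_{g^{\pm}}$ multiplies every $k_i^{\pm}(z)$ by the same scalar series $g^{\pm}(z)$. This series is central, so it cancels in ${k_{i}^{\pm}(z)}^{-1}k_{i+1}^{\pm}(z)$. The series $e_i^{\pm}(z)$ and $f_i^{\pm}(z)$ are fixed outright by Lemma \ref{mug_action}. The element $q^{\frac c2}$ is a scalar, so it is also fixed. Since $\mu_{g^{\pm}}$ is an algebra automorphism, every element of $\mathcal{S}$ is fixed.

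\emph{Reverse inclusion, $U_q^{R}(\widehat{\mathfrak{sl}(m|n)})\subseteq\mathcal{S}$.} I would pass to Drinfeld generators via Theorem \ref{main theo}, which gives $U_q^{R}(\widehat{\mathfrak{gl}(m|n)})\cong U_q^{D}(\widehat{\mathfrak{gl}(m|n)})$, and then use the PBW basis of Theorem \ref{UD basis}.
\begin{itemize}
\item Each $X_{\pm\alpha,r}$ is a $q$-commutator of $X_{i,k}^{\pm}$. Their images are built from $e_i^{\pm},f_i^{\pm}$, so they lie in $\mathcal{S}$ and are $\mu_g$-invariant.
\item Hence a basis element $X^-_{\mathbf m}K_{\mathbf h}q^{rc/2}X^+_{\mathbf m'}$ is acted on by $\mu_g$ only through its Cartan part $K_{\mathbf h}$.
\item Change variables in the Cartan part. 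Use the ratios $k_i^{-1}k_{i+1}$ for $1\le i\le N-1$, together with the single series $k_1^{\pm}(z)$. Via the triangular relation, the coefficients of these series generate the same commutative algebra as the $K_{j,\pm r}^{\pm}$.
\item After this change, the algebra is (a Laurent extension of) a polynomial algebra in the ratio coefficients, tensored with the coefficients of $k_1^{\pm}(z)$.
\end{itemize}

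Take an invariant $y$ and write $y=\sum_t c_t\, P_t$. Here the $c_t$ lie in $\mathcal{S}$: they are ordered products of root vectors and ratio coefficients. The $P_t$ are distinct monomials in the coefficients of $k_1^{\pm}$. Applying $\mu_g$ sends $k_1^{\pm}(z)\mapsto g^{\pm}(z)k_1^{\pm}(z)$ and fixes all $c_t$. Because $g^{\pm}$ ranges over all series with $g_0^+g_0^-=1$, we may take for instance
\[
g^{+}(z)=1+\lambda z^{-r}, \qquad g^{-}=1 .
\]
Invariance for all such $\lambda$ forces $y$ to involve no monomial of positive degree in $k_1$-coefficients. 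It also rules out nonzero powers of $k_{1,0}$; to see this, take $g_0^{\pm}=\lambda^{\pm1}$ and compare powers of $\lambda$. Thus $y$ lies in the span of the $c_t$, i.e. $y\in\mathcal{S}$.

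\emph{Main obstacle.} The delicate step is the triangularity and polynomial-independence argument. We need that, in the basis of Theorem \ref{UD basis} after the Cartan change of variables, the $k_1$-monomials are linearly independent over $\mathcal{S}$. We also need that the action of $\mu_g$ on them is "generic enough" that the coefficient extraction in $\lambda$ is legitimate. Formally, $\mu_g$ sends each $k_{1,\pm r}^{\pm}$ to $k_{1,\pm r}^{\pm}$ plus lower-order terms. A filtration by total $k_1$-degree then shows that invariance forces the top-degree part to vanish unless the degree is $0$.

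Finally, Lemma \ref{fact 1} and the preceding proposition serve as a consistency check when $m\neq n$: there $U^R_q(\widehat{\mathfrak{gl}})=\mathfrak C_{m|n}\otimes U^R_q(\widehat{\mathfrak{sl}})$. The argument above avoids that decomposition, so it covers $m=n$ as well.
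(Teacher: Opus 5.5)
Your proof is correct in outline. It uses the same one-parameter trick as the paper but a different PBW basis for the key independence step.

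\textbf{The paper's route.} It stays inside the $R$-matrix presentation. It writes an invariant $P$ as $\sum_a(\text{monomial in } k_{1,\pm r}^{\pm})\,K_aE_aF_a$, with $K_a,E_a,F_a$ built from the fixed generators \eqref{SL}. It applies $\mu_{g^{\pm}}$ with $g^{\pm}(z)=1+\lambda z^{\mp M}$. It then appeals to linear independence of the RTT monomials \eqref{base:Uq} together with $k_{1,r}^{\pm}={l_{11}^{\pm}}^{(r)}$.

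\textbf{Your route.} You transport everything through $\varphi$ to the Drinfeld PBW basis of Theorem \ref{UD basis}. There $\mu_{g^{\pm}}$ acts only on the Cartan letters, and after your triangular change of variables the $k_1$-coefficients are genuine PBW letters. In the paper's version, the fixed generators $k_i^{-1}k_{i+1}$, $e_i$, $f_i$ themselves expand in \eqref{base:Uq} with ${l_{11}^{\pm}}^{(r)}$-letters, so separating the $k_1$-dependence is not immediate. You also treat $k_{1,0}$ properly via $g_0^{\pm}=\lambda^{\pm1}$. The paper's choice $g^{\pm}=1+\lambda z^{\mp M}$ violates $g_0^+g_0^-=1$ when $M=0$. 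The cost is reliance on Theorems \ref{main theo} and \ref{UD basis}, which is legitimate since both precede the lemma.

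\textbf{Two points need fixing.}

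\emph{The Cartan part is not commutative.} Since $q^{c/2}$ is not specialized, relations \eqref{kiki} and \eqref{kikj} show that $K^{+}$- and $K^{-}$-coefficients do not commute in general. So your claim that the Cartan letters generate a single commutative (Laurent) polynomial algebra is false as stated. Perform the change of variables separately in the commutative families $\{K_{j,r}^{+}\}_{r>0}$ and $\{K_{j,-r}^{-}\}_{r>0}$, over the Laurent ring in the $K_{j,0}$ (which commute with all Cartan generators). Keep the ordered form $K^{-}K^{0}K^{+}$ of Theorem \ref{UD basis}.

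\emph{Do not push the $k_1$-monomials to the right.} Writing $y=\sum_t c_tP_t$ with $P_t$ on the right requires commuting $k_{1,r}^{+}$ past the $X^{+}$-letters. Leave the $k_1$-letters in their PBW slots instead. Take $g^{+}=1+\lambda z^{-M}$ and $g^{-}=1$, with $M$ the top mode of $k_1^{+}$ occurring. The only affected letter is then $k_{1,M}^{+}\mapsto k_{1,M}^{+}+\lambda K_{1,0}$. The extra $K_{1,0}$ commutes with the $K^{+}$ block, so it can be moved into the $K^{0}$ slot. Distinct basis monomials therefore go to distinct basis monomials. Hence the $\lambda^{1}$-coefficient of $\mu_{g^{\pm}}(y)-y$ vanishes by Theorem \ref{UD basis} alone. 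The filtration argument you flag as the main obstacle is not needed.
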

\begin{proof}
The algebra $U_q^{R}({\widehat{\mathfrak{gl}(m|n)}})$ is generated by the coefficients of the series $\{k_{i}^{\pm}(z),e_j^{\pm}(z), f_{j}^{\pm}(z)\mid1\leqslant i\leqslant m+n; 1\leqslant j\leqslant m+n-1\}$, so the coefficients of the series $k_{1}^{\pm}(z)$ together with those in \eqref{SL} also generate the algebra $U_q^{R}({\widehat{\mathfrak{gl}(m|n)}})$. Note that, for any $g^{\pm}$, the automorphism $\mu_{g^{\pm}}$ fixes all generators in (\ref{SL}) and $\mu_{g^{\pm}}(k_{1}^{\pm}(z))=g^{\pm}(z)k_{1}^{\pm}(z)$. 

By Proposition \ref{base} and \eqref{Gauss ii}-\eqref{Gauss ji}, any element $P\in U_q^{R}({\widehat{\mathfrak{gl}(m|n)}})$ is a polynomial in $k_{1,0}^{\pm}$, $k_{1,\pm 1}^{\pm}$, $k_{1,\pm 2}^{\pm}, \cdots$ and the other generators fixed by all $\mu_{g^{\pm}}$. We may assume the monomials in $P$ are ordered with $k_{i,\pm r}^{\pm}$ preceding ${e_{i}^{\pm}}^{(r)}$, which precede ${f_{i}^{\pm}}^{(r)}$. Suppose $P\in U_q^{R}({\widehat{\mathfrak{sl}(m|n)}})$, and $M$ be the maximal index $r$ such that $k_{1,\pm r}^{\pm}$ appears in $P$, and $K$ the highest power of any such $k_{1,\pm r}^{\pm}$. One can write:
\begin{align*}
P=\sum_{a}(k_{1,0}^{+})^{a_0^+}(k_{1,0}^{-})^{a_0^-}(k_{1,1}^{+})^{a_1^+}(k_{1,-1}^{-})^{a_1^-}\cdots
(k_{1,M}^{+})^{a_M^+}(k_{1,-M}^{-})^{a_M^-}K_aE_aF_a~,
\end{align*}
where $E_a$, $K_a$, $F_a$ are monomials in the generators fixed by $\mu_{g^{\pm}}$, and the sum is over all $2M+2$-tuples $a=(a_0^+,a_0^-,a_1^{+},a_1^{-},\cdots,a_M^{+},a_M^-)$ with $0\leqslant a_{i}^{\pm}\leqslant K$. Fix $g^{\pm}=1+\lambda z^{\mp M}$ with $\lambda\in\mathbb{C}^{*}$. Then we have
\begin{align*}
\mu_{g^{\pm}}(P)=\sum_{a}(k_{1,0}^{+})^{a_0^+}
(k_{1,0}^{-})^{a_0^-}(k_{1,1}^{+})^{a_1^+}(k_{1,-1}^{-})^{a_1^-}\cdots(\lambda k_{1,0}^{+}+k_{1,M}^{+})^{a_M^+}(\lambda k_{1,0}^{-}+k_{1,-M}^{-})^{a_M^-}K_aE_aF_a=P~.
\end{align*}
On the one hand, each monomial $K_aE_aF_a$ can be written as the linear combination of the monomials in \eqref{base:Uq}. On the other hand, by \eqref{Gauss ii}, $k_{1,r}^{\pm}={l_{11}^{\pm}}^{(r)}$. Due to the linear independence of the different monomials in \eqref{base:Uq} and the arbitrariness of $\lambda$, it follows that $k_{1,M}^{\pm}$ cannot appear in $P$. This implies the claim.
\end{proof}

We proceed to prove the isomorphism between the $R$-matrix and Drinfeld presentations for quantum affine superalgebra $U_q(\widehat{\mathfrak{sl}(m|n)})$.
\begin{prop}
The subalgebra $U_q^{R}({\widehat{\mathfrak{sl}(m|n)}})$ is isomorphic to the superalgebra $U_q^{D}({\widehat{\mathfrak{sl}(m|n)}})$.
\end{prop}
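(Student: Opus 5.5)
The plan is to restrict the isomorphism $\varphi$ of Theorem \ref{main theo} to the subalgebra $U_q^{D}(\widehat{\mathfrak{sl}(m|n)})\subset U_q^{D}(\widehat{\mathfrak{gl}(m|n)})$. By Section \ref{se:Drinfeldpresentation}, this subalgebra is generated by $q^{\pm\frac{c}{2}}$, $\dot K_i=K_{i+1,0}K_{i,0}^{-1}$ and the coefficients of $\dot X_i^{\pm}(z)$ and $\dot K_i^{\pm}(z)$, $i\in I'$. Since $\varphi$ is injective on all of $U_q^{D}(\widehat{\mathfrak{gl}(m|n)})$, its restriction is injective. It therefore suffices to show that $\varphi\bigl(U_q^{D}(\widehat{\mathfrak{sl}(m|n)})\bigr)=U_q^{R}(\widehat{\mathfrak{sl}(m|n)})$.

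For the inclusion $\subseteq$, compute the images of the generators. We have $\varphi(\dot K_i^{\pm}(z))=k_{i+1}^{\pm}(zq^{\nu_i})k_i^{\pm}(zq^{\nu_i})^{-1}$. Since the $k_j^{\pm}$ commute by \eqref{RLL2.8} and its $N$-analogue, this equals ${k_i^{\pm}(w)}^{-1}k_{i+1}^{\pm}(w)$ with $w=zq^{\nu_i}$. Likewise $\varphi(\dot X_i^{+}(z))$ is a scalar multiple of $e_i^{+}(w_-)-e_i^{-}(w_+)$, and $\varphi(\dot X_i^{-}(z))$ is a scalar multiple of $f_i^{+}(w_+)-f_i^{-}(w_-)$. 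Shifting $z$ by the invertible factor $q^{\nu_i}$ (or $q^{\pm c/2}$) only rescales coefficients by powers of $q$ and $q^{c/2}$. Hence every coefficient lies in the algebra generated by the coefficients in \eqref{SL} together with $q^{\pm\frac c2}$. That algebra is $U_q^{R}(\widehat{\mathfrak{sl}(m|n)})$ by Lemma \ref{SL generators}.

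For the reverse inclusion $\supseteq$, again use Lemma \ref{SL generators}: it suffices that each coefficient of ${k_i^{\pm}(z)}^{-1}k_{i+1}^{\pm}(z)$, $e_i^{\pm}(z)$, $f_i^{\pm}(z)$ lies in the image. For the Cartan part this is immediate from the computation above, after substituting $z\mapsto zq^{-\nu_i}$. For $e_i^{\pm}$, split $x_i^{+}(z)=e_i^{+}(z_-)-e_i^{-}(z_+)$ into modes. The $z^{-r}$ coefficient with $r>0$ involves ${e_i^{+}}^{(r)}$, possibly minus contributions of $e_i^{-}$, since $e_i^{-}$ has only nonnegative powers of $z$. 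The $z^{r}$ coefficient with $r\ge 0$ gives ${e_i^{-}}^{(r)}$. Comparing with the Drinfeld mode expansion of $X_{i,k}^{+}$ for $k>0$ and $k\le 0$, each single mode ${e_i^{\pm}}^{(r)}$ is a $\mathbb{C}(q,q^{c/2})$-multiple of $\varphi(X^{+}_{i,\mp r})$. Since $\dot X_{i,k}^{\pm}$ is a scalar multiple of $X_{i,k}^{\pm}$, the coefficients of $e_i^{\pm}$ lie in $\varphi\bigl(U_q^{D}(\widehat{\mathfrak{sl}(m|n)})\bigr)$. The same argument with $f_i^{\pm}$ and $x_i^{-}$ handles the $f$-part. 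Finally, $\dot K_i$ maps to ${k_{i,0}^{\pm}}^{-1}k_{i+1,0}^{\pm}$, and $q^{\pm\frac c2}$ maps to itself.

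The main obstacle I expect is the mode bookkeeping in the reverse inclusion: checking that the $z_\pm$ shifts do not mix ${e_i^{+}}^{(r)}$ and ${e_i^{-}}^{(r)}$ in the same power of $z$. This holds because $e_i^{+}(z)\in z^{-1}U[[z^{-1}]]$ and $e_i^{-}(z)\in U[[z]]$ have disjoint supports, and the shifts act diagonally on powers of $z$. A secondary subtlety arises when $m=n$: the identification of $U_q^{D}(\widehat{\mathfrak{sl}(m|n)})$ with the subalgebra of $U_q^{D}(\widehat{\mathfrak{gl}(m|n)})$ is taken as given in Section \ref{se:Drinfeldpresentation}, following \cite{BCFK22,LYZ24}. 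Lemma \ref{SL generators} is stated for all $m,n$, so no tensor decomposition (which required $m\neq n$) is needed in this argument.
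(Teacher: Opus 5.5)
Your proposal is correct and follows the same route as the paper: restrict the isomorphism $\varphi$ of Theorem~\ref{main theo} to the Drinfeld generators of $U_q^{D}(\widehat{\mathfrak{sl}(m|n)})$ and use Lemma~\ref{SL generators} to identify the image, and you also make explicit two steps the paper leaves implicit, namely injectivity by restriction and the mode bookkeeping via the disjoint supports of $e_i^{+}$, $e_i^{-}$ and of $f_i^{+}$, $f_i^{-}$. One harmless index slip: from $x_i^{+}(z)=e_i^{+}(z_-)-e_i^{-}(z_+)$, the mode ${e_i^{\pm}}^{(r)}$ is a multiple of $\varphi(X^{+}_{i,\pm r})$, not of $\varphi(X^{+}_{i,\mp r})$, and this does not affect the argument.
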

\begin{proof}
By restricting the isomorphism $\varphi$ to the generators of $U_q^{D}({\widehat{\mathfrak{sl}(m|n)}})$, we have
$$
\begin{aligned}
\varphi\left(\dot{K}_{i}^{\pm}(z)\right)&={k_{i}^{\pm}(zq^{\nu_i})}^{-1}k_{i+1}^{\pm}(zq^{\nu_i}),\\
\varphi\left(\dot{X}_{i}^{+}(z)\right)&=(q-q^{-1})^{-1}(e_i^{+}(z_{-}q^{\nu_i})-e_{i}^{-}(z_{+}q^{\nu_i})),\\
\varphi\left(\dot{X}_{i}^{-}(z)\right)&=(-1)^{|\alpha_i|}(q^{-1}-q)^{-1}(f_{i}^{+}(z_{+}q^{\nu_i})-
f_{i}^{-}(z_{-}q^{\nu_i})),
\end{aligned}
$$
for $1\leq i\leq m+n-1$. By Lemma \ref{SL generators}, 
the coefficients of these images generate the algebra  $U_q^{R}({\widehat{\mathfrak{sl}(m|n)}})$.
It follows that $U_q^{R}({\widehat{\mathfrak{sl}(m|n)}})\cong U_q^{D}({\widehat{\mathfrak{sl}(m|n)}})$.
\end{proof}

In the case $m=n$, the coefficients $b_i^{\pm} (i\geqslant 0)$ of the quantum Berezinian lie in the center of $U_q^{R}({\widehat{\mathfrak{sl}(n|n)}})$. We now introduce the quantum affine superalgebra associated with the classical Lie superalgebra $\boldsymbol{A}(n-1,n-1)$ as the quotient algebra:
$$
U_q^{R}({\widehat{\mathfrak{psl}(n|n)}}):=U_q^{R}({\widehat{\mathfrak{sl}(n|n)}})/
\langle B_q(L^{\pm}(z))=1\rangle=U_q^{R}({\widehat{\mathfrak{sl}(n|n)}})/\mathfrak{C}_{n|n}.
$$

\appendix

\section{Appendix: Commutation relations between Gaussian generators}
\subsection{Case $m=1,n=1$}\label{appe:mn11}

From \eqref{RLL2.1}--\eqref{RLL2.7}, the following relations can be obtained:
\begin{align}\label{RLL2.8}
 k_i^{ \pm}(z) k_i^{ \pm}(w)&=k_i^{ \pm}(w) k_i^{ \pm}(z), \quad i=1,2, \tag{A.1}
\\ \label{RLL2.9}
k_1^{+}(z) k_1^{-}(w)&=k_1^{-}(w) k_1^{+}(z), \tag{A.2}
\\ \label{RLL2.10}
\frac{w_{-} q- z_{+}q^{-1}}{z_{+} q-w_{-} q^{-1}} k_2^{+}(z) k_2^{-}(w)
&=\frac{w_{+} q- z_{-}q^{-1}}{z_{-} q-w_{+} q^{-1}} k_2^{-}(w) k_2^{+}(z),  \tag{A.3}
\\ \notag
k_1^{ \pm}(z) k_2^{ \pm}(w)&=k_2^{ \pm}(w) k_1^{ \pm}(z), 
\\ \notag
\frac{z_{ \pm}-w_{\mp}}{z_{ \pm} q-w_{\mp} q^{-1}} k_2^{\mp}(w)^{-1} k_1^{ \pm}(z)
&=k_1^{ \pm}(z) k_2^{\mp}(w)^{-1} \frac{z_{\mp}-w_{ \pm}}{z_{\mp} q-w_{ \pm}q^{-1}}.
\end{align}
Thus, all the relations between $k_{1}^{\pm}(z)$, $k_{2}^{\pm}(z)$ have been obtained.

From \eqref{RLL2.5}--\eqref{RLL2.7} and \eqref{RLL2.8}--\eqref{RLL2.10}, we can also get
\begin{align}\label{RLL2.14}
k_{1}^{\pm}(z)^{-1}f_{1}^{\pm}(w)k_{1}^{\pm}(z)&=\frac{zq-wq^{-1}}{z-w}f_{1}^{\pm}(w)+
\frac{w(q-q^{-1})}{w-z}f_{1}^{\pm}(z), \tag{A.5}
\\ \label{RLL2.15}
k_{1}^{\pm}(z)^{-1}f_{1}^{\mp}(w)k_{1}^{\pm}(z)&=\frac{z_{\mp}q-w_{\pm}q^{-1}}{z_{\mp}
-w_{\pm}}f_{1}^{\mp}(w)+\frac{w_{\pm}(q-q^{-1})}{w_{\pm}-z_{\mp}}f_{1}^{\pm}(z), \tag{A.6}
\\
k_{1}^{\pm}(z)e_{1}^{\pm}(w)k_{1}^{\pm}(z)^{-1}&=\frac{zq-wq^{-1}}{z-w}e_{1}^{\pm}(w)-
\frac{z(q-q^{-1})}{w-z}e_{1}^{\pm}(z), \tag{A.7}
\\ \label{RLL2.17}
k_{1}^{\pm}(z)e_{1}^{\mp}(w)k_{1}^{\pm}(z)^{-1}&=\frac{z_{\pm}q-w_{\mp}q^{-1}}{z_{\pm}
-w_{\mp}}e_{1}^{\mp}(w)-\frac{z_{\pm}(q-q^{-1})}{w_{\mp}-z_{\pm}}e_{1}^{\pm}(z). \tag{A.8}
\end{align}
Recall the definition of $X_{1}^{\pm}(z)$; with the help of \eqref{RLL2.14}--\eqref{RLL2.17}, we get
\begin{align}
k_{1}^{\pm}(z)^{-1}x_{1}^{-}(w)k_{1}^{\pm}(z)&=\frac{z_{\mp}q-wq^{-1}}{z_{\mp}-w}x_{1}^{-}(w), \tag{A.9}
\\
k_{1}^{\pm}(z)x_{1}^{+}(w)k_{1}^{\pm}(z)^{-1}&=\frac{z_{\pm}q-wq^{-1}}{z_{\pm}-w}x_{1}^{+}(w). \tag{A.10}
\end{align}

We can also find the following relations from \eqref{RLL2.1} and \eqref{RLL2.2}
\begin{equation}\tag{A.11}\label{RLL2.18}
k_1^{\pm}(z)f_1^{\pm}(z)k_1^{\pm}(w)f_1^{\pm}(w)=-\frac{zq^{-1}-wq}{zq-wq^{-1}}
k_1^{\pm}(w)f_1^{\pm}(w)k_1^{\pm}(z)f_1^{\pm}(z),
\end{equation}
\begin{equation}\tag{A.12} \label{RLL2.19}
k_1^{\pm}(z)k_1^{\pm}(w)f_1^{\pm}(w)=\frac{w(q-q^{-1})}{zq-wq^{-1}}k_1^{\pm}(w)k_1^{\pm}(z)
f_1^{\pm}(z)+\frac{z-w}{zq-wq^{-1}}k_1^{\pm}(w)f_1^{\pm}(w)k_1^{\pm}(z),
\end{equation}
\begin{equation}\tag{A.13}\label{RLL2.20}
k_1^{\mp}(z)f_1^{\mp}(z)k_1^{\pm}(w)f_1^{\pm}(w)=-\frac{z_{\pm}q^{-1}-w_{\mp}q}{z_{\pm}q-w_{\mp}q^{-1}}
k_1^{\pm}(w)f_1^{\pm}(w)k_1^{\mp}(z)f_1^{\mp}(z),
\end{equation}
\begin{equation}\tag{A.14}\label{RLL2.21}
k_1^{\mp}(z)k_1^{\pm}(w)f_1^{\pm}(w)=\frac{w_{\mp}(q-q^{-1})}{z_{\pm}q-w_{\mp}q^{-1}}
k_1^{\pm}(w)k_1^{\mp}(z)f_1^{\mp}(z)+\frac{z_{\pm}-w_{\mp}}{z_{\pm}q-w_{\mp}q^{-1}}
k_1^{\pm}(w)f_1^{\pm}(w)k_1^{\mp}(z).
\end{equation}
Using \eqref{RLL2.14} and \eqref{RLL2.15}, we have
\begin{align*}
\frac{zq^{-1}-wq}{z-w}f_1^{\pm}(z)f_1^{\pm}(w)-\frac{z(q-q^{-1})}{z-w}f_1^{\pm}(w)
f_1^{\pm}(w)&=\frac{wq-zq^{-1}}{z-w}f_1^{\pm}(w)f_1^{\pm}(z)\\
&+\frac{w(q-q^{-1})}{w-z}
\frac{wq-zq^{-1}}{zq-wq^{-1}}f_1^{\pm}(z)f_1^{\pm}(z),\\
\frac{z_{\mp}q^{-1}-w_{\pm}q}{z_{\mp}-w_{\pm}}f_1^{\pm}(z)f_1^{\mp}(w)-
\frac{z_{\mp}(q-q^{-1})}{z_{\mp}-w_{\pm}}f_1^{\mp}(w)f_1^{\mp}(w)&=
\frac{w_{\pm}q-z_{\mp}q^{-1}}{z_{\mp}-w_{\pm}}f_1^{\mp}(w)f_1^{\pm}(z)\\
&+\frac{w_{\pm}(q-q^{-1})}{w_{\pm}-z_{\mp}}\frac{w_{\pm}q-z_{\mp}q^{-1}}
{z_{\mp}q-w_{\pm}q^{-1}}f_1^{\pm}(z)f_1^{\pm}(z).
\end{align*}

Similarly, we can also get relations between $e_1^{\pm}(z)$ and $e_1^{\pm}(w)$:
\begin{align*}
\frac{zq^{-1}-wq}{z-w}e_1^{\pm}(w)e_1^{\pm}(z)+\frac{w(q-q^{-1})}{z-w}e_1^{\pm}(w)
e_1^{\pm}(w)&=\frac{wq-zq^{-1}}{z-w}e_1^{\pm}(z)e_1^{\pm}(w)\\
&-\frac{wq-zq^{-1}}{zq-wq^{-1}}\frac{z(q-q^{-1})}{w-z}e_1^{\pm}(z)e_1^{\pm}(z),\\
\frac{z_{\pm}q^{-1}-w_{\mp}q}{z_{\pm}-w_{\mp}}e_1^{\mp}(w)e_1^{\pm}(z)
+\frac{w_{\mp}(q-q^{-1})}{z_{\pm}-w_{\mp}}e_1^{\mp}(w)e_1^{\mp}(w)&=
\frac{w_{\mp}q-z_{\pm}q^{-1}}{z_{\mp}-w_{\mp}}e_1^{\pm}(z)e_1^{\mp}(w)\\
&-\frac{w_{\mp}q-z_{\pm}q^{-1}}{z_{\pm}q-w_{mp}q^{-1}}\frac{z_{\pm}(q-q^{-1})}
{w_{\mp}-z_{\pm}}e_1^{\pm}(z)e_1^{\pm}(z).
\end{align*}

Then, we can derive
\begin{align}\tag{A.15}
x_1^{\pm}(z)x_1^{\pm}(w)=-x_1^{\pm}(w)x_1^{\pm}(z).
\end{align}

From \eqref{RLL2.5}--\eqref{RLL2.7} and (\ref{RLL2.8}), we have
\begin{align*}
{k_2^{\pm}(w)}^{-1}f_1^{\pm}(z)k_2^{\pm}(w)=-\frac{z(q-q^{-1})}{z-w}f_1^{\pm}(w)
+\frac{zq^{-1}-wq}{z-w}f_1^{\pm}(z),\\
{k_2^{\mp}(w)}^{-1}f_1^{\pm}(z)k_2^{\mp}(w)=-\frac{z_{\mp}(q-q^{-1})}{z_{\mp}-w_{\pm}}
f_1^{\mp}(w)+\frac{z_{\mp}q^{-1}-w_{\pm}q}{z_{\mp}-w_{\pm}}f_1^{\pm}(z).
\end{align*}
Then, we obtain that
\begin{equation}\tag{A.16}
k_{2}^{\pm}(w)^{-1}x_{1}^{-}(z)k_{2}^{\pm}(w)=\frac{w_{\mp}q-zq^{-1}}{w_{\mp}-z}x_{1}^{-}(z).
\end{equation}

Same-type relations holds also for $e_{1}^{\pm}(z)$:
\begin{align*}
k_2^{\pm}(w)e_1^{\pm}(z){k_2^{\pm}(w)}^{-1}=\frac{w(q-q^{-1})}{z-w}e_1^{\pm}(w)
+\frac{zq^{-1}-wq}{z-w}e_1^{\pm}(z),\\
k_2^{\mp}(w)e_1^{\pm}(z){k_2^{\mp}(w)}^{-1}=\frac{w_{\mp}(q-q^{-1})}{z_{\pm}-w_{\mp}}
e_1^{\mp}(w)+\frac{z_{\pm}q^{-1}-w_{\mp}q}{z_{\pm}-w_{\mp}}e_1^{\pm}(z).
\end{align*}
Then, we have
\begin{equation}\tag{A.17}
k_{2}^{\pm}(w)x_{1}^{+}(z)k_{2}^{\pm}(w)^{-1}=\frac{w_{\pm}q-zq^{-1}}{w_{\pm}-z}x_{1}^{+}(z).
\end{equation}

From \eqref{RLL2.1} and \eqref{RLL2.2}, we get
\begin{align*}
&-\frac{z(q-q^{-1})}{zq-wq^{-1}}\left(k_2^{\pm}(z)+e_1^{\pm}(z)k_1^{\pm}(z)f_1^{\pm}(z)
\right)k_1^{\pm}(w)+\frac{z-w}{zq-wq^{-1}}k_1^{\pm}(z)f_1^{\pm}(z)e_1^{\pm}(w)k_1^{\pm}(w)\\
=&\frac{z-w}{zq-wq^{-1}}e_1^{\pm}(w)k_1^{\pm}(w)k_1^{\pm}(z)f_1^{\pm}(z)-
\frac{z(q-q^{-1})}{zq-wq^{-1}}\left(k_2^{\pm}(w)+e_1^{\pm}(w)k_1^{\pm}(w)f_1^{\pm}(w)
\right)k_1^{\pm}(z),\\
&-\frac{z_{\mp}(q-q^{-1})}{z_{\mp}q-w_{\pm}q^{-1}}\left(k_2^{\mp}(z)+e_1^{\mp}(z)
k_1^{\mp}(z)f_1^{\mp}(z)\right)k_1^{\pm}(w)+\frac{z_{\mp}-w_{\pm}}{z_{\mp}q-w_{\pm}q^{-1}}
k_1^{\mp}(z)f_1^{\mp}(z)e_1^{\pm}(w)k_1^{\pm}(w)\\
=&\frac{z_{\pm}-w_{\mp}}{z_{\pm}q-w_{\mp}q^{-1}}e_1^{\pm}(w)k_1^{\pm}(w)k_1^{\mp}(z)
f_1^{\mp}(z)-\frac{z_{\pm}(q-q^{-1})}{z_{\pm}q-w_{\mp}q^{-1}}\left(k_2^{\pm}(w)+
e_1^{\pm}(w)k_1^{\pm}(w)f_1^{\pm}(w)\right)k_1^{\mp}(z),\\
&k_1^{\pm}(z)f_1^{\pm}(z)k_1^{\pm}(w)=\frac{z-w}{zq-wq^{-1}}k_1^{\pm}(w)k_1^{\pm}(z)
f_1^{\pm}(z)-\frac{z(q-q^{-1})}{zq-wq^{-1}}k_1^{\pm}(w)f_1^{\pm}(w)k_1^{\pm}(z),\\
&k_1^{\mp}(z)f_1^{\mp}(z)k_1^{\pm}(w)=\frac{z_{\pm}-w_{\mp}}{z_{\pm}q-w_{\mp}q^{-1}}
k_1^{\pm}(w)k_1^{\mp}(z)f_1^{\mp}(z)-\frac{z_{\pm}(q-q^{-1})}{z_{\pm}q-w_{\mp}q^{-1}}
k_1^{\pm}(w)f_1^{\pm}(w)k_1^{\mp}(z).
\end{align*}
Then we can deduce that
\begin{align*}
\left[e_1^{\pm}(w),f_1^{\pm}(z)\right]=\frac{z(q-q^{-1})}{z-w}k_2^{\pm}(z){k_1^{\pm}(z)}^{-1}-\frac{z(q-q^{-1})}{z-w}
k_2^{\pm}(w){k_1^{\pm}(w)}^{-1},\\
\left[e_1^{\pm}(w),f_1^{\mp}(z)\right]=\frac{z_{\mp}(q-q^{-1})}{z_{\mp}-w_{\pm}}k_2^{\mp}(z){k_1^{\mp}(z)}^{-1}-
\frac{z_{\pm}(q-q^{-1})}{z_{\pm}-w_{\mp}}k_2^{\pm}(w){k_1^{\pm}(w)}^{-1}.
\end{align*}
Furthermore,
\begin{align*}
\left[e_1^{+}(w_-),f_1^{+}(z_+)\right]&=(q-q^{-1})\left(\sum_{i=0}^{\infty}(wz^{-1}q^{-c})^{i}
k_{2}^{+}(z_+){k_1^{+}(z_+)}^{-1}+\sum_{i=1}^{\infty}(zw^{-1}q^{c})^{i}
k_{2}^{+}(w_-){k_1^{+}(w_-)}^{-1}\right),\\
\left[e_1^{-}(w_+),f_1^{-}(z_-)\right]&=(q-q^{-1})\left(-\sum_{i=1}^{\infty}(zw^{-1}q^{-c})^{i}
k_{2}^{-}(z_-){k_1^{-}(z_-)}^{-1}-\sum_{i=0}^{\infty}(wz^{-1}q^{c})^{i}
k_{2}^{-}(w_+){k_1^{-}(w_+)}^{-1}\right),\\
\left[e_1^{+}(w_-),f_1^{-}(z_-)\right]&=(q-q^{-1})\left(-\sum_{i=1}^{\infty}(zw^{-1}q^{-c})^{i}
k_{2}^{-}(z_-){k_1^{-}(z_-)}^{-1}+\sum_{i=1}^{\infty}(zw^{-1}q^{c})^{i}
k_{2}^{+}(w_-){k_1^{+}(w_-)}^{-1}\right),\\
\left[e_1^{-}(w_+),f_1^{+}(z_+)\right]&=(q-q^{-1})\left(-\sum_{i=1}^{\infty}(zw^{-1}q^{c})^{i}
k_{2}^{+}(z_+){k_1^{+}(z_+)}^{-1}+\sum_{i=1}^{\infty}(zw^{-1}q^{-c})^{i}
k_{2}^{-}(w_+){k_1^{-}(w_+)}^{-1}\right).
\end{align*}
So,
\begin{equation}\tag{A.18}
\begin{aligned}
\left[x_1^{+}(w),x_1^{-}(z)\right]&=\left[e_1^{+}(w_-),f_1^{+}(z_+)\right]+\left[e_1^{-}
(w_+),f_1^{-}(z_-)\right]-\left[e_1^{+}(w_-),f_1^{-}(z_-)\right]-\left[e_1^{-}(w_+),
f_1^{+}(z_+)\right]\\
&=(q-q^{-1})\left(\delta\left(wz^{-1}q^{-c}\right)k_2^{+}(z_+){k_1^{+}(z_+)}^{-1}-
\delta\left(zw^{-1}q^{-c}\right)k_{2}^{-}(w_{+}){k_{1}^{-}(w_{+})}^{-1}\right).
\end{aligned}
\end{equation}

\subsection{Computation of \eqref{ser1 m=2} and \eqref{ser2 m=1}} \label{serre 12}

For the Case $m=1,n=2$, we have
\begin{align*}
(z-w)x_1^{+}(z) x_2^{+}(w)&=\left(z q^{-1}-w q\right) x_2^{+}(w) x_1^{+}(z),\\ 
\left(z q^{-1}-w q\right) x_1^{-}(z) x_2^{-}(w)&=(z-w) x_2^{-}(w) x_1^{-}(z), \\ 
(wq^{-1}-zq)x_{2}^{-}(z)x_{2}^{-}(w)&=(wq-zq^{-1})x_{2}^{-}(w)x_{2}^{-}(z),\\ 
(wq-zq^{-1})x_{2}^{+}(z)x_{2}^{+}(w)&=(wq^{-1}-zq)x_{2}^{+}(w)x_{2}^{+}(z).
\end{align*}
Then,
\begin{align*}
x_2^{+}(z_1)x_1^{+}(w)x_2^{+}(z_2)&=\frac{wq^{-1}-z_2q}{w-z_2}x_2^{+}(z_1)x_2^{+}(z_2)
x_1^{+}(w),\\
x_1^{+}(w)x_2^{+}(z_1)x_2^{+}(z_2)&=\frac{wq^{-1}-z_1q}{w-z_1}\frac{wq^{-1}-z_2q}{w-z_2}
x_2^{+}(z_1)x_2^{+}(z_2)x_1^{+}(w),\\
x_2^{+}(z_2)x_1^{+}(w)x_2^{+}(z_1)&=\frac{wq^{-1}-z_1q}{w-z_1}\frac{z_1q^{-1}-z_2q}
{z_1q-z_2q^{-1}}x_2^{+}(z_1)x_2^{+}(z_2)x_1^{+}(w),\\
x_2^{+}(z_2)x_2^{+}(z_1)x_1^{+}(w)&=\frac{z_1q^{-1}-z_2q}{z_1q-z_2q^{-1}}x_2^{+}(z_1)
x_2^{+}(z_2)x_1^{+}(w),\\
x_1^{+}(w)x_2^{+}(z_2)x_2^{+}(z_1)&=\frac{wq^{-1}-z_2q}{w-z_2}\frac{wq^{-1}-z_1q}{w-z_1}
\frac{z_1q^{-1}-z_2q}{z_1q-z_2q^{-1}}x_2^{+}(z_1)x_2^{+}(z_2)x_1^{+}(w),\\
x_2^{-}(z_1)x_1^{-}(w)x_2^{-}(z_2)&=\frac{w-z_2}{wq^{-1}-z_2q}x_2^{-}(z_1)x_2^{-}(z_2)
x_1^{-}(w),\\
x_1^{-}(w)x_2^{-}(z_1)x_2^{-}(z_2)&=\frac{w-z_1}{wq^{-1}-z_1q}\frac{w-z_2}{wq^{-1}-z_2q}
x_2^{-}(z_1)x_2^{-}(z_2)x_1^{-}(w),\\
x_2^{-}(z_2)x_1^{-}(w)x_2^{-}(z_1)&=\frac{w-z_1}{wq^{-1}-z_1q}\frac{z_1q-z_2q^{-1}}
{z_1q^{-1}-z_2q}x_2^{-}(z_1)x_2^{-}(z_2)x_1^{-}(w),\\
x_2^{-}(z_2)x_2^{-}(z_1)x_1^{+}(w)&=\frac{z_1q-z_2q^{-1}}{z_1q^{-1}-z_2q}x_2^{-}(z_1)
x_2^{-}(z_2)x_1^{-}(w),\\
x_1^{-}(w)x_2^{-}(z_2)x_2^{-}(z_1)&=\frac{z_1q-z_2q^{-1}}{z_1q^{-1}-z_2q}\frac{w-z_1}
{wq^{-1}-z_1q}\frac{w-z_2}{wq^{-1}-z_2q}x_2^{-}(z_1)x_2^{-}(z_2)x_1^{-}(w).
\end{align*}
In \eqref{ser2 m=1}, the coefficient of the term $x_2^{+}(z_1)x_2^{+}(z_2)x_1^{+}(w)$ becomes
\begin{align*}
&1-(q+q^{-1})\frac{wq^{-1}-z_2q}{w-z_2}+\frac{wq^{-1}-z_1q}{w-z_1}\frac{wq^{-1}-z_2q}{w-z_2}
+\frac{z_1q^{-1}-z_2q}{z_1q-z_2q^{-1}}\\
&-(q+q^{-1})\frac{wq^{-1}-z_1q}{w-z_1}\frac{z_1q^{-1}-z_2q}
{z_1q-z_2q^{-1}}+\frac{wq^{-1}-z_2q}{w-z_2}\frac{wq^{-1}-z_1q}{w-z_1}
\frac{z_1q^{-1}-z_2q}{z_1q-z_2q^{-1}}=0,
\end{align*}
and the coefficient of the term $x_2^{-}(z_1)x_2^{-}(z_2)x_1^{-}(w)$ becomes
\begin{align*}
&1-(q+q^{-1})\frac{w-z_2}{wq^{-1}-z_2q}+\frac{w-z_1}{wq^{-1}-z_1q}\frac{w-z_2}{wq^{-1}-z_2q}
+\frac{z_1q-z_2q^{-1}}{z_1q^{-1}-z_2q}\\
&-(q+q^{-1})\frac{w-z_1}{wq^{-1}-z_1q}\frac{z_1q-z_2q^{-1}}+\frac{z_1q-z_2q^{-1}}
{z_1q^{-1}-z_2q}\frac{w-z_1}{wq^{-1}-z_1q}\frac{w-z_2}{wq^{-1}-z_2q}=0.
\end{align*}
Therefore, \eqref{ser2 m=1} holds.
Similarly, we can verify  \eqref{ser1 m=2} holds.

\section*{Acknowledgments}
H. Lin is supported by the Postdoctoral Fellowship Program of CPSF (GZC20252014). H. Zhang is supported by the support of the National Natural Science Foundation of China (No. 12271332).

\end{document}